\newtheorem{theorem}{Theorem}[section]
\newtheorem{lemma}[theorem]{Lemma}
\newtheorem{observation}[theorem]{Observation}
\newtheorem{corollary}[theorem]{Corollary}
\def\+{{}^+\!}
\def\eg{\widehat{g}}
\def\egp{\widehat{g}\+}
\def\dc#1#2{\Delta_{#1}(#2)}
\def\Gxy{{\mathcal G}_{xy}}
\def\Gcxy{{\mathcal G}^\circ_{xy}}
\def\Cc{\C^\circ}
\def\hat{\widehat}
\def\dd{d^*}
\def\homeo{\hookrightarrow}
\def\gs#1{(\ref{fg-separation}#1)}
\def\gsel#1{(\ref{fg-selected}#1)}
\def\gcas#1{(\ref{fg-cascades-2}#1)}
\def\gbase#1{(\ref{fg-bases}#1)}
\def\By{B_y}
\def\Bx{B_x}
\def\hem{\eta}
\def\df#1{{\emph{#1}}}
\def\B{\mathcal{B}}
\def\C{\mathcal{C}}
\def\E{\mathcal{E}}
\def\M{\mathcal{M}}
\def\P{\mathcal{P}}
\def\S{\mathcal{S}}
\def\RR{\mathbb{R}}
\def\SS{\mathbb{S}}
\newcounter{cases}
\newcounter{subcases}[cases]
\newenvironment{casesblock}{%
\setcounter{cases}{0}\setcounter{subcases}{0}\par}{}
\long\def\case#1{\stepcounter{cases}\medskip\noindent{\bf Case \arabic{cases}: }#1. \par}
\def\ss{\subseteq}
\def\sm{\setminus}
\def\into{\to}
\def\iso{\cong} 
\def\eg{\widehat{g}}
\begin{document}

\begin{frontmatter}

\title{Excluded minors for the Klein Bottle II. Cascades}

\author[SFU]{Bojan Mohar\fnref{mohar,mohar2}}
\ead{mohar@sfu.ca}

\author[SFU]{Petr \v{S}koda}

\address[SFU]{Simon Fraser University\\
 Department of Mathematics\\
 8888 University Drive\\
 Burnaby, BC, Canada\\}

\fntext[mohar]{Supported in part by an NSERC Discovery Grant R611450 (Canada),
   by the Canada Research Chair program, and by the
   Research Grant J1-8130 of ARRS (Slovenia).}
\fntext[mohar2]{On leave from:
    IMFM \& FMF, Department of Mathematics, University of Ljubljana, Ljubljana,
    Slovenia.}

\begin{abstract}
Graphs that are critical (minimal excluded minors) for embeddability in surfaces are studied. In Part I, it was shown that graphs that are critical for embeddings into surfaces of Euler genus $k$ or for embeddings into nonorientable surface of genus $k$ are built from 3-connected components, called hoppers and cascades. In Part II, all cascades for Euler genus 2 are classified. As a consequence, the complete list of obstructions of connectivity 2 for embedding graphs into the Klein bottle is obtained.
\end{abstract}

\begin{keyword}
Excluded minor, graphs on surfaces, graph embedding, genus, Euler genus, Klein bottle.
\end{keyword}

\end{frontmatter}



\section{Introduction}

We refer to Part I \cite{MS2014_I} for the introduction and the motivation for the results of this paper.
In \cite{MS2014_I} it was shown that the 3-connected components of graphs that are critical for embeddings in surfaces of bounded Euler genus include the class of graphs that are termed as cascades.  In Part II, we classify all cascades for Euler genus 2, thus in particular obtaining all graphs of connectivity 2 that are critical for embeddings into the Klein bottle.  The proofs become seriously complicated, and this explains why there is so little known about the minimal excluded minors for the Klein bottle.

In order for this paper to be self-contained, we repeat some of the basic definitions from Part I.  If $S$ is a surface, its \emph{Euler genus} is the integer $\eg(S) = 2-\chi(S)$, where $\chi(.)$ denotes the Euler characteristic.
For a graph $G$, we denote by $\eg(G)$ its \emph{Euler genus}, which is the minimum Euler genus of a surface in which $G$ can be embedded.
A graph $G$ is \df{critical} for Euler genus $k$ if $\eg(G) > k$ and for each edge $e \in E(G)$, $\eg(G-e) \le k $ and $\eg(G / e) \le k$.
Let $\E_k$ be the class of critical graphs for Euler genus $k$ and $\E = \bigcup_{k\ge 0} \E_k$.
It is easy to show that graphs in $\E$ that are not 2-connected can be obtained as disjoint unions and 1-sums of graphs in $\E$ (see~\cite{stahl-1977}).
In \cite{MS2014_I}, we classified all graphs in $\E$ of \df{connectivity} 2, that is, those graphs that are 2-connected but not 3-connected.
We proved that each critical graph for Euler genus, whose connectivity is 2, can be obtained as a 2-sum of graphs that are close to graphs in $\E$ or belong to two exceptional classes of graphs, called cascades and hoppers.

An analogous result for the orientable surface of genus 1 (Euler genus 2) is given in~\cite{mohar-torus}. However, the methods used in that paper are quite different from those in this one. The main difference is the appearance of cascades, which we classify in this paper.
The proofs use methods from structural graph theory and involve development of results about extensions of embeddings of subgraphs.

\section{Preliminaries}
\label{sc-preliminaries}

We shall use standard terminology and notation and we refer to \cite{MS2014_I} for details. Here we include just the main notions and notation used throughout the paper.

An \df{embedding} of a connected graph $G$ is a pair $\Pi = (\pi, \lambda)$
where $\pi = (\pi_v \mid v \in V(G))$ is a \df{rotation system}, which assigns each vertex
$v$ a cyclic permutation $\pi_v$ of the edges incident with $v$ (called the \df{local rotation} at $v$), and $\lambda$ is a \df{signature}
mapping which assigns each edge $e \in E(G)$ a \df{sign} $\lambda(e) \in \{-1, 1\}$.
Given an embedding $\Pi$ of $G$, we say that $G$ is \df{$\Pi$-embedded}.

Every embedding determines the set $F(G,\Pi)$ of closed walks (called \emph{$\Pi$-facial walks} or simply \emph{$\Pi$-faces}) traversing each edge twice that correspond to boundaries of faces of a topological embedding determined by the embedding. See \cite{mohar-book} for more details. The \df{Euler genus} $\eg(\Pi)$ of an embedding $\Pi$ is given by Euler's formula:
$$\eg(\Pi) = 2 - |V(G)| + |E(G)| - |F(G,\Pi)|.$$
The \df{Euler genus} $\eg(G)$ of a graph $G$ is the minimum Euler genus over all embeddings $\Pi$ of $G$.

In this paper, we deal mainly with the class of simple graphs.
Let $G$ be a simple graph and $e\in E(G)$. Then $G-e$ denotes the graph obtained from $G$ by \df{deleting} $e$ and $G/e$ denotes the graph obtained from $G$ by \df{contracting} $e$ (and replacing any resulting multiple edges by single edges). Recall that we use the set $\M(G) = E(G) \times \{-,/\}$ of \df{minor-operations} available for $G$; if $\mu \in \M(G)$ we denote by $\mu G$ the graph obtained
from $G$ by applying $\mu$. For example, if $\mu = (e, -)$ then $\mu G  = G - e$.

We will need the following well-known result.

\begin{theorem}[Stahl and Beineke~\cite{stahl-1977}]
\label{th-stahl-euler}
The Euler genus of a graph is the sum of the Euler genera of its blocks.
\end{theorem}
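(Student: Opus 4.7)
The plan is to reduce to a single cut vertex and then verify additivity via Euler's formula in two directions: a constructive upper bound and a face-counting lower bound.

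First, I would induct on the number of blocks of $G$. If $G$ is 2-connected it is its own unique block and there is nothing to prove. Otherwise, the block-tree of $G$ has a leaf block $B$ meeting the rest of $G$ in a single cut vertex $v$. Setting $G_1 = B$ and letting $G_2$ be the union of the remaining blocks gives $G = G_1 \cup G_2$ with $V(G_1) \cap V(G_2) = \{v\}$, $E(G_1) \cap E(G_2) = \emptyset$, and $G_2$ has strictly fewer blocks than $G$. By the inductive hypothesis it therefore suffices to prove, under these hypotheses, that $\eg(G) = \eg(G_1) + \eg(G_2)$.

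For $\eg(G) \leq \eg(G_1) + \eg(G_2)$, I would take optimal embeddings $\Pi_1$ and $\Pi_2$ of $G_1$ and $G_2$ and glue them into an embedding $\Pi$ of $G$. At each vertex different from $v$ I inherit the rotation and signature from $\Pi_1$ or $\Pi_2$; at $v$ I concatenate the cyclic rotations $\pi_v^1$ and $\pi_v^2$ so that the $G_1$-edges form one arc and the $G_2$-edges form another. Tracing faces, one face of $\Pi_1$ incident to $v$ and one face of $\Pi_2$ incident to $v$ get spliced into a single face of $\Pi$ at the interface, and all other faces are unchanged, so $|F(G,\Pi)| = |F(G_1,\Pi_1)| + |F(G_2,\Pi_2)| - 1$. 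Combined with $|V(G)| = |V(G_1)| + |V(G_2)| - 1$ and $|E(G)| = |E(G_1)| + |E(G_2)|$, Euler's formula yields $\eg(\Pi) = \eg(\Pi_1) + \eg(\Pi_2) = \eg(G_1) + \eg(G_2)$.

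For the reverse inequality, I would take any optimal embedding $\Pi$ of $G$ and restrict it to embeddings $\Pi_i$ of $G_i$, using the same rotations and signatures everywhere except at $v$, where the rotation of $\Pi_i$ is the cyclic subsequence of $\pi_v$ consisting of the $G_i$-edges. The same vertex/edge counts together with Euler's formula for each of the three embeddings reduce the claim to the combinatorial inequality $|F(G,\Pi)| \leq |F(G_1,\Pi_1)| + |F(G_2,\Pi_2)| - 1$. Since $v$ is a cut vertex, any face walk of $\Pi$ that leaves $v$ along a $G_1$-edge must return to $v$ along a $G_1$-edge before it can use a $G_2$-edge, so each $\Pi$-face partitions at $v$ into monochromatic arcs, and the bipartite incidence linking a $\Pi$-face to those $\Pi_i$-faces containing its darts gives a map that identifies each $\Pi$-face with a nonempty set of $\Pi_i$-faces. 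Because $v$ is incident to both $G_1$-edges and $G_2$-edges, $\pi_v$ has at least one "cross" angle between them, and the $\Pi$-face at such an angle uses darts of both blocks, forcing at least one $\Pi_1$-face and one $\Pi_2$-face to be merged into a common $\Pi$-face; this yields the required $-1$. The main obstacle is this last step: in a generic $\Pi$ the rotation $\pi_v$ may alternate between $G_1$- and $G_2$-edges many times, so several $\Pi$-faces can simultaneously absorb pieces of several $\Pi_i$-faces, and the bipartite incidence has to be analyzed carefully to verify that the number of $\Pi$-faces never exceeds $|F(\Pi_1)| + |F(\Pi_2)| - 1$.
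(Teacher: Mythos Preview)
The paper does not prove this theorem at all; it is quoted as a classical result of Stahl and Beineke~\cite{stahl-1977} and used as a black box. So there is no ``paper's own proof'' to compare against, and your proposal should be judged on its own merits.

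Your overall architecture is the standard one and is sound: reduce by induction on the block tree to a single cut vertex $v$ with $G=G_1\cup G_2$ and $V(G_1)\cap V(G_2)=\{v\}$, then prove $\eg(G)=\eg(G_1)+\eg(G_2)$ by two inequalities. The upper bound (gluing optimal $\Pi_1,\Pi_2$ at $v$ with the two blocks occupying consecutive arcs of the local rotation) is correct as written: exactly one $\Pi_1$-face and one $\Pi_2$-face merge, and Euler's formula gives $\eg(\Pi)=\eg(\Pi_1)+\eg(\Pi_2)$.

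The lower bound is where the work is, and you have honestly flagged the gap rather than papering over it. The inequality you need, $|F(G,\Pi)|\le |F(G_1,\Pi_1)|+|F(G_2,\Pi_2)|-1$ for the \emph{restricted} embeddings $\Pi_i$, is true, but the sentence ``at least one cross angle forces at least one merge, yielding the required $-1$'' is not yet a proof: when $\pi_v$ alternates $m>1$ times between $G_1$- and $G_2$-edges, several $\Pi$-faces can each absorb pieces of several $\Pi_i$-faces, and you must show the net deficit is still at least~$1$. One clean way to finish is this. Call a $\Pi$-face \emph{mixed} if it uses a mixed angle at $v$, and a $\Pi_i$-face \emph{jumping} if it uses a $\Pi_i$-angle at $v$ that is not a $\Pi$-angle. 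Pure $\Pi$-faces coincide bijectively with non-jumping $\Pi_i$-faces, so the inequality reduces to $q\le r_1+r_2-1$, where $q$ is the number of mixed $\Pi$-faces and $r_i$ the number of jumping $\Pi_i$-faces. Form the bipartite multigraph whose left vertices are the mixed $\Pi$-faces and whose right vertices are the jumping $\Pi_1$- and $\Pi_2$-faces, adding one edge for each maximal $G_i$-segment of a mixed $\Pi$-face (joining that $\Pi$-face to the jumping $\Pi_i$-face containing the segment). Every mixed $\Pi$-face has degree at least~$2$ in this graph (it has at least one $G_1$-segment and one $G_2$-segment), while each component is connected; counting edges versus vertices in each component then gives $q\le r_1+r_2-(\text{number of components})\le r_1+r_2-1$. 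Alternatively, the original Stahl--Beineke paper and standard references (e.g.\ Mohar--Thomassen, \emph{Graphs on Surfaces}) carry out this step in full; consulting one of those would let you close the argument rigorously.
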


Finally, let us note that the neighbors of a vertex of degree 3 cannot be adjacent in a graph that is minimally non-embeddable on a surface.

\begin{observation}
\label{obs-triangle}
  Let $uvw$ be a triangle in a graph $G$. If $u$ has degree 3, then
  every embedding of $G - vw$ into a surface can be extended to an embedding of $G$ into the same surface.
\end{observation}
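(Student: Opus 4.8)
The plan is to locate a single face of the given embedding $\Pi$ of $G-vw$ through which the missing edge $vw$ can be routed ``alongside the path $v,u,w$'', and then to check that inserting it there changes neither the Euler genus nor the ambient surface. The whole argument hinges on one structural observation, and this is the only place the degree hypothesis is used.

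First I would note that since $\deg_G(u)=3$ and $vw\notin E(G-vw)$, the vertex $u$ still has degree $3$ in $G-vw$, with incident edges $uv$, $uw$, and $ux$, where $x$ is the third neighbour of $u$. In any cyclic permutation of three elements every pair is consecutive, so $uv$ and $uw$ are consecutive in the local rotation $\pi_u$; hence there is a $\Pi$-facial walk $W$ that uses the corner of $u$ between $uv$ and $uw$, i.e. $W$ traverses the edge $vu$ immediately followed by $uw$. (Note $u,v,w$ are pairwise distinct, since $uvw$ is a triangle, so the path $v,u,w$ is simple.) Now I would insert $vw$ into $\Pi$ by placing it in $\pi_v$ next to $uv$ and in $\pi_w$ next to $uw$, on the sides bordering $W$, with signature chosen so that the triangle $uvw$ becomes a facial cycle of the resulting embedding $\Pi'$; topologically this is nothing but drawing $vw$ inside the disk $W$ parallel to $v,u,w$, thereby cutting off a triangular face bounded by $uv$, $uw$, $vw$.

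It then remains to do the bookkeeping. Passing from $\Pi$ to $\Pi'$ leaves $|V(G)|$ unchanged, increases $|E(G)|$ by one, and replaces the face $W$ by two faces, namely $W'$ (the walk $W$ with the detour $v,u,w$ shortcut to the single edge $vw$) together with the new triangle, so the face count also increases by one; by Euler's formula $\eg(\Pi')=\eg(\Pi)$. Moreover the only new generator added to the cycle space of $G-vw$ is the triangle $uvw$, which is a face of $\Pi'$ and therefore bounds a disk, so $\Pi'$ is orientable if and only if $\Pi$ is; hence $\Pi'$ embeds $G$ into the same surface that carried $\Pi$. The step that genuinely uses the hypothesis, and the only thing that could fail for a vertex of degree greater than $3$, is the claim that $uv$ and $uw$ share a face corner: with a further edge incident to $u$, that edge could separate $uv$ from $uw$ in $\pi_u$ and no such face need exist.
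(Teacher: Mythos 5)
Your proof is correct, and it is the natural argument; the paper states this as an unproved Observation, so there is no written proof to compare against, but the key point you isolate — that a cyclic order on three elements makes every pair consecutive, hence $uv$ and $uw$ share a corner at $u$ and therefore a common face into which $vw$ can be inserted — is exactly the reason the degree-$3$ hypothesis is needed. The Euler-count and orientability bookkeeping at the end is standard and fine as stated.
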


\section{Graphs with terminals}
\label{sc-terminals}

We study the class $\Gxy$ of graphs with two special vertices $x$ and $y$, called \df{terminals}.
Most notions that are used for graphs can be used in the same way for graphs with terminals.
However, some notions differ, and to distinguish between graphs with and without terminals, we use $\hat{G}$ to denote the underlying graph of $G$ without terminals (for $G \in \Gxy$).
Two graphs, $G_1$ and $G_2$, in $\Gxy$ are \df{isomorphic}, also denoted $G_1 \iso G_2$, if there is an isomorphism of the graphs $\hat{G_1}$ and $\hat{G_2}$
that maps terminals of $G_1$ onto terminals of $G_2$ (and non-terminals onto non-terminals), possibly exchanging $x$ and $y$.
We define minor-operations on graphs in $\Gxy$ in the way that $\Gxy$ is a minor-closed class.
When performing edge contractions on $G \in \Gxy$, we do not allow contraction of the edge $xy$ (if $xy \in E(G)$)
and when contracting an edge incident with a terminal, the resulting vertex becomes a terminal.

We use $\M(G)$ to denote the set of available minor-operations for $G$.
Since $(xy, /) \not\in \M(G)$ for $G \in \Gxy$,
we shall use $G /xy$ to denote the underlying simple graph obtained from $G$
by identification of $x$ and $y$. In particular, we do not require the edge $xy$ to be present in~$G$.

A \df{graph parameter} is a function that assigns a value to every graph that is constant on each isomorphism class. Similarly, we call a function $\Gxy \into \RR$ a \df{graph parameter} if it is constant on each isomorphism class of $\Gxy$.
A graph parameter $\P$ is \df{minor-monotone} if $\P(H) \le \P(G)$ for each graph $G \in \Gxy$ and each minor $H$ of $G$.
The Euler genus is an example of a minor-monotone graph parameter.

For $G \in \Gxy$, the graph $G\+$ is the graph obtained from $G$ by adding the edge $xy$ if it is not already present.
We can view the Euler genus of $G\+$ as a graph parameter $\egp$ of $G$, $\egp(G) = \eg(G\+)$.
Note that $\egp$ is minor-monotone and that $\egp(G) - \eg(G) \in \{0,1,2\}$.

Let $\P$ be a graph parameter.
A graph $G$ is \df{$\P$-critical} if $\P(\mu G) < \P(G)$ for each $\mu \in \M(G)$.

Let $\Gcxy$ be the subclass of $\Gxy$ that consists of graphs that do not contain the edge $xy$.
For graphs $G_1, G_2 \in \Gxy$ such that $V(G_1) \cap V(G_2) = \{x,y\}$, the graph $G = (V(G_1) \cup V(G_2), E(G_1) \cup E(G_2))$ is the \df{$xy$-sum} of $G_1$ and $G_2$.
The graphs $G_1$ and $G_2$ are called \df{parts} of $G$.
Let $G$ be the $xy$-sum of $G_1, G_2 \in \Gxy$.

For a graph parameter $\P$, we say that a minor-operation $\mu \in \M(G)$ \df{decreases} $\P$ by at least $k$
if $\P(\mu G) \le \P(G) - k$. The subset of $\M(G)$ that decreases $\P$ by at least $k$ is denoted by $\dc k {\P,G}$. We write just $\dc k \P$  when the graph is clear from the context.
Note that a graph $G$ is $\P$-critical precisely when $\M(G) = \dc1 \P$.

\section{Cascades}
\label{sc-klein-critical}

For a graph parameter $\P$, let $\C(\P)$ denote the class of $\P$-critical graphs in $\Gxy$.
Note that $G \in \C(\P)$ if and only if $\M(G) = \dc1 \P$.
We call $\C(\P)$ the \df{critical class} for $\P$.
Let $\Cc(\P)$ be the class $\C(\P) \cap \Gcxy$.
We refine the class $\C(\P)$ according to the value of $\P$:
Let $\C_k(\P)$ denote the subclass of $\C(\P)$ that contains precisely the graphs $G$ for which $\P(G) = k+1$. Let $\Cc_k(\P)$ be the class $\C_k(\P) \cap \Gcxy$ of those $\P$-critical graphs that do not contain the edge $xy$.

Let us start this section by describing the relation between the classes $\Cc(\eg)$, $\Cc(\egp)$, and $\E$ (unlabeled graphs that are critical for the Euler genus).
The next result follows from the definitions of $\E$ and $\Cc(\eg)$.

\begin{lemma}
\label{lm-cc-eg}
  For $G \in \Gcxy$, $\hat{G} \in \E$ if and only if $G \in \Cc(\eg)$.
\end{lemma}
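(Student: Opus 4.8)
The plan is to unwind both implications straight from the definitions; the only substantive point is keeping track of which minor-operations are available for a graph in $\Gcxy$ and of how they commute with passing to the underlying graph $\hat G$.

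First I would record the elementary observations to be used. Since $G \in \Gcxy$ contains no edge $xy$, the forbidden contraction $(xy,/)$ never arises, so $\M(G) = \{(e,-) \mid e \in E(G)\} \cup \{(e,/) \mid e \in E(G)\}$; that is, deletion and contraction of every edge are available. Moreover $\widehat{G-e} = \hat G - e$ and $\widehat{G/e} = \hat G/e$ for every $e \in E(G)$, and the Euler genus of a graph with terminals is that of its underlying graph, so $\eg(G) = \eg(\hat G)$ and $\eg(\mu G) = \eg(\widehat{\mu G})$ for every $\mu \in \M(G)$. Finally, as $\eg$ is integer-valued, for $\mu \in \M(G)$ the condition $\mu \in \dc1\eg$ is the same as $\eg(\mu G) < \eg(G)$.

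For the forward direction, suppose $\hat G \in \E$, say $\hat G \in \E_k$ with $k \ge 0$; then $\eg(\hat G) > k$, while $\eg(\hat G - e) \le k$ and $\eg(\hat G/e) \le k$ for every $e \in E(\hat G)$. Given any $\mu \in \M(G)$, it is a deletion or a contraction of some edge, so $\eg(\mu G) = \eg(\widehat{\mu G}) \le k < \eg(\hat G) = \eg(G)$, hence $\mu \in \dc1\eg$. Thus $\M(G) = \dc1\eg$, i.e.\ $G \in \C(\eg)$, and since $G \in \Gcxy$ we conclude $G \in \Cc(\eg)$.

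For the converse, suppose $G \in \Cc(\eg)$, so $\M(G) = \dc1\eg$; we may assume $E(G) \ne \emptyset$, the edgeless case being a trivial degeneracy. Then every minor-operation of $G$ strictly lowers $\eg$, so $\eg(\hat G) = \eg(G) \ge 1$; put $k := \eg(\hat G) - 1 \ge 0$. By the observations above, for every edge $e$ of $\hat G$ both $(e,-)$ and $(e,/)$ lie in $\M(G) = \dc1\eg$, so $\eg(\hat G - e) = \eg(G-e) \le \eg(G) - 1 = k$ and likewise $\eg(\hat G/e) \le k$, while $\eg(\hat G) = k+1 > k$. Hence $\hat G$ is critical for Euler genus $k$, i.e.\ $\hat G \in \E_k \subseteq \E$. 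I do not anticipate a genuine obstacle: the lemma is just the dictionary between the unlabeled Euler-genus criticality of Part~I and the $\Gxy$-criticality used here, and the one hypothesis that must be watched is $xy \notin E(G)$, since this is exactly what makes $\eg$-criticality of $G$ test contraction of \emph{every} edge, which is what forces the two notions to coincide.
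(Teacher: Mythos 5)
Your proof is correct and takes exactly the route the paper intends: the paper gives no argument beyond the remark that the lemma ``follows from the definitions,'' and your write-up is precisely that unwinding, with the right emphasis on the fact that $xy\notin E(G)$ makes every deletion and contraction available in $\M(G)$ and that $\eg$ ignores the terminal labels. Your aside about the edgeless degeneracy is a fair (and harmless) pedantic point that the paper implicitly sets aside.
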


The next two lemmas from Part I describe the relation between the class $\Cc(\egp)$ and $\E$.

\begin{lemma}
\label{lm-cc-egp-iff}
  For $G \in \Gcxy$, $\hat{G\+} \in \E$ if and only if $G \in \Cc(\egp)$, $\egp(G) > \eg(G)$, and $\eg(G / xy) < \egp(G)$.
\end{lemma}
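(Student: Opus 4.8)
The plan is to prove the equivalence by directly unwinding the definitions of $\E$, of the parameter $\egp$, and of $\egp$-criticality; the only genuine content is a short ``commutation'' observation, namely that the operation $G \mapsto G\+$ behaves well under deletion and contraction. Throughout I would write $m = \egp(G) = \eg(\hat{G\+})$ and use that, since $xy \notin E(G)$, the edge set of $\hat{G\+}$ is the disjoint union of the ``old'' edges $e \in E(G)$ and the single new edge $xy$, and that $\M(G) = E(G) \times \{-,/\}$, so ``$G$ is $\egp$-critical'' means exactly $\egp(G-e) < m$ and $\egp(G/e) < m$ for every $e \in E(G)$.

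First I would record the commutation facts. For $e \in E(G)$ one has $\hat{G\+} - e = \widehat{(G-e)\+}$ and $\hat{G\+}/e = \widehat{(G/e)\+}$, hence $\eg(\hat{G\+} - e) = \egp(G-e)$ and $\eg(\hat{G\+}/e) = \egp(G/e)$; here one must check that contracting an edge incident with a terminal and then adjoining $xy$ produces the same simple graph as adjoining $xy$ first and then contracting (including the simplification of any parallel edges so created), but this is routine. For the new edge, $\hat{G\+} - xy = \hat G$, so $\eg(\hat{G\+} - xy) = \eg(G)$, while $\hat{G\+}/xy$ is precisely the simple graph $G/xy$ obtained by identifying the terminals, so $\eg(\hat{G\+}/xy) = \eg(G/xy)$.

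Given these identities, the forward implication is immediate: if $\hat{G\+} \in \E$, say $\hat{G\+} \in \E_k$, then $k < m$ and $\eg(\hat{G\+} - f), \eg(\hat{G\+}/f) \le k < m$ for every edge $f$ of $\hat{G\+}$; reading this for the old edges gives $\egp(G-e), \egp(G/e) < m$, i.e.\ $G \in \C(\egp)$, and since $G \in \Gcxy$ by hypothesis we get $G \in \Cc(\egp)$; reading it for $f = xy$ gives $\eg(G) < m = \egp(G)$ and $\eg(G/xy) < m = \egp(G)$. Conversely, assuming the three right-hand conditions, I would set $k = m-1$ and verify $\hat{G\+} \in \E_{m-1}$: we have $\eg(\hat{G\+}) = m > k$; for $f = e \in E(G)$ the $\egp$-criticality of $G$ together with the commutation identities yields $\eg(\hat{G\+} - e) = \egp(G-e) \le k$ and $\eg(\hat{G\+}/e) = \egp(G/e) \le k$; and for $f = xy$ the hypotheses $\egp(G) > \eg(G)$ and $\eg(G/xy) < \egp(G)$ give $\eg(\hat{G\+} - xy) = \eg(G) \le k$ and $\eg(\hat{G\+}/xy) = \eg(G/xy) \le k$. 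Hence $\hat{G\+} \in \E_{m-1} \subseteq \E$.

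I do not expect a real obstacle here; the argument is essentially bookkeeping. The one place that needs care is the commutation observation for contraction when $e$ is incident with a terminal (matching the two passages to simple graphs). Conceptually, the point to get right is that the gap between ``$\egp$-critical'' (quantified over $\M(G)$) and ``critical for the Euler genus of $\hat{G\+}$'' (quantified over all edges of $\hat{G\+}$ and both operations) is accounted for by exactly the two operations on the edge $xy$ — which is why the extra hypotheses $\egp(G) > \eg(G)$ and $\eg(G/xy) < \egp(G)$ are present, and why both are needed: dropping either leaves $\hat{G\+}$ non-critical, since deleting, respectively contracting, $xy$ would fail to reduce the Euler genus.
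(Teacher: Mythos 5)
Your proof is correct, and it is the natural bookkeeping argument for this statement: the commutation identities $\hat{G\+} - e = \widehat{(G-e)\+}$, $\hat{G\+}/e = \widehat{(G/e)\+}$, $\hat{G\+} - xy = \hat{G}$, and $\hat{G\+}/xy = G/xy$ translate the criticality of $\hat{G\+}$ edge-by-edge into the $\egp$-criticality of $G$ plus the two extra conditions governing the edge $xy$. (Note that this lemma is quoted in the present paper from Part I without proof, so there is no in-text argument to compare against; the observation you flag about contraction incident with a terminal being consistent under the passage to simple graphs is exactly the only point requiring care, and you handle it correctly.)
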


Recall the definition of the class $\E^*$ of graphs that are subgraph-minimal graphs without embeddings of Euler genus at most $k$.
More precisely, let $\E^*_k$ be the class of graphs of minimum degree at least 3 such that $\eg(G) > k$ but $\eg(G - e) \le k$ for each edge $e \in E(G)$. Further, we let $\E^* = \bigcup_{k\ge 0} \E^*_k$.

\begin{lemma}
\label{lm-cc-egp}
Let $G \in \Cc(\egp)$. If $\egp(G) = \eg(G)$, then $\hat{G} \in \E$.
If $\egp(G) > \eg(G)$, then either $\hat{G\+} \in \E$, or $\hat{G\+} \in \E^*$ and $\hat{G / xy} \in \E$.
\end{lemma}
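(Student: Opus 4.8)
Write $k=\egp(G)-1$ and note $\egp(G)\ge 1$, since no minor-operation can decrease $\egp$ below $0$. Two elementary observations drive everything. First, for any $H\in\Gxy$ the graph $\hat{H/xy}$ is $\hat{H\+}$ with the edge $xy$ contracted, hence a minor of $H\+$, so $\eg(\hat{H/xy})\le\egp(H)$. Second, since $xy\notin E(G)$, each $\mu\in\M(G)$ is also a minor-operation on $G\+$ other than contracting $xy$, and one checks directly that $\widehat{(\mu G)\+}=\mu(\hat{G\+})$ while $\widehat{\mu G}$ is a subgraph of $(\mu G)\+$; combined with $\egp$-criticality this yields, for every $\mu\in\M(G)$,
\[
\eg\bigl(\mu(\hat{G\+})\bigr)=\egp(\mu G)<\egp(G)\quad\text{and}\quad\eg\bigl(\widehat{\mu G}\bigr)\le\egp(\mu G)<\egp(G).
\]
If $\egp(G)=\eg(G)$, then $\hat G$ is just $G$ with the terminal labels forgotten, so $\eg(\hat G)=\egp(G)\ge 1$ and the minor-operations of $\hat G$ are precisely those in $\M(G)$; by the displayed inequalities each of them lowers the Euler genus, so $\hat G$ is critical for Euler genus $\eg(\hat G)-1$, i.e.\ $\hat G\in\E$. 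This settles the first statement.

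Now assume $\egp(G)>\eg(G)$. By the display, $\eg(\hat{G\+}-e)<k+1$ and $\eg(\hat{G\+}/e)<k+1$ for every $e\in E(G)$, and moreover $\hat{G\+}-xy=\hat G$ has $\eg(\hat G)=\eg(G)<k+1$. Hence $\hat{G\+}$ already satisfies every edge-minimality requirement for Euler genus $k$, and the only remaining question is whether contracting $xy$ also lowers the Euler genus of $\hat{G\+}$; since $\hat{G\+}/xy=\hat{G/xy}$ and $\eg(\hat{G/xy})\le\egp(G)$ by the first observation, the dichotomy is between $\eg(\hat{G/xy})<\egp(G)$ and $\eg(\hat{G/xy})=\egp(G)$. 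I would split on this. In the first case every minor-operation of $\hat{G\+}$ lowers its Euler genus and $\eg(\hat{G\+})=k+1\ge 1$, so $\hat{G\+}$ is critical for Euler genus $k$, i.e.\ $\hat{G\+}\in\E$.

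In the second case, $\eg(\hat{G/xy})=\egp(G)$, I must show $\hat{G\+}\in\E^*$ and $\hat{G/xy}\in\E$. Since edge-minimality of $\hat{G\+}$ is already established, $\hat{G\+}\in\E^*$ reduces to $\delta(\hat{G\+})\ge 3$, which I would prove in the usual way: a vertex of degree $\le 1$, or a degree-$2$ vertex whose two neighbours are non-adjacent, would produce (after deleting a pendant edge, or suppressing) a graph of the same Euler genus, contradicting criticality; while a degree-$2$ vertex $v$ whose neighbours $a,b$ are adjacent sits in a triangle $vab$, and an Observation~\ref{obs-triangle}-style argument (route $v$ inside a face incident with $ab$) gives $\eg(\hat{G\+}-ab)=\eg(\hat{G\+})$, contradicting criticality if $ab\in E(G)$ and, if $ab=xy$ (so $v$ is a non-terminal adjacent to both terminals and $xy\in E(G\+)$), contradicting $\egp$-criticality through $\egp(G-va)=\eg(\hat{G\+}-va)=\eg(\hat{G\+}-v)=\eg(\hat{G\+})=\egp(G)$; the few degenerate possibilities ($\deg_G v=0$, etc.) are excluded by $\egp(G)>\eg(G)$. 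For $\hat{G/xy}\in\E$, note $\eg(\hat{G/xy})=\egp(G)\ge 1$, so it suffices that every minor-operation of $\hat{G/xy}$ strictly lowers its Euler genus; the plan is to lift each such operation back to $G$. An edge $\phi$ of $\hat{G/xy}$ is the image either of a single non-terminal edge $e$ of $G$ — in which case deleting, resp.\ contracting, $\phi$ yields exactly the identification of $x$ and $y$ in $G-e$, resp.\ $G/e$, whose Euler genus is at most $\egp(G-e)$, resp.\ $\egp(G/e)$, by the first observation — or of the set $\{xb,yb\}\cap E(G)$ (one or two edges, where $b$ is the other end of $\phi$) — in which case deleting $\phi$ is the identification of $x,y$ in $G$ with both of $xb,yb$ removed, of $\egp$ at most $\egp(G-xb)$ by minor-monotonicity of $\egp$, and contracting $\phi$ is the identification of $x,y$ in $G/xb$ (or $G/yb$), of Euler genus at most $\egp(G/xb)$. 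In every case $\egp$-criticality bounds the outcome by $\egp(G)-1<\egp(G)=\eg(\hat{G/xy})$, so $\hat{G/xy}$ is critical for Euler genus $\egp(G)-1$ and lies in $\E$.

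\textbf{Where the difficulty lies.} The case $\egp(G)=\eg(G)$ and the branch $\eg(\hat{G/xy})<\egp(G)$ are immediate; all the substance is in the branch $\eg(\hat{G/xy})=\egp(G)$. Within the minimum-degree argument for $\hat{G\+}$, the only point requiring care is the triangle case, and in particular the genuinely $\Gxy$-specific configuration of a degree-$2$ non-terminal joined to both terminals, where one must exploit that $xy\in E(G\+)$. For $\hat{G/xy}\in\E$, the one subtlety is that identifying $x$ and $y$ may merge two edges $xb,yb$ of $G$ into a single edge of $\hat{G/xy}$, so that deleting it corresponds to a double deletion in $G$; this is absorbed using minor-monotonicity of $\egp$ rather than by criticality alone. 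Everything else is bookkeeping with the definitions of $\M(G)$, $G\+$, and $G/xy$.
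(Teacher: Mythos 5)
The paper does not actually prove this lemma — it is quoted from Part I \cite{MS2014_I} — so there is no in-paper argument to compare against; judged on its own, your proof is correct and follows the only natural route. The reductions $\widehat{(\mu G)\+}=\mu(\hat{G\+})$ and $\eg(\hat{H/xy})\le\egp(H)$ do carry the whole first case and the branch $\eg(\hat{G/xy})<\egp(G)$, and your lifting of minor-operations of $\hat{G/xy}$ back to $G$ (including the merged-edge case $\{xb,yb\}$, absorbed by monotonicity of $\egp$) is sound. The one step with genuine content is, as you say, $\delta(\hat{G\+})\ge3$ in the $\E^*$ branch, and your case analysis there is right: degree $\le1$ and the suppression of a degree-$2$ vertex with non-adjacent neighbours contradict $\egp$-criticality (one of the two incident edges always lies in $E(G)$, so the contraction is legal), and the triangle case either contradicts criticality via deleting the opposite edge or, when that edge is $xy$, can be dispatched even more directly by noting that a degree-$2$ vertex adjacent to both terminals forces $\eg(G\+)=\eg(G)$, contradicting the case hypothesis. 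The only loose end is your parenthetical claim that all degenerate possibilities are excluded by $\egp(G)>\eg(G)$: an isolated \emph{non-terminal} vertex is invisible to edge-criticality and is not excluded by that inequality, so $\delta(\hat{G\+})\ge3$ cannot be literally deduced from the stated hypotheses; this is the standard implicit convention that critical graphs carry no isolated vertices, and it affects the lemma as stated, not your argument specifically.
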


A graph $G \in \Gcxy$ is called a \df{cascade} if $G$ satisfies the following properties:
\begin{enumerate}[label=(C\arabic*)]
\item
  $\M(G) = \dc1 \eg \cup \dc1 \egp$ (i.e., each minor operation decreases $\eg$ or $\egp$).
\item
  $G \not\in \Cc(\eg)$ (i.e., some minor operation does not decrease $\eg$).
\item
  $G \not\in \Cc(\egp)$ (i.e., some minor operation does not decrease $\egp$).
\end{enumerate}

Let $\S$ be the class of all cascades.
We refine the class $\S$ according to the Euler genus.
Let $\S_k$ be the subclass of $\S$ containing those graphs $G$ for which $\egp(G) = k+1$.
It is not hard to see that for $G \in \S_k$ we have that $\eg(G) = k$.

In this paper, we determine the class $\S_1$ which appears as a class of building blocks for obstructions of connectivity 2 for the Klein bottle.

\section{Bridges and cycles}
\label{sc-bridges-and-cycles}

In the rest of the paper, we develop framework which we use to determine the class $\S_1$ of cascades of genus 1.

Let $H$ be a subgraph of $G$. An \df{$H$-bridge} $B$ is either an edge in $E(G) \sm E(H)$ with both ends in $H$ or
a connected component $C$ of $G - V(H)$ together with all edges with at least one end in $C$.
In the former case we say that the bridge $B$ is \df{trivial}. The vertices in $V(B) \cap V(H)$ are the \df{attachments} of $B$.
We also say that $B$ \df{attaches} at $v$, for $v \in V(B) \cap V(H)$.
The graph $B^\circ = B - V(H)$ is the \df{interior} of $B$.
We will use the  following lemma (see~\cite[Prop.~6.1.2.]{mohar-book}).

\begin{lemma}
  \label{lm-planar-patch}
  Let $G_1 \in \Gcxy$ be a nontrivial $\{x, y\}$-bridge of a graph $G$.
  If $G_1^+$ is planar, then every embedding of $(G-G_1^\circ)\+$ into a surface can be extended to an embedding of $G$ into the same surface.
\end{lemma}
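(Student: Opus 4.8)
The plan is to argue topologically, building the required embedding of $G$ as a subset of the given host surface $S$, so that ``the same surface'' comes for free. Since $G_1^+$ is planar, fix an embedding of $G_1^+$ in the sphere $\SS^2$. As $G_1$ is a nontrivial bridge it is connected, so $xy$ is not a cut-edge of $G_1^+$ and hence is incident with two faces; let $\Phi_0$ be one of them. Removing the open disk $\Phi_0$ exhibits $G_1^+$ as embedded in the closed disk $\bar\Delta := \SS^2 \sm \Phi_0$ in such a way that the edge $xy$ is one of the two arcs into which its endpoints $x$ and $y$ split the boundary circle $\partial\bar\Delta$; write $\beta_0$ for the other arc.

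Set $K := (G-G_1^\circ)\+$ and let $\Pi'$ be the given embedding of $K$ into $S$, recalling that $xy \in E(K)$. Next I would carve out, alongside the edge $xy$, a disk in $S$ into which $\bar\Delta$ can be pasted. Choose a face $F$ of $\Pi'$ incident with the edge $xy$ and thicken the closed edge $xy$ slightly into $F$, obtaining a closed disk $\Delta_1 \subseteq S$ for which $\Delta_1 \cap K$ is exactly the closed edge $xy$ and the edge $xy$ is one of the two arcs of $\partial\Delta_1$ between $x$ and $y$; the other arc $\beta$ then lies, apart from $x$ and $y$, in the open face $F$ and so is disjoint from $K$. (Such a thickening exists because the interior of the edge $xy$ and, on the $F$-side, the corners of $\Pi'$ at $x$ and at $y$ next to $xy$ are free of $K$.)

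Now glue. By the Schoenflies theorem there is a homeomorphism $\varphi\colon \bar\Delta \to \Delta_1$ taking the edge $xy$ of $\partial\bar\Delta$ onto the edge $xy$ of $\partial\Delta_1$ (so $x\mapsto x$ and $y\mapsto y$) and $\beta_0$ onto $\beta$. Transporting through $\varphi$ the copy of $G_1^+$ lying in $\bar\Delta$ places $G_1^+$ into $S$ inside $\Delta_1$; since $\Delta_1 \cap K$ is precisely the closed edge $xy$, this placed copy meets $\Pi'$ only in $x$, $y$, and the edge $xy$, which is exactly the intersection of the subgraphs $G_1^+$ and $K$ of $G\+ = K \cup G_1^+$, and on it the two embeddings agree. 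Hence the union is an embedding of $G\+$ into $S$ that restricts to $\Pi'$ on $K$. If $xy \in E(G)$ then $G\+ = G$ and we are done; otherwise delete the edge $xy$ from this embedding to get the required embedding of $G$ into $S$.

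The argument is the standard ``planar patch'' insertion (cf.\ \cite[Prop.~6.1.2]{mohar-book}), so no essential difficulty is expected; the one point that genuinely needs care is that the pasting disk $\Delta_1$ meets the embedded graph $K$ only in the intended overlap, which is exactly where we use that $xy$ is incident with a face of $\Pi'$. A minor bookkeeping point is the split into the cases $xy \in E(G)$ and $xy \notin E(G)$, together with the observation that in the latter case deleting $xy$ keeps us in $S$ because $G$ is then a subgraph of $G\+$, already embedded in $S$. (If $G_1$ attaches at only one of $x,y$ the claim is even easier, $G_1$ being a planar piece pendant at a single vertex.)
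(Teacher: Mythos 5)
The paper does not prove this lemma; it simply cites it as Proposition~6.1.2 of Mohar--Thomassen \cite{mohar-book}, which is the standard ``planar patch'' insertion result. Your topological cut-and-paste argument — embed $G_1^+$ in a disk with $xy$ on the boundary, thicken $xy$ into a face of $\Pi'$ to create a pasting disk $\Delta_1$ meeting $K$ only in the closed edge $xy$, glue via Schoenflies, and delete $xy$ at the end if $xy\notin E(G)$ — is correct and is essentially how that proposition is established. The one spot worth tightening is the parenthetical at the end: since $G_1\in\Gcxy$ has both $x$ and $y$ as terminals, it is a bridge attaching at both vertices, so the one-attachment case does not in fact arise under the lemma's hypotheses.
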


A \df{branch vertex} in $H$ is a vertex of degree different from 2. A \df{branch} in $H$ is a path $P$ connecting two branch vertices $v_1, v_2$ such that
all vertices in $V(P) \sm \{v_1, v_2\}$ have degree 2 in $H$. An \df{open branch} is obtained from a branch by removing its endvertices.

A \df{subdivision} of $G$ is a graph obtained from $G$ by replacing each edge of $G$ by a path of length at least 1.
A graph $H$ is \df{homeomorphic} to $G$, $H\cong G$, if there is a graph $K$ such that both $G$ and $H$ are isomorphic to subdivisions of $K$.
A \df{Kuratowski subgraph} in $G$ is a subgraph of $G$ homeomorphic to a \df{Kuratowski graph}, $K_5$ or $K_{3,3}$.
A \df{K-graph} in $G$ is a subgraph $L$ of $G$ which is homeomorphic to either $K_4$ or $K_{2,3}$
such that there is an $L$-bridge in $G$ that attaches to all four branch vertices of $L$ when $L\cong K_4$
or attaches to all three open branches of $L$ when $L\cong K_{2,3}$. Such an $L$-bridge is a \df{principal} $L$-bridge.

Let $C$ be a cycle in a graph $G$.
Two $C$-bridges $B_1$ and $B_2$ \df{overlap} if at least one of the following conditions holds:
\begin{enumerate}[label=\rm(\roman*)]
\item
  $B_1$ and $B_2$ have three attachments in common;
\item
  $C$ contains distinct vertices $v_1, v_2, v_3,  v_4$ that appear in this order on $C$ such that
  $v_1$ and $v_3$ are attachments of $B_1$ and $v_2$ and $v_4$ are attachments of $B_2$.
\end{enumerate}
In the case (ii), we say that $B_1$ and $B_2$ \df{skew-overlap}.
The \df{overlap graph} $O(G,C)$ of $G$ with respect to $C$ is the graph whose vertex-set consists of the $C$-bridges in $G$,
and two $C$-bridges are adjacent in $O(G,C)$ if they overlap.

Let $C$ be a cycle in a graph $G$.
For a $C$-bridge $B$ in $G$, the \df{$B$-side} of $C$ is the union of all $C$-bridges at even distance from $B$ in the overlap graph $O(G, C)$.
For a vertex $v \in V(G) \sm V(C)$, the \df{$v$-side} of $C$ is the $B$-side of the $C$-bridge $B$ containing $v$.
Two vertices $u,v \in V(G) \sm V(C)$ are \df{separated} by $C$ if the $C$-bridges containing $u$ and $v$ have odd distance in $O(G,C)$.
We also say that $C$ is \df{$(u,v)$-separating}.

Let $G$ be a $\Pi$-embedded graph with the set $F(\Pi)$ of $\Pi$-faces.
The \df{$\Pi$-face-distance} $\dd_\Pi(v_1, v_2)$ of $v_1, v_2 \in V(G)$ is the minimum number $k$ such that there exists a sequence $u_0$, $f_0$, $u_1, \ldots, u_k,f_k$, $u_{k+1}$ such that $u_0 = v_1$, $u_{k+1} = v_2$,
and the face $f_i \in F(\Pi)$ is incident with $u_i$ and $u_{i+1}$, for $i = 0, \ldots, k$.
The \df{face-distance} $\dd_G(v_1, v_2)$ is the minimum $\Pi$-face-distance $\dd_\Pi(v_1, v_2)$ over all planar embeddings $\Pi$ of $G$. Note that the face-distance is 0 if and only if the graph $G+v_1v_2$ is planar.

The following result relating number of separating cycles and the face-distance of two vertices shall be used.

\begin{lemma}[Cabello and Mohar~\cite{cabello-2011}, Lemma~5.3]
\label{lm-separating-cycles}
Let $G$ be a planar graph and $x, y \in V(G)$.
Then the maximum number of disjoint $(x,y)$-separating cycles in $G$ is $\dd_G(x,y)$.
\end{lemma}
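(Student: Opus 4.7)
The plan is to prove the two inequalities of the asserted equality separately. Write $\nu$ for the maximum number of pairwise vertex-disjoint $(x,y)$-separating cycles in $G$.

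For $\dd_G(x,y)\ge\nu$, fix any planar embedding $\Pi$ and vertex-disjoint $(x,y)$-separating cycles $C_1,\dots,C_\nu$. As simple closed curves in the plane they are pairwise disjoint and each separates $x$ from $y$, hence they form a nested family around $x$ and partition the plane into $\nu+1$ regions $R_0\ni x,R_1,\dots,R_\nu\ni y$. Every $\Pi$-face lies in the closure of exactly one $R_j$, so in any face-walk $x=u_0,f_0,u_1,\dots,f_k,u_{k+1}=y$ consecutive vertices $u_i,u_{i+1}$ lie in the same $\overline{R_j}$; each transition from $\overline{R_{j-1}}$ to $\overline{R_j}$ forces some intermediate $u_i$ to lie on $C_j$, and these intermediate vertices are distinct because the $C_j$ are vertex-disjoint. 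Therefore $k\ge\nu$, and minimising over $\Pi$ gives $\dd_G(x,y)\ge\nu$.

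For $\dd_G(x,y)\le\nu$, I would argue by induction on $d:=\dd_G(x,y)$, with the base $d=0$ being trivial. For $d\ge1$, fix a planar embedding $\Pi$ with $\dd_\Pi(x,y)=d$. The first step is to produce a single $(x,y)$-separating cycle $C$: since $x$ and $y$ do not share a $\Pi$-face, the frontier of the closed region obtained by unioning all $\Pi$-faces incident with $x$ contains a simple cycle of $G$ that separates $x$ from $y$. The second step is to reduce: form the planar graph $G'$ obtained by identifying $V(C)$ together with everything on the $x$-side of $C$ to a single vertex $x'$, and argue that $\dd_{G'}(x',y)=d-1$. By induction, $G'$ contains $d-1$ pairwise vertex-disjoint $(x',y)$-separating cycles; each of them lies on the $y$-side of $C$ in $G$ and is therefore vertex-disjoint from $C$, so together with $C$ they form the desired family of $d$ pairwise vertex-disjoint $(x,y)$-separating cycles in $G$.

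The main obstacle is controlling the face-distance after the reduction step. One needs to choose $C$ carefully enough that the contracted graph $G'$ has face-distance exactly $d-1$: the natural choice is an ``innermost'' separating cycle around $x$, corresponding to the frontier of the first BFS layer in the radial graph of $\Pi$. Showing that this frontier does contain a simple cycle of $G$ that separates $x$ from $y$, and that the identification does not create shortcuts in the face-metric (so that $\dd_{G'}(x',y)$ does not collapse below $d-1$), requires delicate topological analysis of the planar embedding. This is the technical core of the lemma and is essentially the content of the Cabello--Mohar argument.
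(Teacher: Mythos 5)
First, note that the paper does not prove this lemma at all: it is imported verbatim from Cabello and Mohar~\cite{cabello-2011} (their Lemma~5.3), so there is no internal proof to compare yours against. Judged on its own, your argument for the inequality $\dd_G(x,y)\ge\nu$ is correct and complete: pairwise disjoint cycles each separating $x$ from $y$ are necessarily nested, every $\Pi$-face lies in the closure of a single region of the resulting partition, and since the cycles are vertex-disjoint each of the $\nu$ cycles forces a distinct intermediate vertex $u_i$ in any face-chain from $x$ to $y$. (One small point worth making explicit: ``$(x,y)$-separating'' is defined here combinatorially via odd distance in the overlap graph $O(G,C)$, which does imply that $B_x$ and $B_y$ lie on opposite sides of $C$ in \emph{every} planar embedding; that equivalence is what your topological argument silently uses.)

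The converse inequality, however, is left as a strategy rather than a proof, and that is where the entire content of the lemma sits. Your induction needs two unproved claims: (i) if $\dd_G(x,y)\ge1$ then an $(x,y)$-separating cycle $C$ exists at all (the frontier of the union of faces incident with $x$ need not obviously contain a single simple cycle of $G$ that separates, in the overlap-graph sense, $x$ from $y$ --- this already requires some connectivity bookkeeping); and (ii) $\dd_{G'}(x',y)=d-1$ after contracting the $x$-side of $C$. For (ii) you need both inequalities: the upper bound $\dd_{G'}(x',y)\le d-1$ does not follow from ``minor operations cannot increase the face-distance'' (that only gives $\le d$) and must come from exhibiting a shorter face-chain in the contracted embedding; the lower bound $\dd_{G'}(x',y)\ge d-1$ is the hard part, because $\dd_{G'}$ is a minimum over \emph{all} planar embeddings of $G'$, not just the one induced by your chosen $\Pi$, so you must rule out that $G'$ admits some other embedding with a much shorter face-chain. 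You candidly flag this as ``the technical core'' and ``essentially the content of the Cabello--Mohar argument,'' which is an accurate self-assessment: without it the proposal is a correct half-proof plus a plausible but unexecuted plan for the other half, and so it does not yet constitute a proof of the lemma.
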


Let $C$ be a cycle in a $\Pi$-embedded graph $G$ and $\SS$ the surface where $G$ is 2-cell embedded by $\Pi$.
The cycle $C$ is \df{$\Pi$-contractible} if $C$ forms a surface-separating curve on $\SS$ such that one region
of $\SS - C$ is homeomorphic to an open disk.

Let $P_1, P_2, P_3$ be internally disjoint paths connecting vertices $u$ and $v$ in $G$.
If the cycles $P_1 \cup P_2$ and $P_2 \cup P_3$ are $\Pi$-contractible, then the cycle $P_1 \cup P_3$ is also $\Pi$-contractible (see~\cite{mohar-book}, Proposition 4.3.1).
This property is called \df{3-path-condition}.
Let $T$ be a spanning tree of $G$. A \df{fundamental cycle} of $T$ is the unique cycle in $T + e$ for an edge $e \in E(G) \sm E(T)$.

\begin{lemma}
  \label{lm-fund-cycles}
  Let $G$ be a $\Pi$-embedded graph, $L$ a K-graph in $G$, and $T$ a spanning tree of $L$.
  Then one of the fundamental cycles of $T$ in $L$ is $\Pi$-noncontractible.
\end{lemma}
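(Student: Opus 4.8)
The plan is to argue by contradiction: I assume that every fundamental cycle of $T$ in $L$ is $\Pi$-contractible, and show that $L$ then cannot be a K-graph. Since $L$ is a K-graph it is a subdivision of $K_4$ or of $K_{2,3}$; let $v_1,\dots,v_t$ be its branch vertices and let $b_{ij}$ denote its branches. The first, purely combinatorial, observation is that a spanning tree $T$ of $L$ contains each branch in full except for one edge removed from each of exactly $c=|E(L)|-|V(L)|+1\in\{2,3\}$ branches: acyclicity forbids having two full branches that close a cycle, while truncating more than $c$ branches would delete too many edges. In the $K_{2,3}$ case this forces $T$ to contain exactly one of the three branches in full, so the two fundamental cycles of $T$ in $L$ both use that branch; in the $K_4$ case the three branches contained in full form a spanning tree $T_0$ of $K_4$, and each fundamental cycle of $T$ in $L$ is a subdivision of a fundamental cycle of $T_0$. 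In either case every cycle of $L$ is a subdivision of a unique cycle of the underlying $K_4$ or $K_{2,3}$.

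The second step is to propagate $\Pi$-contractibility using the 3-path-condition. In the $K_{2,3}$ case the three branches of $L$ are internally disjoint paths between its two branch vertices, and $L$ has exactly three cycles, each a union of two branches; since the two fundamental cycles of $T$ share a branch, one application of the 3-path-condition shows that all three cycles of $L$ are $\Pi$-contractible. In the $K_4$ case, for any two branch vertices $v_i,v_j$ the paths $b_{ij}$, $b_{ik}\cup b_{kj}$ and $b_{il}\cup b_{lj}$ (where $\{k,l\}$ is the complement of $\{i,j\}$) are three internally disjoint $v_iv_j$-paths, whose pairwise unions are the two triangles through the edge $v_iv_j$ and the $4$-cycle avoiding $v_iv_j$; a spanning tree of $K_4$ is, up to symmetry, a star or a path, and in each case a short chain of such 3-path-deductions, starting from the three fundamental cycles of $T_0$, certifies all seven cycles of $K_4$ — hence all cycles of $L$ — as $\Pi$-contractible.

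The third step derives a contradiction from a principal $L$-bridge $B$. Since $L$ is connected and every cycle of $L$ is $\Pi$-contractible, $L$ lies inside a closed disk $\Delta\subseteq\SS$ (a connected embedded graph all of whose cycles bound disks is contained in a disk; see~\cite{mohar-book}). Consequently the faces of $L$ in $\SS$ are those of a planar embedding of $L$, so the boundary of every face of $L$ is a subdivided triangle of $K_4$, containing exactly three of the four branch vertices, when $L\cong K_4$, and a subdivided $4$-cycle of $K_{2,3}$, meeting exactly two of the three open branches, when $L\cong K_{2,3}$. Now the interior $B^\circ$ is connected and disjoint from $L$, so it lies inside a single face $f$ of $L$, whence every attachment of $B$ lies on the boundary walk of $f$. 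But, being principal, $B$ attaches to all four branch vertices when $L\cong K_4$ and to all three open branches when $L\cong K_{2,3}$, contradicting the description of $\partial f$. Therefore some fundamental cycle of $T$ in $L$ is $\Pi$-noncontractible.

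The step I expect to be the main obstacle is the topological input opening the third paragraph — the implication that if every cycle of $L$ is $\Pi$-contractible then $L$ lies in a closed disk of $\SS$, together with the resulting control over the facial walks of $L$. If it is not convenient to quote this, I would prove it directly: pick a triangle (a $4$-cycle, if $L\cong K_{2,3}$) $C$ of $L$, which is $\Pi$-contractible and so bounds a disk; the branches of $L$ not on $C$ are internally disjoint from $C$ with both ends on $C$, so each lies entirely on one side of $C$, and choosing $C$ and the side so that this side is a disk confines all of $L$ to a closed disk. The $K_4$ deduction chain of the second step also has to be carried out explicitly so that each of the seven cycles is reached.
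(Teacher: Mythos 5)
Your proof is correct and follows the same strategy as the paper's: assume all fundamental cycles are contractible, propagate contractibility to all cycles of $L$ via the 3-path-condition, conclude that $L$ bounds only ``planar'' faces, and note that a principal $L$-bridge cannot fit into any such face. The paper's proof is just a condensed version of yours, asserting the cycle-space-plus-3-path-condition step without the explicit combinatorial analysis of the spanning tree and the triangle/quadrilateral chains that you spell out (which is a reasonable level of detail to supply).
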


\begin{proof}
  Suppose that all fundamental cycles of $T$ are $\Pi$-contractible.
  Since fundamental cycles of $T$ generate the cycle space of $L$, the 3-path-condition gives that each cycle of $L$ is $\Pi$-contractible.
  Thus $L$ separates the surface into three regions when $L\cong K_{3,3}$ and into four regions when $L\cong K_4$.
  Since $L$ is a K-graph in $G$, there is a principal $L$-bridge $B$ in $G$.
  But the attachments of $B$ does not lie on a single cycle of $L$ and thus $B$ cannot be embedded into any of the regions --- a contradiction.
\end{proof}

Since all cycles are contractible when genus is zero and any two $\Pi$-noncontractible cycles on the projective plane intersect, we have the following result.

\begin{lemma}
  \label{lm-noncontractible}
  Let $G$ be a $\Pi$-embedded graph. If $G$ contains two disjoint $\Pi$-noncontractible cycles,
  then $\eg(\Pi) \ge 2$.
\end{lemma}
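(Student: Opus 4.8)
The plan is to argue by contradiction, exploiting the fact that a surface of Euler genus at most $1$ is either the sphere or the projective plane. Suppose $\eg(\Pi)\le 1$, let $\SS$ be the surface on which $G$ is $2$-cell embedded by $\Pi$, and let $C_1,C_2$ be the two disjoint $\Pi$-noncontractible cycles given by the hypothesis. Since the sphere and the projective plane are the only surfaces of Euler genus $0$ and $1$, the surface $\SS$ is one of these two.

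First I would dispose of the sphere: by the Jordan curve theorem every simple closed curve on the sphere separates it into two open disks, so $C_1$ would be $\Pi$-contractible, contrary to its choice. Hence $\SS$ is the projective plane. Now I would use that any two $\Pi$-noncontractible cycles in the projective plane must intersect; the cleanest self-contained way to obtain this is to cut $\SS$ along $C_1$. As $C_1$ is noncontractible it is $1$-sided, and cutting the projective plane along a $1$-sided simple closed curve yields a closed disk. Since $C_2$ is disjoint from $C_1$ it lies in the interior of that disk, so it bounds a disk and is therefore $\Pi$-contractible in $\SS$ --- again contrary to its choice. This contradiction shows $\eg(\Pi)\ge 2$.

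The only point that needs a little care --- the main (mild) obstacle --- is passing from this topological picture to the combinatorial setting of embeddings given by a rotation system and a signature: one should note that cutting the $2$-cell embedding $\Pi$ along the cycle $C_1$ yields a bona fide planar (disk) embedding of $G-E(C_1)$ into which $C_2$ then embeds, so the argument goes through combinatorially. This is standard (see \cite{mohar-book}), and combined with the definition of $\Pi$-contractibility --- which demands that a contractible cycle both be separating and bound a disk --- it completes the proof.
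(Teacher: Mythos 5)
Your argument follows exactly the route the paper takes: the paper's entire ``proof'' is the one-line remark that all cycles are contractible on the sphere and that any two noncontractible cycles on the projective plane must intersect. You say the same, merely filling in the standard justification (cutting the projective plane along a one-sided simple closed curve gives a disk) and flagging the combinatorial-to-topological translation; both are correct and unobjectionable.
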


The next lemma is a simple corollary of Lemmas~\ref{lm-fund-cycles} and~\ref{lm-noncontractible}.

\begin{lemma}
\label{lm-two-k-graphs}
  If $G$ satisfies one of the following conditions, then $\eg(G) \ge 2$.
  \begin{enumerate}[label=\rm(\roman*)]
  \item
    $G$ contains two disjoint K-graphs.
  \item
    $G$ contains a Kuratowski subgraph $K$ and a K-graph $L$ that intersects $K$ in at most one half-open branch of $K$.
  \item
    $G$ contains a Kuratowski subgraph $K$ and a K-graph $L$ homeomorphic to $K_{2,3}$
    such that $K$ and $L$ intersect in at most one branch $P$ of $K$,
    and the ends of $P$ do not lie on the same branch of $L$.
  \end{enumerate}
\end{lemma}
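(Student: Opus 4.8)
The plan is to argue by contradiction. Assuming $\eg(G)\le 1$, note that in each of the cases (i)--(iii) the graph $G$ contains a Kuratowski subgraph and hence is not planar, so $\eg(G)=1$ and $G$ has an embedding $\Pi$ in the projective plane. In such an embedding a noncontractible simple closed curve has an open disk for its complement, and every Kuratowski subgraph contains a K-graph (take four branch vertices of a $K_5$-subdivision together with the branches at the fifth as a principal bridge; or a $K_{2,3}$-subdivision inside a $K_{3,3}$-subdivision with the star at the remaining branch vertex as a principal bridge). Thus Lemma~\ref{lm-fund-cycles}, applied to a K-graph inside a nonplanar subgraph and to an arbitrary spanning tree, supplies a $\Pi$-noncontractible cycle inside that subgraph. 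By Lemma~\ref{lm-noncontractible} it then suffices to produce two disjoint $\Pi$-noncontractible cycles.

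For (i) this is immediate: Lemma~\ref{lm-fund-cycles}, applied to each of the two disjoint K-graphs with an arbitrary spanning tree, yields $\Pi$-noncontractible cycles $C_1$ and $C_2$, and they are vertex-disjoint because the K-graphs are.

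For (ii) and (iii) I would set $R=K\cap L$, a path lying inside a single branch of $K$. Since any cycles $C_K\subseteq K$ and $C_L\subseteq L$ satisfy $C_K\cap C_L\subseteq R$, it is enough to choose $\Pi$-noncontractible cycles inside a K-graph of $K$ and inside a K-graph of $L$ (both via Lemma~\ref{lm-fund-cycles}) whose intersections with $R$ are disjoint; in particular, if one of them avoids $R$ altogether it is disjoint from the other side entirely and the second cycle may be taken arbitrary. To arrange this I would use the freedom in Lemma~\ref{lm-fund-cycles} (which K-graph, which spanning tree) together with the restriction on $R$ coming from the hypothesis: in (ii) a half-open branch of $K$ contains only one branch vertex of $K$, so $R$ meets at most one branch vertex of $K$ and therefore occupies a restricted position inside any K-graph of $K$ that uses it; in (iii) forcing the two ends of $R$ onto distinct branches of $L$ excludes the one configuration --- $R$ covering an entire branch of $L$ --- in which every $\Pi$-noncontractible cycle of $L$ would have to pass through $R$. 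Running over the finitely many homeomorphism types of $K$ and $L$ and the finitely many positions of $R$ relative to them, one then checks case by case that compatible cycles $C_K$ and $C_L$ can be selected.

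The hard part will be exactly this case analysis for (ii) and (iii). The awkward configurations are those in which $R$ straddles a vertex of degree at least $3$ of the $K_{2,3}$ or $K_4$ inside $L$ (so that vertex lies on every noncontractible cycle on the $L$-side), or, symmetrically, inside $K$; there the required disjoint cycle cannot be found on that side and must be located on the other, and it is precisely the conditions ``half-open branch of $K$'' in (ii) and ``ends on distinct branches of $L$'' in (iii) that guarantee the other side contains a $\Pi$-noncontractible cycle missing the relevant portion of $R$.
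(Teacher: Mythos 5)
Your treatment of (i) is correct and coincides with the paper's. For (ii) and (iii), however, you have written a \emph{plan} rather than a proof: you state that "the hard part will be exactly this case analysis for (ii) and (iii)" and then leave that analysis undone. That case analysis is exactly where the content of the lemma lives, so as it stands the proposal has a genuine gap. The references you do give (every Kuratowski subdivision contains a K-graph; Lemmas~\ref{lm-fund-cycles} and~\ref{lm-noncontractible}) are the right tools, but saying "run over the finitely many homeomorphism types and positions of $R$" is not a proof until those cases are actually written out, and the configurations you yourself flag as awkward (where $R$ straddles a degree-$\ge 3$ vertex of the K-graph on one side) are precisely the ones that need an argument.

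You are also missing a much shorter route for (ii). Write $P$ for the branch of $K$ with ends $u,v$ so that $V(K)\cap V(L)\subseteq V(P)\setminus\{v\}$. Rather than trying to steer a noncontractible cycle of $K$ around $R=K\cap L$, simply delete the branch vertex $u$ from $K$: the remaining $K_4$- or $K_{2,3}$-subdivision $L'$ (with the paths that formerly led to $u$ as a principal $L'$-bridge) meets $P$ only in $v$, while $L$ avoids $v$, so $L\cap L'=\emptyset$. Thus (ii) reduces directly to (i) with no cycle bookkeeping at all. For (iii), the paper's argument is a hybrid of this deletion trick and a targeted application of Lemma~\ref{lm-fund-cycles}: take a spanning tree $T$ of $L$ in which both ends $u,v$ of $P$ are leaves (possible because they lie on different branches of $L$, so $uv\notin E(L)$ and each is the endpoint of a distinct chord), obtain from Lemma~\ref{lm-fund-cycles} a noncontractible fundamental cycle $C$ that misses one of them, say $u$, and then produce a K-graph in $K-u$ disjoint from $C$, finishing with Lemmas~\ref{lm-fund-cycles} and~\ref{lm-noncontractible}. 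If you insist on your symmetric ``one cycle on each side'' strategy, you will have to carry out the deferred case check in full; in either case, what you have so far does not yet prove the lemma.
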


\begin{proof}
  If (i) holds, then the result follows by Lemmas~\ref{lm-fund-cycles} and~\ref{lm-noncontractible}.
  Suppose that (ii) holds and that $P$ is the branch of $K$ with ends $u$ and $v$ such that $V(L) \cap V(K) \ss V(P) \sm \{v\}$.
  The K-graph $L'$ in $G$ obtained from $K$ by deleting $u$ is disjoint from $L$.
  The result follows by~(i).

  Assume now that (iii) holds and that $P$ is the branch of $K$ with ends $u$ and $v$.
  Let $T$ be a spanning tree of $L$ such that $u$ and $v$ are its leaves.
  By Lemma~\ref{lm-fund-cycles}, there is a fundamental cycle $C$ of $T$ that is $\Pi$-noncontractible.
  Since $u$ and $v$ do not lie on a single branch of $L$ and they have degree 1 in $T$,
  we may assume that $C$ does not contain $u$. Thus, $K - u$ contains a K-graph disjoint from $C$.
  The result now follows by Lemmas~\ref{lm-fund-cycles} and~\ref{lm-noncontractible}.
\end{proof}

\section{Disjoint K-graphs in cascades}
\label{sc-k-graphs}

In this section, we show that for every cascade $G \in \S_1$, the graph $G^+$ contains two disjoint K-graphs.
We need the following property of separating cycles.

\begin{lemma}
\label{lm-close-cycles}
  Let $G$ be a planar graph, let $x,y \in V(G)$ be vertices separated by a cycle $C$, and let $H$ be the $x$-side of $C$.
  Then there exists an $(x,y)$-separating cycle $C'$ such that $C' \subseteq H \cup C$ and the $C'$-bridges
  containing $x$ and $y$ overlap.
\end{lemma}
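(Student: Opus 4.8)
The plan is to fix a planar embedding $\Pi$ of $G$ (say on the sphere) and run an ``innermost cycle'' argument, after a harmless cleanup.

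\emph{Preliminary reduction and setup.} First I would delete from $G$ every $C$-bridge lying in a different component of the overlap graph $O(G,C)$ from the bridge $B_x^C$ containing $x$; such a bridge is at infinite distance from $B_x^C$, so erasing it changes neither $H$ (whose bridges are all at even, hence finite, distance from $B_x^C$) nor the fact that $C$ is $(x,y)$-separating (a witnessing overlap-chain survives), and a routine monotonicity check shows it does not affect the conclusion (passing back to $G$ only enlarges $C'$-bridges, which preserves overlaps and keeps the bridges of $x$ and $y$ distinct and in one component of the overlap graph). After this reduction $O(G,C)$ is connected; since $G$ is planar, $O(G,C)$ is bipartite and its two classes are exactly the $C$-bridges drawn on the two sides of $C$ in $\Pi$. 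Hence, writing $\overline\Delta$ for the closed disk bounded by $C$ that contains $x$, the subgraph $H\cup C$ is precisely the part of $G$ drawn inside $\overline\Delta$, while $y$ lies outside $\overline\Delta$. Consequently every $(x,y)$-separating cycle $D\subseteq H\cup C$ bounds a disk $\Delta(D)\subseteq\overline\Delta$ with $x$ in its interior and $y$ outside, and the $D$-bridge of $x$ is contained in $H\cup C$.

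\emph{Choice of $C'$ and the structural step.} Among all $(x,y)$-separating cycles $C'\subseteq H\cup C$ (nonempty, since $C$ itself qualifies) I would take one minimizing the number of faces of $\Pi$ lying inside $\Delta(C')$, and claim that for it the bridges $B_x:=B_x^{C'}$ and $B_y:=B_y^{C'}$ overlap. Suppose not. Since $C'$ is separating, $B_x$ is not isolated in $O(G,C')$, so it has at least two attachments $a_1,\dots,a_k$ on $C'$ ($k\ge2$), in cyclic order; and $B_x$ is drawn inside $\Delta(C')$, $B_y$ outside it. As $B_x$ and $B_y$ share neither three attachments nor a skew-overlapping quadruple, a direct look at the cyclic order shows all attachments of $B_y$ lie on one arc: if some attachment of $B_y$ were strictly inside an arc $(a_i,a_{i+1})$ and another outside the closed arc $[a_i,a_{i+1}]$, one reads off a cyclic quadruple $a_i,\,b,\,a_{i+1},\,b'$ with $b,b'\in V(B_y)$, a skew-overlap; the remaining case (all attachments of $B_y$ among the $a_i$) either confines them to one closed arc or produces a common triple. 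So all attachments of $B_y$ lie on one closed arc $Q=[a_1,a_2]$ of $C'$ whose interior avoids $a_3,\dots,a_k$; let $Q'$ be the complementary arc, so $C'=Q\cup Q'$.

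\emph{The reroute and the main obstacle.} Let $P$ be the path in $B_x$ from $a_1$ to $a_2$ which, on the $\Delta(C')$-side, bounds the face $F_0$ of $B_x\cup C'$ incident with the interior of $Q$; then $P$ is internally disjoint from $C'$, and $P\cup Q$ bounds a disk $\Delta_0\subseteq\Delta(C')$ whose interior contains no vertex of $B_x$ but at least one face of $\Pi$. Set $C'':=P\cup Q'$. Then $C''\subseteq H\cup C$ (as $P\subseteq B_x\subseteq H\cup C$), and $\Delta(C'')=\Delta(C')\setminus\overline{\Delta_0}$ has $x$ strictly inside (it lies in $B_x$, avoids the interior of $\Delta_0$, and $P$ can be taken not through $x$) and $y$ outside, while the number of $\Pi$-faces inside $\Delta(C'')$ is strictly less than inside $\Delta(C')$. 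The hard part will be to confirm that $C''$ is still $(x,y)$-separating: since $x$ and $y$ lie in distinct disks of $C''$, this reduces to checking that the $C''$-bridges of $x$ and of $y$ lie in one component of $O(G,C'')$, which I would establish by transporting the overlap-chain $B_x^{C'}=R_0\sim R_1\sim\dots\sim R_t=B_y^{C'}$ across the reroute while tracking how the $C'$-bridges — especially those attaching in the interior of $Q$ — coalesce into $C''$-bridges. This bookkeeping, together with the degenerate configurations ($Q$ of length one, $x$ forced onto every candidate $P$, or $B_x$ having a cutvertex separating $a_1$ from $a_2$, dealt with by rerouting across another arc $(a_i,a_{i+1})$ or through a neighbouring face), is the step I expect to be delicate. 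Once $C''$ is confirmed admissible, it contradicts the minimality of $C'$, so $B_x^{C'}\sim B_y^{C'}$ and the lemma follows with this $C'$.
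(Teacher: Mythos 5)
The step you yourself flag as the delicate one---that the rerouted cycle $C''=P\cup Q'$ is still $(x,y)$-separating---is not just left unverified: it can genuinely fail, so the appeal to minimality does not close the argument. Here is a concrete instance. Let $C'$ have vertices $a_1,u_1,m,u_2,u_3,n,a_2,w$ in this cyclic order, with $Q$ the arc from $a_1$ to $a_2$ through the $u_i$'s and $Q'$ the arc through $w$. Inside $\Delta(C')$ put $B_x$, consisting of two internally disjoint $a_1$--$a_2$ paths with $x$ an interior vertex of the path on the $Q'$ side, and a bridge $R_2$ attached at $m,n$; outside put $R_1$ attached at $u_3,w$ and $B_y$ attached at $u_1,u_2$. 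This is planar, $O(G,C')$ is the path $B_x\sim R_1\sim R_2\sim B_y$ (so $C'$ is $(x,y)$-separating with $d=3$ and $B_x,B_y$ not overlapping), and all attachments of $B_y$ lie on $Q$, which is exactly your situation. Your reroute replaces $Q$ by the $a_1$--$a_2$ path $P$ of $B_x$ bordering $Q$. But then the interior of $Q$ together with $R_1$, $R_2$ and $B_y$ coalesces into a single $C''$-bridge with attachment set $\{a_1,a_2,w\}$, while the remnant of $B_x$ containing $x$ attaches only at $a_1,a_2$; these two bridges do not overlap and there are no others, so $O(G,C'')$ is disconnected and $C''$ is not $(x,y)$-separating. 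It is therefore not an admissible competitor for your face-count minimization, and no contradiction with the choice of $C'$ arises.

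The underlying problem is the choice of where to reroute: cutting along the $x$-bridge itself can swallow the entire overlap chain from $B_x$ to $B_y$ into one new bridge that no longer overlaps what is left of $B_x$. The paper avoids this by minimizing a different quantity---the distance $d$ between the $x$- and $y$-bridges in $O(G,C')$ over admissible cycles $C'\subseteq H\cup C$---and, when $d>1$, rerouting through the bridge $B_2$ at distance $2$ from $B_x$ along a shortest chain (which lies on the $x$-side, so the new cycle stays in $H\cup C$). That reroute absorbs the neighbours $B_1$ and $B_3$ of $B_2$ into one new bridge and strictly decreases $d$, which is exactly what the induction needs. If you want to keep your geometric face-count potential, you would still have to reroute around such an intermediate bridge rather than around $B_x$, and then re-verify separation---at which point you have essentially reconstructed the paper's proof. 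The secondary issues you mention (cut vertices of $B_x$, $x$ lying on every candidate $P$, the cleanup of other overlap components) are real but minor by comparison.
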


\begin{proof}
  Pick $C'$ to be an $(x, y)$-separating cycle in $G$ such that $C' \subseteq H \cup C$ and that the distance in $O(G, C')$ of the $C'$-bridge $B_x$ containing $x$
  and the $C'$-bridge $B_y$ containing $y$ is minimum.
  Let $H'$ be the $x$-side of $C'$ and note that $H' \subseteq H$.

  Since $C'$ is $(x, y)$-separating, $B_x$ and $B_y$ have odd distance $d$ in $O(G, C')$.
  If $d = 1$, then $B_x$ and $B_y$ overlap. Hence we may assume that $d > 1$.
  Let $B_1, B_2$, and $B_3$ be the $C'$-bridges at distance 1, 2, and 3, respectively, from $B_x$ on a shortest path from $B_x$ to $B_y$ in $O(G, C')$.
  Since $B_2$ and $B_x$ do not overlap, the cycle $C'$ can be decomposed into two segments $Q_1$ and $Q_2$ with ends $v_1$ and $v_2$ such that
  $Q_1$ contains all attachments of $B_x$ and $Q_2$ contains all attachments of $B_2$.
  Furthermore, we can assume that $v_1$ and $v_2$ are attachments of $B_2$.
  Let $P$ be a path in $B_2$ connecting $v_1, v_2$ and let $C''$ be the cycle $Q_1 \cup P$.
  Let $B$ be a $C'$-bridge. If $B$ attaches to the interior of $Q_2$, then $B$ is a subgraph of a single $C''$-bridge $B_0$ containing $Q_2$.
  Note that this is the case for $B_1$ and $B_3$ since they $C'$-overlap with $B_2$.
  If $B$ does not attach to the interior of $Q_2$ it has the same attachments on $C''$ as on $C'$.
  Since $B_1$ only attaches to $Q_1$, we obtain that $B_1$ overlaps with $B_0$.
  It is not hard to see that $B_1$ and the $C''$-bridge containing $y$ have distance at most $d-2$ in $O(G, C'')$.
  Since $C'' \subseteq H' \cup C'$, we conclude that $C'' \subseteq H \cup C$.
  This contradicts the choice of $C'$.
\end{proof}

If $G \in \Gcxy$, then a \df{pre-K-graph} in $G$ is a subgraph of $G$ homeomorphic to either $K_4$ or $K_{2,3}$ that is a K-graph in $G\+$.
Separating cycles allow us to construct pre-K-graphs on each side of the cycle.

\begin{lemma}
\label{lm-x-k-graph}
  Let $C$ be an $(x, y)$-separating cycle in a planar graph $G \in \Gcxy$
  and let $B_x$ and $B_y$ be overlapping $C$-bridges containing $x$ and $y$, respectively.
  Then $G$ contains a pre-K-graph in $C \cup B_x$.
\end{lemma}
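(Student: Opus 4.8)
The plan is to exhibit explicitly a subgraph of $C \cup B_x$ homeomorphic to $K_4$ or $K_{2,3}$, together with a bridge of it that attaches to all its branch vertices (or open branches), so that it qualifies as a K-graph in $G^+$. Since $B_x$ and $B_y$ overlap, either they have three common attachments on $C$, or they skew-overlap; I would handle these two cases. In the skew-overlap case, there are vertices $v_1, v_3 \in V(C)$ that are attachments of $B_x$ and $v_2, v_4 \in V(C)$ that are attachments of $B_y$, appearing in the cyclic order $v_1, v_2, v_3, v_4$ on $C$. The cycle $C$ together with an internal $B_x$-path joining $v_1$ and $v_3$ forms (a subdivision of) a theta-graph, i.e.\ a $K_{2,3}$-subdivision whose three branches are the two arcs of $C$ between $v_1$ and $v_3$ and the $B_x$-path; call this $L$. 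Its three open branches are met, respectively, by $v_2$ (which lies on one $C$-arc), by $v_4$ (which lies on the other $C$-arc), and we still need a third attachment — but note $x$ itself lies in $B_x^\circ$, hence off $L$, and $B_x$ is connected, so $x$ reaches the $B_x$-path; more carefully, I would choose the internal $B_x$-path so that the component of $B_x - V(L)$ containing $x$ (or simply $B_x$ together with its attachments) forms an $L$-bridge attaching to all three open branches. The key point is that $x$ and $y$ lie in $C$-bridges on opposite sides, so after passing to $G^+$ the edge $xy$ guarantees that a bridge of $L$ reaches an interior vertex of the $B_x$-path.

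Concretely, in $G^+$ consider the subgraph $C \cup B_x \cup \{xy\}$. The vertex $y$ is a degree-$\le 1$ pendant here (its only neighbor inside this subgraph is $x$), so I can discard it and work with $C \cup B_x$, but I will use the fact that $x$ has a ``free'' attachment point to $B_y$'s side. The real role of ``$B_x$ and $B_y$ overlap'' is to force $B_x$ to have attachments on $C$ in a configuration that is \emph{not} confinable to a single face of a planar re-embedding — that is exactly what makes the resulting theta-subgraph a K-graph. So the argument is: pick an internally disjoint pair of $B_x$-paths realizing two of the attachments (say $v_1v_3$ and $v_1v_2$ if three common attachments exist, or $v_1v_3$ in the skew case); take $L$ to be the union of $C$ with one such path; verify $L \cong K_{2,3}$ (or $K_4$ when three $B_x$-paths through a common interior vertex are available); and then verify that $B_x$ (minus $V(L)$, plus all incident edges) is a single $L$-bridge attaching to all open branches of $L$, using connectivity of $B_x^\circ$ and the placement of the attachments $v_1, \dots, v_4$ established by the overlap.

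The main obstacle I anticipate is the bookkeeping in choosing the $B_x$-paths so that (a) the resulting $L$ is genuinely homeomorphic to $K_4$ or $K_{2,3}$ (no degenerate coincidences of branch vertices), and (b) the leftover part of $B_x$ really does form \emph{one} bridge hitting \emph{all} the required open branches, rather than splitting into several small bridges each confined to one branch. Handling (b) likely requires invoking that $B_x^\circ$ is connected and that $x \in B_x^\circ$, together with the fact that $B_x$ attaches at $v_1$ (or $v_3$) which lies on an interior vertex of one of the branches of $L$ — so the ``obvious'' pendant structure cannot be planar-confined. If necessary I would also use Lemma~\ref{lm-close-cycles} implicitly (it is what supplies the overlapping bridges in the intended application, but here they are given). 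I expect the skew-overlap case to be the cleaner one and the ``three common attachments'' case to require slightly more care, since there one wants a $K_4$-type K-graph and must check that three internally disjoint $B_x$-paths to the three common attachments exist (which follows from $B_x$ being a bridge, hence 2-connected to $C$ through its attachments after adding a branch vertex in $B_x^\circ$).
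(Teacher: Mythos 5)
Your overall strategy is the same as the paper's: explicitly construct a $K_{2,3}$- or $K_4$-subdivision $L$ inside $C \cup B_x$ and verify that an $L$-bridge in $G^+$ attaches to all open branches or branch vertices. In the skew-overlap case, your theta graph $L = C \cup P$ (with $P$ a $B_x$-path joining two attachments $u_1, v_1$ of $B_x$) is exactly the paper's construction. However, two points in your plan are genuine gaps.

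First, the bridge that certifies $L$ as a K-graph in $G^+$ is the $L$-bridge containing $y$, and it is \emph{not} the ``leftover part of $B_x$'': it is the union of $B_y$, the edge $xy$, and the part of $B_x^\circ$ reaching $x$. The overlap hypothesis is what makes $B_y$ attach at $u_2$ and $v_2$ on the two distinct $C$-arcs of $L$ (it is $B_y$'s attachments, not $B_x$'s, that matter here), and connectivity of $B_x^\circ$ makes $x$ reach the interior of $P$. So you cannot discard $y$; if you restrict to $C \cup B_x \cup \{xy\}$ as you propose, the bridge through $x$ never touches the $C$-arcs and the K-graph condition fails. Your worry~(b) about whether ``the leftover part of $B_x$'' forms one bridge is aimed at the wrong bridge.

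Second, the three-common-attachments case does not always yield a $K_4$-type K-graph, so aiming for one there leaves a hole. Take internally disjoint paths $P_1, P_2, P_3$ in $B_x$ from a common vertex $u \in B_x^\circ$ to the common attachments $u_1, u_2, u_3$ (this $Y$-structure exists because $B_x^\circ$ is connected and $u_1, u_2, u_3$ have no $B_x$-edges among them). Let $v$ be where a path from $x$ in $B_x - C$ first meets $P_1 \cup P_2 \cup P_3$. If $v = u$, then $L = C \cup P_1 \cup P_2 \cup P_3 \cong K_4$ works, since the bridge through $B_y$ and $xy$ hits $u_1, u_2, u_3$ and (via $x$) $u$. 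But if $v$ is an interior vertex of some $P_i$, say $P_1$, the bridge misses the branch vertex $u$ of that $K_4$, and you must pass to a $K_{2,3}$ by dropping one arc of $C$: with $C'$ the arc of $C$ from $u_2$ through $u_1$ to $u_3$, the graph $C' \cup P_1 \cup P_2 \cup P_3 \cong K_{2,3}$ (branch vertices $u$ and $u_1$) is the pre-K-graph. This case split, which the paper handles explicitly, is the part your plan does not anticipate.
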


\begin{proof}
  Assume first that $B_x$ and $B_y$ skew-overlap and let $u_1, v_1$ be attachments of $B_x$
  and $u_2, v_2$ be attachments of $B_y$ such that $u_1, u_2, v_1, v_2$ appear on $C$ in this order.
  Let $P$ be a path connecting $u_1$ and $v_1$ in $B_x$. We see that $P \cup C$ is a pre-K-graph in $G$.

  Assume now that $B_x$ and $B_y$ do not skew-overlap. Hence $B_x$ and $B_y$ have three attachments $u_1, u_2, u_3$ in common.
  Let $P_1, P_2, P_3$ be internally disjoint paths in $B_x$ with one common end $u$ and with the other ends being $u_1, u_2, u_3$, respectively.
  Let $P$ be a (possibly trivial) path connecting $x$ and $P_1 \cup P_2 \cup P_3$ in $B_x - C$ and let $v$ be the other end of $P$.
  If $v = u$, then $C \cup P_1 \cup P_2 \cup P_3$ is a pre-K-graph in $G$.
  If $v \in V(P_1)\setminus \{u\}$, then let $C'$ be the segment of $C$ with ends $u_2$ and $u_3$ that contains $u_1$.
  We have that $C' \cup P_1 \cup P_2 \cup P_3$ is a pre-K-graph in $G$ homeomorphic to $K_{2,3}$ with branch vertices $u$ and $u_1$.
  We construct a pre-K-graph similarly if $v \in (V(P_2) \cup V(P_3))\setminus \{u\}$.
\end{proof}

We have the following corollary.

\begin{corollary}
\label{cr-two-cycles}
  Let $G$ be a planar graph in $\Gcxy$. If $\dd_G(x,y) \ge 2$,
  then $G$ contains two disjoint pre-K-graphs.
\end{corollary}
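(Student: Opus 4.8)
The plan is to carve out one pre-K-graph near $x$ and another near $y$, living in disjoint regions of a planar embedding. First I would apply Lemma~\ref{lm-separating-cycles}: since $G$ is planar and $\dd_G(x,y)\ge 2$, there are two \emph{disjoint} $(x,y)$-separating cycles $C_1$ and $C_2$. Fix a planar embedding of $G$ and view it on the sphere. Then $C_1$ bounds two closed disks $D_1^x\ni x$ and $D_1^y\ni y$, and $C_2$ bounds $D_2^x\ni x$ and $D_2^y\ni y$. Since $C_2$ is a Jordan curve disjoint from $C_1$, it lies in the interior of $D_1^x$ or of $D_1^y$; swapping the labels $C_1\leftrightarrow C_2$ if necessary, I may assume $C_2\subseteq\mathrm{int}\,D_1^y$, equivalently $C_1\subseteq\mathrm{int}\,D_2^x$. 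A short Jordan-curve argument then gives $D_1^x\subseteq\mathrm{int}\,D_2^x$, hence $D_1^x\cap D_2^y=\emptyset$.

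Next I would build $L_1$ near $x$ as follows. Apply Lemma~\ref{lm-close-cycles} to $C_1$ with $H$ the $x$-side of $C_1$, obtaining an $(x,y)$-separating cycle $C_1'\subseteq H\cup C_1$ whose $C_1'$-bridges $B_x$ (containing $x$) and $B_y$ (containing $y$) overlap; then Lemma~\ref{lm-x-k-graph} yields a pre-K-graph $L_1\subseteq C_1'\cup B_x$. Everything here stays in $D_1^x$: in a planar embedding overlapping bridges lie on opposite sides of a cycle, so by induction along overlap-paths the $x$-side of $C_1$ is drawn inside $D_1^x$, giving $C_1'\subseteq D_1^x$; and since $C_1'$ is again $(x,y)$-separating while $\mathrm{int}\,D_1^y$ is connected, contains $y$, and misses $C_1'$, the closed subdisk of $C_1'$ on the $x$-side --- the one in which $B_x$ is drawn --- is contained in $D_1^x$. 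Hence $L_1\subseteq D_1^x$. The mirror argument applied to $C_2$, using the $y$-side of $C_2$ in place of the $x$-side (Lemmas~\ref{lm-close-cycles} and~\ref{lm-x-k-graph} are symmetric in $x$ and $y$, since $G\in\Gcxy$), produces a pre-K-graph $L_2\subseteq D_2^y$. As $D_1^x\cap D_2^y=\emptyset$, the pre-K-graphs $L_1$ and $L_2$ are disjoint, as required.

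The two invocations of Lemmas~\ref{lm-close-cycles} and~\ref{lm-x-k-graph} are routine once the setup is in place; the delicate part is the topological bookkeeping in the middle step. In particular, one must pick the \emph{nesting orientation} correctly --- shrinking $C_1$ toward $x$ but $C_2$ toward $y$, not both toward the same terminal --- and then verify that neither shrunk cycle $C_i'$ nor the associated bridge $B_x$ (resp.\ $B_y$) can leak out of the closed disk it is meant to occupy. That verification is exactly where one uses that $C_i'$ is still $(x,y)$-separating, so that $x$ and $y$ remain on opposite sides of it.
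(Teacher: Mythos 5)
Your proof is correct and follows essentially the same route as the paper's: take two disjoint $(x,y)$-separating cycles, choose the labelling so that the $x$-side of $C_1$ and the $y$-side of $C_2$ are disjoint, then apply Lemma~\ref{lm-close-cycles} and Lemma~\ref{lm-x-k-graph} on each side (using the $x$--$y$ symmetry for the second application). The only difference is that you spell out the Jordan-curve/nesting bookkeeping that the paper leaves implicit in the sentence ``Let $C_1$ and $C_2$ be such that the $x$-side of $C_1$ and the $y$-side of $C_2$ are disjoint.''
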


\begin{proof}
  By Lemma~\ref{lm-separating-cycles}, there are two disjoint $(x,y)$-separating cycles $C_1$ and $C_2$ in $G$.
  Let $C_1$ and $C_2$ be such that the $x$-side of $C_1$ and the $y$-side of $C_2$ are disjoint.
  By Lemma~\ref{lm-close-cycles}, there is an $(x,y)$-separating cycle $C_1'$ such that the $C_1'$-bridges containing $x$ and $y$ overlap.
  Similarly, there is an $(x,y)$-separating cycle $C_2'$ such that the $C_2'$-bridges containing $x$ and $y$ overlap.
  Furthermore, we can pick $C_1'$ and $C_2'$ so that $C_1'$ is contained in the $x$-side of $C_1$ and $C_2'$ in the $y$-side of $C_2$.
  Therefore, $C_1'$ and $C_2'$ are disjoint. Let $B_x$ be the $C_1'$-bridge containing $x$ and let $B_y$ be the $C_2'$-bridge containing $y$.
  By Lemma~\ref{lm-x-k-graph}, the graph $G$ contains a pre-K-graph in $C_1' \cup B_x$ and a pre-K-graph in $C_2' \cup B_y$.
  Thus, $G$ contains two disjoint pre-K-graphs.
\end{proof}

The following lemma relates the face-distance of $x$ and $y$ in a planar graph $G \in \Gcxy$ to the genus of $G\+$.

\begin{lemma}
  \label{lm-dd-xy-2}
  Let $G \in \Gcxy$ be a planar graph and $\dd = \dd_G(x,y)$. If $\dd\le2$, then $\egp(G)=\dd$. If $\dd\ge3$, then $\egp(G)=2$.
\end{lemma}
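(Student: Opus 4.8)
The claim links a planar combinatorial quantity, the face-distance $\dd = \dd_G(x,y)$, to $\egp(G) = \eg(G\+)$. The plan is to prove the equality in two halves: an upper bound $\egp(G) \le \min\{\dd, 2\}$ and a matching lower bound $\egp(G) \ge \min\{\dd, 2\}$, so that the two regimes $\dd \le 2$ and $\dd \ge 3$ fall out. Since $G$ is planar and $G\+$ differs from $G$ only by the single edge $xy$, adding one edge to a planar graph raises the Euler genus by at most $1$ if the edge joins two vertices on a common face and by at most $2$ in general (one can always route $xy$ through at most two handles/crosscaps, or more cleanly: $\egp(G) - \eg(G) \in \{0,1,2\}$ was already noted in the preliminaries, and here $\eg(G)=0$). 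Thus $\egp(G) \le 2$ always, which already handles the upper bound when $\dd \ge 2$.

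For the finer upper bound when $\dd \le 2$: if $\dd = 0$ then by definition $G + xy = G\+$ is planar, so $\egp(G) = 0$; if $\dd = 1$, fix a planar embedding $\Pi$ of $G$ realizing the face-distance and a sequence $x, f_0, u_1, f_1, y$ witnessing $\dd_\Pi(x,y)=1$, i.e.\ a common incidence chain of length one — concretely $x$ and $y$ lie on faces $f_0, f_1$ sharing a vertex $u_1$. Then one crosscap (equivalently, one elementary subdivision of the embedding near $u_1$ merging $f_0$ and $f_1$) suffices to route the edge $xy$, giving $\egp(G) \le 1$; combined with $\egp(G) \ge 1$ from the lower bound below (since $\dd \ge 1$ means $G+xy$ is nonplanar), we get $\egp(G) = 1 = \dd$. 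The case $\dd = 2$ is similar: $\dd = 2$ gives $\egp(G) \le 2$ from the general bound, and the lower bound gives $\egp(G) \ge 2$, so $\egp(G) = 2 = \dd$. (If $\dd \ge 3$ the same lower bound gives $\egp(G) \ge 2$ and the general upper bound gives $\egp(G) \le 2$, hence $\egp(G)=2$.)

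The crux is therefore the \emph{lower bound}: $\dd \ge 1 \Rightarrow \egp(G) \ge 1$ (trivial, as $\dd \ge 1$ means $G+xy$ nonplanar), and $\dd \ge 2 \Rightarrow \egp(G) \ge 2$. For the latter I would invoke Corollary~\ref{cr-two-cycles}: since $G$ is planar, $G \in \Gcxy$, and $\dd_G(x,y) \ge 2$, the graph $G$ contains two disjoint pre-K-graphs $L_1, L_2$. By definition of a pre-K-graph, each $L_i$ is a K-graph in $G\+$. Two disjoint K-graphs in $G\+$ force $\eg(G\+) \ge 2$ by Lemma~\ref{lm-two-k-graphs}(i) (equivalently Lemmas~\ref{lm-fund-cycles} and~\ref{lm-noncontractible}). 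Hence $\egp(G) \ge 2$, as required.

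I expect the main obstacle to be the careful justification of the upper-bound routing arguments — that one can actually add the edge $xy$ to a planar embedding using only one crosscap when $\dd \le 1$ and only two when $2 \le \dd$. The cleanest route is to avoid hand-drawn surgery and instead cite the fact, recorded in the excerpt right after the definition of $\egp$, that $\egp(G) - \eg(G) \in \{0,1,2\}$ (so $\egp(G) \le 2$ since $\eg(G)=0$), together with the elementary observations that $\dd = 0 \iff G+xy$ planar $\iff \egp(G) = 0$ (from the definition of face-distance) and that $\dd = 1$ admits an embedding of $G\+$ of Euler genus $1$ obtained by cutting the embedding of $G$ along an arc through the shared vertex $u_1$ of the two faces carrying $x$ and $y$ and re-attaching with a crosscap, after which the added edge $xy$ lies in a single face. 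Assembling these pieces with the two regimes of $\dd$ then completes the proof.
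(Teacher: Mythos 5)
Your proposal is correct and follows essentially the same route as the paper: the case $\dd=0$ is immediate, the case $\dd=1$ is handled by the same crosscap surgery near the shared vertex of the two faces (which the paper carries out explicitly via rotation systems and signatures), and the lower bound for $\dd\ge2$ comes from Corollary~\ref{cr-two-cycles} plus Lemma~\ref{lm-two-k-graphs}(i), exactly as in the paper, with the general bound $\egp(G)\le\eg(G)+2=2$ closing the argument.
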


\begin{proof}
  Suppose first that there exists a planar embedding $\Pi$ of $G$ where $\dd_\Pi(x, y) \le 1$.
  If $\dd_\Pi(x, y) = 0$, then $G\+$ is planar and $\egp(G) = 0$.
  Suppose then that $\dd_\Pi(x, y) = 1$. Then there exists a vertex $v \in V(G)$ and two $\Pi$-faces $f_1, f_2$ incident with $v$
  such that $f_1$ is incident with $x$ and $f_2$ is incident with $y$.
  Let $e_1ve_2$ be a $\Pi$-angle of $f_1$ and $e_3ve_4$ a $\Pi$-angle of $f_2$.
  We can write the local rotation around $v$ as $e_2,S_1,e_3,e_4,S_2,e_1$.
  Let us construct the following embedding $\Pi'$ of $G\+$ in the projective plane.
  Let $\Pi'(u) = \Pi(u)$ for each $u \in V(G) \sm \{x, y, v\}$.
  To obtain $\Pi'(x)$, insert the edge $xy$ into the local rotation $\Pi(x)$ of $x$ between the edges $e_1', e_2'$ where $e_1', x, e_2'$ is a $\Pi$-angle of $f_1$.
  The local rotation $\Pi'(y)$ of $y$ is obtained analogously.
  Let $\Pi'(v) = e_2, S_2,e_3,e_1,S_2^{\rm R},e_4$, where $S_2^{\rm R}$ is the reverse of $S_2$.
  Let $\Pi'(e) = -1$, if $e \in \{xy, e_1, e_4\} \cup S_2$, and $\Pi'(e) = 1$ otherwise.
  We leave it to the reader to check that $\Pi'$ is indeed an embedding of $G\+$ into the projective plane.
  Thus $\egp(G) \le 1$ as claimed.

  Assume now that $\dd_G(x,y) \ge 2$.
  By Corollary~\ref{cr-two-cycles}, $G\+$ contains two disjoint K-graphs.
  By Lemma~\ref{lm-two-k-graphs}(i), $\eg(G\+) = \egp(G) \ge 2$.
  However, adding an edge increases Euler genus by at most 2, so $\egp(G)=2$.
\end{proof}

A pre-K-graph $L$ in a planar graph $G \in \Gcxy$ is a \df{$z$-K-graph} for a terminal $z \in \{x, y\}$
if $z \in V(L)$ and, if $L\cong K_4$, then $z$ is a branch vertex of $L$,
and, if $L\cong K_{2,3}$, then $z$ lies on an open branch of $L$.
The \df{boundary} of $L$ is the cycle of $L$ that consists of all branches of $L$ that are not incident with $z$.
All vertices and edges of $L$ that do not lie on the boundary of $L$ are said to be in the \df{interior} of $L$.
A graph $G \in \Gcxy$ contains \df{disjoint $xy$-K-graphs} if it contains
an $x$-K-graph and a $y$-K-graph that are disjoint.
We conclude this section by showing that each graph in $\S_1$ contains disjoint $xy$-K-graphs.

\begin{lemma}
  \label{lm-disjoint-xy-k-graphs}
  Each graph in $\S_1$ contains disjoint $xy$-K-graphs.
\end{lemma}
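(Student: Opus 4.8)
The plan is to reduce the statement to the planar case already handled in Section~6 by using the cascade axioms to locate a vertex or edge whose deletion or contraction makes the graph planar while keeping $x$ and $y$ far apart in the face-distance metric.

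Let $G \in \S_1$. By definition of $\S_k$ we have $\egp(G) = 2$ and $\eg(G) = 1$. By axiom~(C3), $G \notin \Cc(\egp)$, so there is a minor-operation $\mu \in \M(G)$ with $\egp(\mu G) = 2$; equivalently, by axiom~(C1), this same $\mu$ must lie in $\dc1{\eg}$, so $\eg(\mu G) = 0$, i.e. $\mu G$ is planar. Symmetrically, by axiom~(C2) and~(C1) there is $\nu \in \M(G)$ with $\eg(\nu G) = 1$ and $\egp(\nu G) = 1$. I would first analyze the operation $\mu$: since $\mu G$ is planar but $\egp(\mu G) = \eg((\mu G)^+) = 2$, Lemma~\ref{lm-dd-xy-2} (applied to $\mu G \in \Gcxy$, noting $xy \notin E(\mu G)$ because $G \in \Gcxy$ and $\mu$ cannot create the edge $xy$ if it is a deletion, and if $\mu$ is a contraction one checks the terminal-identification convention keeps us in $\Gcxy$) gives $\dd_{\mu G}(x,y) \ge 2$. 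Now Corollary~\ref{cr-two-cycles} produces two disjoint pre-K-graphs in $\mu G$, and tracing through the proof of that corollary — which builds, via Lemma~\ref{lm-x-k-graph}, a pre-K-graph inside $C_1' \cup B_x$ (with $x$ in the bridge $B_x$) and another inside $C_2' \cup B_y$ — one checks that the constructed pre-K-graphs are in fact an $x$-K-graph and a $y$-K-graph in the sense of the definition preceding the lemma. This requires revisiting the case analysis in the proof of Lemma~\ref{lm-x-k-graph}: when $B_x,B_y$ skew-overlap, $x$ sits in the bridge off the cycle and a path through $x$ can be routed so that $x$ lies on an open branch of the resulting $K_{2,3}$ (or is made a branch vertex of a $K_4$); when they share three attachments, the sub-case split on where the path $P$ from $x$ meets $P_1\cup P_2\cup P_3$ already places $x$ either as a branch vertex ($v=u$) or on an open branch, which is exactly what the $z$-K-graph definition demands.

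Having obtained disjoint $xy$-K-graphs in $\mu G$, the final step is to lift them back to $G$. Since $\mu G$ is obtained from $G$ by a single edge deletion or contraction, every subgraph of $\mu G$ homeomorphic to $K_4$ or $K_{2,3}$ corresponds to a subgraph of $G$ homeomorphic to the same graph (a contracted edge is re-expanded into its two endpoints, a deleted edge is simply absent), and the $x$-ness / $y$-ness of the two K-graphs is preserved because the terminals are untouched by $\mu$ or, in the contraction case, inherited correctly. Disjointness is preserved under re-expansion. One also must verify these lifted subgraphs remain K-graphs \emph{as pre-K-graphs in $G$}, i.e. that there is a principal bridge in $G^+$: but a pre-K-graph in $\mu G$ has such a bridge in $(\mu G)^+$, and an $L$-bridge attaching to all branch vertices (resp. open branches) in $(\mu G)^+$ expands to one with the same attachment pattern in $G^+$. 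Hence $G$ contains disjoint $xy$-K-graphs, as required.

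The main obstacle I anticipate is the bookkeeping in the second paragraph: certifying that the generic pre-K-graphs delivered by Corollary~\ref{cr-two-cycles} can be chosen to be honest $x$-K-graphs and $y$-K-graphs, with $x$ (resp. $y$) in the prescribed position — branch vertex of a $K_4$ or interior vertex of an open branch of a $K_{2,3}$ — rather than merely lying somewhere in the subgraph. This is not automatic from the cited corollary as stated; it needs a careful re-reading of the constructions in Lemmas~\ref{lm-x-k-graph} and~\ref{lm-close-cycles} to confirm that the path through $x$ (or $y$) can always be incorporated into the homeomorph in the right way, and that one never accidentally lands in the degenerate situation where $x$ coincides with a cut vertex that would spoil the branch-vertex/open-branch requirement. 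A secondary, more routine check is that the minor-operation $\mu$ cannot be the trivial one that fails to change the graph and that the $\Gcxy$-membership of $\mu G$ genuinely holds in the edge-contraction case.
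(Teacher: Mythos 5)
Your opening moves coincide with the paper's: use (C1) and (C3) to find $\mu$ with $\mu G$ planar and $\egp(\mu G)=2$, then invoke Lemma~\ref{lm-dd-xy-2} and Corollary~\ref{cr-two-cycles}. But the step you yourself flag as ``the main obstacle'' is a genuine gap, not a bookkeeping issue, and re-reading Lemma~\ref{lm-x-k-graph} will not close it. The constructions there simply do not route through $x$: in the skew-overlap case the path $P$ joining $u_1$ and $v_1$ in $B_x$ is arbitrary (and in a general bridge no $u_1$--$v_1$ path through $x$ need exist), and in the three-attachment case the path $P$ from $x$ is used only to locate the attachment point $v$ and is then discarded from the final pre-K-graph. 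Indeed the claim you are hoping for is false for general planar graphs with $\dd(x,y)\ge 2$: if $x$ has degree $1$, it cannot lie on any subdivision of $K_4$ or $K_{2,3}$ at all, so no $x$-K-graph exists. The cascade axioms are indispensable here, and the paper uses them in a separate contradiction argument that your proposal has no substitute for: assuming $x\notin L_x$, contract an edge $xv$ into the bridge $B_x$; the two pre-K-graphs survive, so $\egp(G/xv)\ge 2$ and (C1) forces $G/xv$ to be planar; since $B_x/xv$ has the same attachments on $C_1'$ and $G$ is nonplanar, $C_1'\cup B_x$ contains a Kuratowski subgraph $K$; contracting a further edge off $C_1'$ then yields a nonplanar minor containing $K$ and a disjoint K-graph $L_y$, contradicting (C1) via Lemma~\ref{lm-two-k-graphs}(ii).

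A secondary divergence: you construct the pre-K-graphs in $\mu G$ and lift them to $G$, whereas the paper transfers the face-distance bound to $G$ ($\dd_G(x,y)\ge 2$) and works in $G$ directly. Your lifting is unproblematic for deletions, but in the contraction case the argument that ``the $x$-ness is inherited correctly'' is exactly where trouble hides: if $\mu$ contracts an edge incident with $x$, un-contracting can move the distinguished vertex of the K-graph from $x$ to the other endpoint, or leave $x$ with the wrong degree to be a branch vertex or to sit on an open branch. Since the main gap already requires importing the paper's (C1)-based contradiction argument, and that argument is naturally carried out in $G$ itself, you should restructure the proof accordingly rather than patch the lift.
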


\begin{proof}
  Let $G \in \S_1$.
  By (C1) and (C3) from the definition of cascades, there is a minor-operation $\mu \in M(G)$ such that $\mu G$ is planar but $\egp(\mu G) = 2$.
  By Lemma~\ref{lm-dd-xy-2}, $\dd_{\mu G}(x,y) \ge 2$.
  Minor operations cannot increase the face-distance. Thus, $\dd_G(x,y)\ge2$.
  By Corollary~\ref{cr-two-cycles} and its proof, $G$ contains disjoint $(x,y)$-separating cycles $C_1', C_2'$ and disjoint pre-K-graphs $L_x\subseteq C_1'\cup B_x$ and $L_y\subseteq C_2'\cup B_y$ (where $B_x$ is the $C_1'$-bridge containing $x$ and $B_y$ is the $C_2'$-bridge containing $y$ such that $B_x \cap B_y = \emptyset$).

  Suppose that $x \not\in L_x$. Then $x$ has a neighbor $v\in V(B_x)\setminus V(C_1')$.
  Consider contracting the edge $xv$.
  Since $L_x$ and $L_y$ are disjoint pre-K-graphs in $G/xv$, we have that $\egp(G/xv) \ge 2$.
  By (C1), $G/xv$ is planar.
  Since $B_x /xv$ has the same attachments on $C_1'$ as $B_x$ and $G$ is nonplanar,
  we conclude that $C_1' \cup B_x$ is nonplanar and thus contains a Kuratowski subgraph $K$.
  Let $e$ be an edge of $G$ joining a vertex on $C_1'$ with a vertex that is not in $C_1'\cup B_x$. Observe that $L_y$ is a pre-K-graph in $G/e$.
  In the graph $G / e$,  $K$ shares at most one vertex with $L_y$.
  By Lemma~\ref{lm-two-k-graphs}(ii), $\egp(G/e) \ge 2$.
  Since $G/e$ contains $K$, $\eg(G/e) \ge 1$, a contradiction with (C1).
  We conclude that $x \in L_x$.
  By symmetry $y \in L_y$.
  Therefore, $G$ contains disjoint $xy$-K-graphs.
\end{proof}


\section{The class $\S_1$}
\label{sc-s1}

Throughout this section we will use the following notation and assumptions.
Let us consider a graph $G \in \S_1$.
By Lemma~\ref{lm-disjoint-xy-k-graphs}, $G$ contains an $x$-K-graph $L_x$ and a $y$-K-graph $L_y$ that are disjoint.
We shall assume that $L_x$ is \df{minimal} in the sense that there is no $x$-K-graph properly contained in $L_x$. Similarly take $L_y$ minimal.
Let $\By$ be the $L_x$-bridge in $G$ that contains $L_y$. Define $\Bx$ similarly.
A \df{base} in $G$ is a subgraph $H$ of $G$ such that $H$ contains $L_x$ and $L_y$ and they are pre-K-graphs in $H$.
In this section, we use the structure obtained in the previous section to construct cascades in $\S_1$
and find their planar bases.

Each graph $G \in \S_1$ has $\egp(G) = 2$, and thus contains a graph $H \in \Cc_1(\egp)$ as a minor. The next lemma shows that $H$ has to be planar.

\begin{lemma}
  \label{lm-c2-egp}
  Let $G \in \Gcxy$ be a cascade in $\S_1$.

  {\rm (a)} If $H\in \Gcxy$ is a proper minor of $G$ and $\egp(H)=2$, then $H$ is a planar graph.

  {\rm (b)} If $H \in \Gcxy$ is a minor of $G$ such that $H \in \Cc_1(\egp)$, then $H$ is planar.
\end{lemma}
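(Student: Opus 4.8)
The plan is to prove (a) first and then derive (b) as an easy consequence. For (a), suppose for contradiction that $H$ is a nonplanar proper minor of $G$ with $\egp(H)=2$. The key is that since $H$ is a proper minor, there is a minor-operation $\mu\in\M(G)$ such that $H$ is a minor of $\mu G$; since $\egp$ is minor-monotone and $\egp(H)=2=\egp(G)$, we get $\egp(\mu G)=2$, so $\mu\notin\dc1\egp$. By property (C1) of cascades, $\M(G)=\dc1\eg\cup\dc1\egp$, hence $\mu\in\dc1\eg$, i.e.\ $\eg(\mu G)\le\eg(G)-1=0$, so $\mu G$ is planar. But $H$ is a minor of $\mu G$ and planarity is minor-closed, so $H$ is planar, contradicting the assumption. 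Thus $H$ is planar.

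For part (b), let $H\in\Cc_1(\egp)$ be a minor of $G$. First I would dispose of the case $H=G$ (as an abstract graph in $\Gcxy$, up to isomorphism): if $H\iso G$ then $G\in\Cc_1(\egp)$, which contradicts property (C3) in the definition of a cascade (a cascade is not $\egp$-critical, hence not in $\Cc(\egp)$). So $H$ is a \emph{proper} minor of $G$, and since $\egp(H)=2$ (because $H\in\Cc_1(\egp)$ means $\egp(H)=1+1=2$), part (a) applies directly and gives that $H$ is planar.

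The main obstacle, or rather the point that needs care, is the ``proper minor'' bookkeeping: I must be sure that a proper minor $H$ of $G$ is obtained from $\mu G$ for some single $\mu\in\M(G)$ (not just from a longer sequence), so that I can invoke (C1) at the first step. This is fine because if $H$ is a proper minor of $G$ then some minor-operation must be applied, and whichever $\mu\in\M(G)$ is applied first yields $\mu G$, of which $H$ is still a minor; the rest of the argument only uses $\egp(\mu G)=\egp(H)=2$ and (C1). The other subtlety in (b) is justifying $H\not\iso G$; this uses only the definition of cascade, specifically (C3), which says $G\notin\Cc(\egp)$, together with $\Cc_1(\egp)\subseteq\Cc(\egp)$, so it is immediate.
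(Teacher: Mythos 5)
Your proof is correct and follows essentially the same route as the paper: find a single $\mu\in\M(G)$ with $H$ a minor of $\mu G$, use minor-monotonicity of $\egp$ to see $\mu\notin\dc1\egp$, invoke (C1) to get $\eg(\mu G)=0$, and conclude planarity of $H$; part (b) reduces to (a) via (C3) exactly as in the paper. No gaps.
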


\begin{proof}
  In case (b), $H$ is a proper minor of $G$ as wellby the properties (C1) and (C3) of cascades. Thus, in both cases, (a) and (b),
  there exists a minor operation $\mu \in \M(G)$ such that $H$ is a minor of $\mu G$.
  Since $\egp(H) = 2$ and $\egp$ is minor-monotone, $\egp(\mu G) \ge \egp(H) = 2$.
  By (C1), $\eg(H) < g(H) = 1$, which means that $H$ is planar.
\end{proof}

\begin{figure}[htb]
  \centering
  \includegraphics[width=0.58\textwidth]{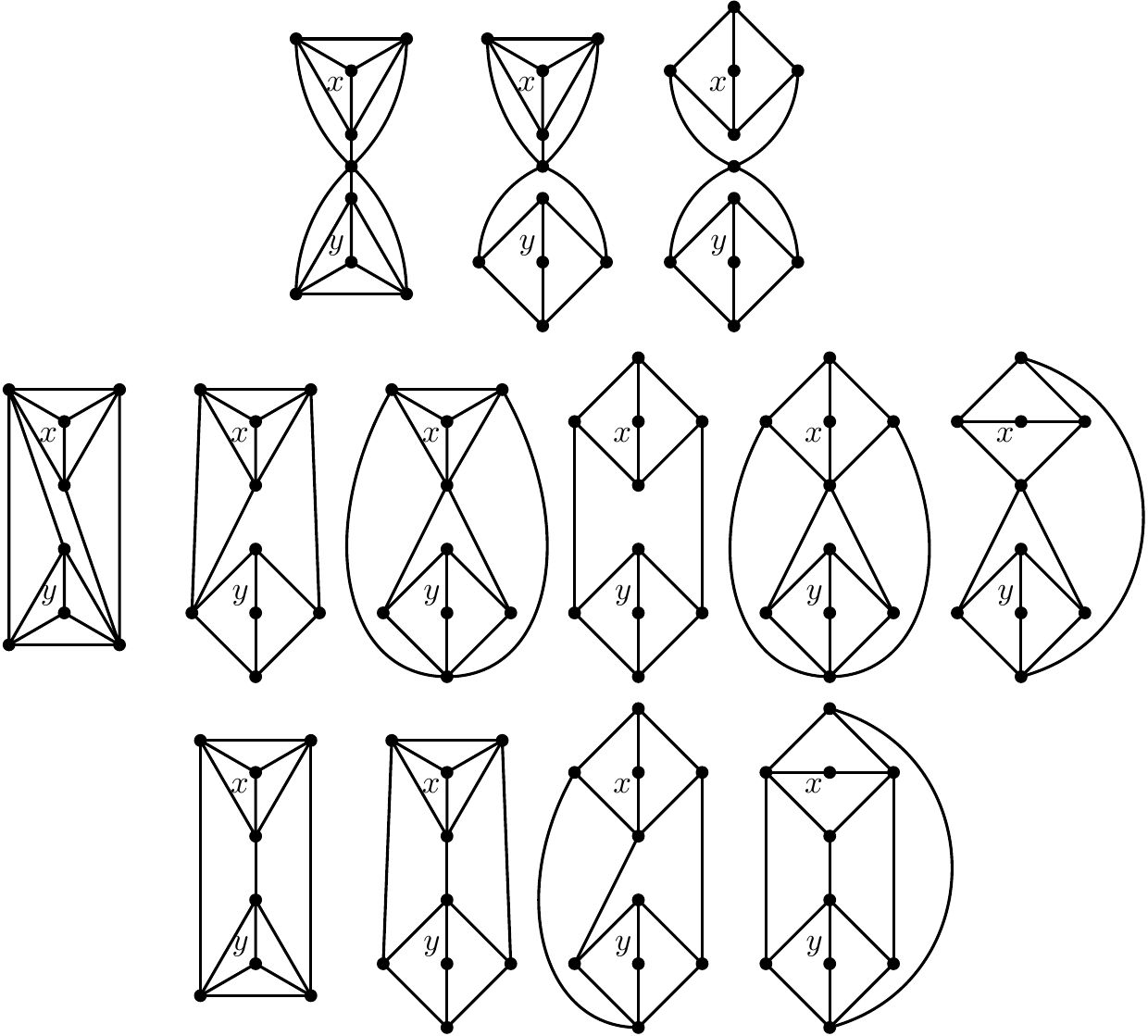}
  \caption{The planar graphs in $\Cc_1(\egp)$.}
  \label{fg-projective}
\end{figure}

Lemma~\ref{lm-c2-egp} combined with (C2) implies that each graph $G \in \S_1$ contains a planar graph $H\in \Cc_1(\egp)$ as a minor. By Lemma \ref{lm-cc-egp-iff}, $\widehat{H}^+$ is either in $\E_1$ (an obstruction for the projective plane) or $\eg(H/xy)=\egp(H)=2$. In the latter case, Lemma \ref{lm-cc-egp} shows that $\widehat{H}^+ \in \E_1^*$ and $\widehat{H/xy} \in \E_1$.
The complete list of planar graphs in $\Cc_1(\egp)$ is depicted in Fig.~\ref{fg-projective}.
The list has been obtained as follows: We start with $\E_1$ which consists of 35 obstructions for the projective plane \cite{archdeacon-1981,glover-1979}. Every planar graph obtained from one of these by removing an edge and using its ends as terminals $x$ and $y$ gives one of the graphs. Next, each of 68 ($=103-35$) graphs $Q\in \E_1^*\setminus \E_1$ (cf. \cite{glover-1979}) is tested to check if the removal of an edge $xy$ yields a planar graph $H\in\Gcxy$ such that $H/xy = Q/xy \in \E_1$.
A simple use of computer then reveals that the resulting planar cases are precisely those depicted in Fig.~\ref{fg-projective}.

\begin{theorem}
\label{thm:C1plus}
The class $\Cc_1(\egp)$ contains precisely 13 planar graphs that are depicted in Fig.~\ref{fg-projective}.
Every cascade in $\S_1$ contains one of these as a minor.
\end{theorem}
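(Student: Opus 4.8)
The statement has two parts. The second part—that every cascade in $\S_1$ contains one of these graphs as a minor—is essentially already established in the text preceding the theorem: by Lemma~\ref{lm-c2-egp}(b) combined with property (C2) of cascades, each $G\in\S_1$ contains a minor $H\in\Cc_1(\egp)$, and $H$ must be planar. So the real content is the first part: the enumeration $|\Cc_1(\egp)\cap\Gcxy^{\text{planar}}| = 13$ and the identification of the 13 graphs in Fig.~\ref{fg-projective}. The plan is to reduce this to a finite, computer-checkable search over the known obstruction sets for the projective plane, using Lemmas~\ref{lm-cc-egp-iff} and~\ref{lm-cc-egp} as the structural backbone.

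First I would invoke Lemma~\ref{lm-cc-egp} to split any planar $H\in\Cc_1(\egp)$ into cases according to whether $\egp(H)=\eg(H)$ or $\egp(H)>\eg(H)$. Since $H$ is planar, $\eg(H)=0$, but $\egp(H)=2$ (as $H\in\Cc_1(\egp)$ means $\egp(H)=2$), so we are always in the case $\egp(H)>\eg(H)$; Lemma~\ref{lm-cc-egp} then gives that either $\widehat{H\+}\in\E$ or ($\widehat{H\+}\in\E^*$ and $\widehat{H/xy}\in\E$). Because $\egp(H)=2$, in fact $\widehat{H\+}\in\E_1$ or ($\widehat{H\+}\in\E_1^*$ and $\widehat{H/xy}\in\E_1$). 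This is exactly the dichotomy the paragraph before the theorem sets up. So the candidate graphs $H$ arise in two families: (i) take $Q\in\E_1$ (the $35$ projective-plane obstructions of Glover–Huneke–Wang, see \cite{archdeacon-1981,glover-1979}), delete an edge $e=uv$ of $Q$ whose removal leaves a planar graph, and set $x=u$, $y=v$; (ii) take $Q\in\E_1^*\setminus\E_1$ (there are $103-35=68$ of these), delete an edge $e$, and check whether the result is planar and whether $Q/e\in\E_1$, in which case set $\{x,y\}$ to be the ends of $e$.

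Next I would argue that every planar $H\in\Cc_1(\egp)$ is obtained this way up to isomorphism in $\Gxy$, so the search is exhaustive. For family (i): if $\widehat{H\+}=Q\in\E_1$, then since $H$ does not contain the edge $xy$ (it lies in $\Gcxy$), $H\+$ is $Q$ and $H=Q-xy$ with terminals the ends of that edge; planarity of $H$ is forced. For family (ii): if $\widehat{H\+}\in\E_1^*\setminus\E_1$ and $\widehat{H/xy}\in\E_1$, the same reconstruction applies with $Q=\widehat{H\+}$ and $e=xy$. Conversely one must check that each graph produced by (i) or (ii) that happens to be planar is genuinely $\egp$-critical, i.e.\ that $\M(H)=\dc1{\egp}$; this is where Lemma~\ref{lm-cc-egp-iff} is used in the forward direction (for family (i), criticality of $Q$ in $\E_1$ transfers to $\egp$-criticality of $H$ after checking $\egp(H)>\eg(H)$ and $\eg(H/xy)<\egp(H)$), and a direct minor-by-minor verification handles family (ii). Finally, collate the planar graphs found, remove isomorphic duplicates in $\Gxy$ (remembering that the isomorphism is allowed to swap $x$ and $y$), and confirm the count is $13$ and that the resulting graphs are precisely those in Fig.~\ref{fg-projective}.

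The main obstacle is not conceptual but the bookkeeping of the finite search: one must correctly enumerate all edge-deletions from all $35+68$ graphs, test planarity, test the membership conditions $Q/e\in\E_1$ and the $\egp$-criticality conditions of Lemma~\ref{lm-cc-egp-iff}, and then carry out the isomorphism reduction in the labeled category $\Gxy$ without over- or under-counting. This is exactly the "simple use of computer" the preceding paragraph refers to; the proof of the theorem is then just the assembly of these checks plus the observation (from Lemma~\ref{lm-c2-egp}(b) and (C2)) that yields the "every cascade contains one as a minor" clause. I would present the structural reduction in full and defer the exhaustive case-check to the computer verification, citing the obstruction lists \cite{archdeacon-1981,glover-1979} for $\E_1$ and \cite{glover-1979} for $\E_1^*$.
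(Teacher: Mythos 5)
Your proposal matches the paper's own argument: the paper likewise derives the dichotomy from Lemmas~\ref{lm-cc-egp-iff} and~\ref{lm-cc-egp}, reduces the enumeration to edge-deletions from the $35$ graphs in $\E_1$ and the $68$ graphs in $\E_1^*\setminus\E_1$ with a computer check of planarity and criticality, and obtains the ``every cascade contains one as a minor'' clause from Lemma~\ref{lm-c2-egp}(b) together with (C2). No substantive difference.
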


\begin{corollary}
\label{cor:C1plusplanar}
Every planar graph in $\Cc_1(\egp)$ contains disjoint $xy$-K-graphs.
\end{corollary}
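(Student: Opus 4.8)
By Theorem~\ref{thm:C1plus}, $\Cc_1(\egp)$ consists of the thirteen planar graphs of Fig.~\ref{fg-projective}, so the statement could simply be checked case by case; the plan, though, is to give a structural argument, which is a shortened version of the proof of Lemma~\ref{lm-disjoint-xy-k-graphs}. It is shorter because a planar $H\in\Cc_1(\egp)$ already has $\egp(H)=2$ and is $\egp$-critical, so there is no need to first pass to a minor, and every contradiction comes directly from the fact that $\egp(\mu H)\le1$ for each $\mu\in\M(H)$, rather than from the cascade axioms (C1)--(C3).

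First I would fix a planar $H\in\Cc_1(\egp)$, so that $\eg(H)=0$ and $\egp(H)=2$. Lemma~\ref{lm-dd-xy-2} applied to $H$ gives $\dd_H(x,y)\ge2$ (face-distance at most $1$ would force $\egp(H)\le1$). By Corollary~\ref{cr-two-cycles}, and more precisely by the construction in its proof, $H$ contains disjoint $(x,y)$-separating cycles $C_1',C_2'$ together with disjoint pre-K-graphs $L_x\subseteq C_1'\cup B_x$ and $L_y\subseteq C_2'\cup B_y$, where $B_x$ is the $C_1'$-bridge containing $x$, $B_y$ is the $C_2'$-bridge containing $y$, and $B_x\cap B_y=\emptyset$; moreover these pre-K-graphs come from Lemma~\ref{lm-x-k-graph}, whose construction places the terminal at a branch vertex or on an open branch whenever the terminal lies in the pre-K-graph. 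It therefore suffices to prove that $x\in V(L_x)$ and, by symmetry, $y\in V(L_y)$.

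Suppose $x\notin V(L_x)$. If every neighbour of $x$ lay on $C_1'$, then $B_x$ would have interior $\{x\}$ and every pre-K-graph contained in $C_1'\cup B_x$ would have to use an edge at $x$ (a cycle has no subgraph homeomorphic to $K_4$ or $K_{2,3}$), forcing $x\in V(L_x)$. So $x$ has a neighbour $v\in V(B_x)\sm V(C_1')$; since $v\in B_x$ and $y\in B_y$ we have $xv\ne xy$, hence $(xv,/)\in\M(H)$ and $\egp(H/xv)\le1$. On the other hand, because $x\notin V(L_x)$, the subgraphs $L_x$ and $L_y$ persist in $H/xv$ as disjoint pre-K-graphs, so $(H/xv)\+$ contains two disjoint K-graphs and Lemma~\ref{lm-two-k-graphs}(i) yields $\egp(H/xv)\ge2$ --- a contradiction. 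Hence $x\in V(L_x)$, and by symmetry $y\in V(L_y)$, so $L_x$ and $L_y$ are the required disjoint $xy$-K-graphs. The step I expect to be the main obstacle is checking that the contraction $H/xv$ really does preserve $L_x$ and $L_y$ \emph{as pre-K-graphs} (that the principal bridges witnessing the K-graph property survive in $(H/xv)\+$), and, relatedly, that Lemma~\ref{lm-x-k-graph} does deliver an $x$-K-graph once $x\in V(L_x)$; this is exactly the delicate bookkeeping already needed in the proof of Lemma~\ref{lm-disjoint-xy-k-graphs}, and it can be avoided by the case check against Fig.~\ref{fg-projective}.
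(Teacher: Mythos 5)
Your proposal is correct and takes essentially the same approach as the paper, whose proof of Corollary~\ref{cor:C1plusplanar} consists precisely of the two routes you describe: inspection of the thirteen graphs of Fig.~\ref{fg-projective}, together with the remark that Lemma~\ref{lm-disjoint-xy-k-graphs} can be adapted to give a direct argument. Your fleshed-out adaptation is sound --- in particular, replacing the cascade axioms by $\egp$-criticality of $H$ correctly short-circuits the contradiction to the direct clash between $\egp(H/xv)\le 1$ and $\egp(H/xv)\ge 2$ --- and the two bookkeeping points you flag are exactly those already absorbed into Lemma~\ref{lm-x-k-graph} and the proof of Lemma~\ref{lm-disjoint-xy-k-graphs}.
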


The corollary can be proved by inspection of graphs in Fig.~\ref{fg-projective}. However,
it is not hard to see that Lemma~\ref{lm-disjoint-xy-k-graphs} can be adapted to prove the corollary directly,
without relying on the computer-assisted proof of Theorem \ref{thm:C1plus}.

A selection of nonplanar graphs in $\Cc_1(\egp)$ is depicted in Fig.~\ref{fg-selected}.
The consequence of Lemma~\ref{lm-c2-egp}(b) is that a graph $G \in \S_1$ cannot contain a graph in Fig.~\ref{fg-selected} as a minor. This will be used extensively in the proofs of Lemmas \ref{lm-sep-1} and \ref{lm-2-sep}.

Next we prove, using minimality assumption on $L_x$, that in the case when $L_x\cong K_4$, $\By$ is attached to $L_x$ only at the branch-vertices of $L_x$.

\begin{lemma}
\label{lm-k4}
  If $L_x\cong K_4$, then the attachments of $\By$ in $L_x$ are branch-vertices of $L_x$.
\end{lemma}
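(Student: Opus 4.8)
The plan is to argue by contradiction. Suppose $L_x \cong K_4$ but $\By$ has an attachment $w$ that is an interior vertex of some branch $P$ of $L_x$, say $P$ joins branch-vertices $a$ and $b$. I would first recall the structure: $L_x$ is a $K_4$-subdivision with four branch-vertices, one of which is the terminal $x$; $\By$ is the $L_x$-bridge containing the whole $y$-K-graph $L_y$, and it attaches at $w$ on the interior of $P$. The idea is to reroute $L_x$ through $\By$ to produce a smaller $x$-K-graph, contradicting the minimality of $L_x$, unless doing so would destroy disjointness from $L_y$ — and I would have to rule that out too.

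The key steps, in order. First, since $\By$ attaches at the interior vertex $w$ of branch $P=aPb$, take a path $R$ inside $\By$ from $w$ to a second attachment $w'$ of $\By$ on $L_x$ (such $w'$ exists and $\ne w$ because $\By$ is a nontrivial bridge with at least two attachments; if $\By$ were trivial it would be a single edge and the claim is immediate since an edge between an interior vertex of $P$ and anything would already contradict minimality via Observation~\ref{obs-triangle}-type reasoning or direct rerouting). Second, case on where $w'$ lies relative to $P$: (i) if $w'$ is on a different branch or is a branch-vertex not equal to $a,b$, then $R$ together with the appropriate sub-paths of $L_x$ yields a new $K_4$- or $K_{2,3}$-subdivision that is an $x$-K-graph and uses strictly fewer edges of $L_x$ (the segment of $P$ between $w$ and one of $a,b$ is freed), contradicting minimality; (ii) if $w'$ is also in the interior of $P$, then $R$ is a chord of $P$, and cutting $P$ at $w$ and $w'$ and splicing in $R$ again shrinks the K-graph. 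In each case I must check that $x$ remains a branch-vertex (resp. remains on an open branch) of the new K-graph, so that it is still an $x$-K-graph; since rerouting through $\By$ only alters branches away from $x$ (one can always choose which sub-path of $P$ to discard), this is arrangeable.

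The main obstacle is disjointness: the rerouted $x$-K-graph $L_x'$ uses vertices of $\By$, and $L_y \subseteq \By$, so $L_x'$ might intersect $L_y$. To handle this I would choose the path $R$ in $\By$ to be a path in $\By$ from $w$ to the rest of $L_x$ that avoids $L_y$ as much as possible — more precisely, work in the $L_x$-bridge structure and note that $L_y$ together with its attachments to $L_x$ lies in $\By$, but one can select $R$ to reach back to $L_x$ through a different "finger" of $\By$ if $\By$ has enough structure; if $\By$ is so thin that every way back to $L_x$ from $w$ must pass through $L_y$, then $w$, $L_y$, and a cut vertex of $\By$ are arranged so that actually $L_y$ itself, extended by a short path to $w\in L_x$, yields an $x$-K-graph smaller than $L_x$ (using that $L_y$ is a K-graph and $w$ attaches). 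Spelling out this last sub-case — showing that the "bad" configuration still contradicts minimality of $L_x$ rather than requiring the minimality of $L_y$ — is the delicate point, and it is where I expect the bulk of the case-checking to go. A clean way to package it is: among all $x$-K-graphs disjoint from $L_y$, $L_x$ has the fewest edges; any attachment of $\By$ interior to a branch produces, via the reroutings above and an appropriate choice of $R$ inside $\By \setminus (L_y \cup \{\text{attachments of } L_y\})$ when that set still connects $w$ to $L_x$, a strictly smaller such K-graph — contradiction; and the residual case where $w$ is separated from $L_x$ within $\By$ by $L_y$'s attachment set is ruled out because then $\By$ would fail to be a single bridge or $w$ would not be an attachment of $\By$ at all.
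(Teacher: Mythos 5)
Your proposal rests on an incorrect reading of the minimality assumption on $L_x$. The paper defines $L_x$ to be \emph{subgraph-minimal}: ``there is no $x$-K-graph properly contained in $L_x$.'' Your rerouting through $\By$ produces a new pre-K-graph $L_x'$ that uses vertices and edges of $\By$, hence $L_x'$ is not a subgraph of $L_x$, and no contradiction with this minimality is obtained. You try to repair this by switching to edge-count minimality among $x$-K-graphs disjoint from $L_y$, but that is a different hypothesis than the one the surrounding lemmas rely on, and in any case it does not rescue the argument (see below).

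The deeper gap is that you treat all attachments $w$ on open branches uniformly, whereas the two situations are genuinely different and the hard one cannot be handled by any minimality argument. If $w$ lies on an open branch \emph{not} incident with $x$ (say $P_{1,2}$), the paper's proof simply drops the branch $P_{0,3}$ (the one from $x$ to the opposite branch vertex) and checks that the remaining $K_{2,3}$-subdivision, which \emph{is} a proper subgraph of $L_x$, is still an $x$-K-graph --- no rerouting into $\By$ is needed. But if $w$ lies on a branch \emph{incident} with $x$, say $P_{0,1}$, then every way of deleting a branch of $L_x$ so that $x$ ends up on an open branch of the resulting $K_{2,3}$ requires deleting a branch at $x$; after doing so, $w$ is no longer conveniently placed to witness that the smaller graph is a K-graph, and minimality gives nothing. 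The paper handles this case with an entirely different mechanism: contract the edge $e$ of $L_x$ incident with $x$ on the branch $P_{1,0}$, observe that $L_x/e$ together with $L_y$ are disjoint $xy$-K-graphs in $G/e$ (so $\egp(G/e)\ge 2$ by Lemma~\ref{lm-two-k-graphs}), and separately exhibit a K-graph inside $L_x/e$ witnessing $\eg(G/e)\ge 1$; this contradicts property (C1) of cascades. Your proposal never invokes (C1), (C2), or (C3), so it has no way to reach this case. Finally, your treatment of disjointness from $L_y$ when routing $R$ through $\By$ is left as a sketch with a ``delicate'' residual subcase; even if the rest were sound, that subcase would need to be closed, and the paper sidesteps the whole issue by never sending the replacement K-graph into $\By$ at all.
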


\begin{proof}
  Let $w_0=x, w_1, w_2$, and $w_3$ be the branch-vertices of $L_x$ and let $P_{i,j}$ be the open branch of $L_x$ connecting $w_i$ and $w_j$.
  Assume for a contradiction that there is an attachment $w$ of $\By$ on an open branch of $L_x$.
  Suppose first that $w$ lies on $P_{1,2}$.
  Then there is an $x$-K-graph $L\cong K_{2,3}$ and disjoint from $L_y$: The subgraph $L$ consists of the branch vertices $w_1, w_2$
  and branches $P_{1,3} \cup P_{3,2}$, $P_{1,2}$, and $P_{1,0} \cup P_{0, 2}$. Since $\By$ attaches to vertices $x, w_3$, and $w$, and $L$ is a proper subgraph of $L_x$,
  $L$ is indeed an $x$-K-graph disjoint from $L_y$, a contradiction to the minimality of $L_x$.

  By symmetry, we may assume that $w$ lies on $P_{1,0}$.
  Let $e$ be the edge of $L_x$ incident with $x$ and $P_{1,0}$.
  Consider the graph $G' = G / e$.
  Since $L_x / e$ is an $x$-K-graph of $G$ disjoint from $L_y$, $G'\+$ contains two disjoint K-graphs and thus $\eg(G'\+) = 2$ by Lemma~\ref{lm-two-k-graphs}(i).
  If $e = wx$, then $L_x / e$ is a K-graph in $G'$ and $\eg(G') \ge 1$.
  Otherwise, $L_x/e$ contains a K-graph $L\cong K_{2,3}$ as follows.
  The branch-vertices of $L$ are $w_1$ and $x$. The branches of $L$ are paths $P_{1,2} \cup P_{2,0}$, $P_{1,3} \cup P_{3,0}$, and $P_{1,0}$.
  Since $\By$ attaches on to vertices $w_2, w_3$, and $w$, the subgraph $L$ is a K-graph in $G'$ and $\eg(G') \ge 1$.
  We conclude that $\eg(G') = \eg(G)$ and $\eg(G'\+) = \eg(G\+)$ which violates (C1).
\end{proof}

Since $\eg(G) = 1$, at most one of $L_x$ and $L_y$ can be a K-graph in $G$ (Lemma \ref{lm-noncontractible}).
Let us recall that the interior of $L_x$ consists of $x$ and all open branches of $L_x$ that are incident with~$x$.

\begin{lemma}
\label{lm-interiors}
  If $B_y$ is attached to the interior of $L_x$, then its only attachment in the interior of $L_x$ is the vertex $x$. In such a case, $\Bx$ is not attached to the interior of $L_y$.
\end{lemma}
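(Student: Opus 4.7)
The plan is to derive both assertions by contradiction, using property (C1) of cascades and the disjointness of the pre-K-graphs $L_x, L_y$.

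For the first assertion, the case $L_x \cong K_4$ is immediate from Lemma~\ref{lm-k4}, as $x$ is the only branch vertex of $L_x$ lying in its interior. The main case is $L_x \cong K_{2,3}$; write the branch vertices as $u, v$ and the three open branches as $P_1, P_2, P_3$ with $x \in V(P_1)$. Since $L_x$ is an $x$-K-graph in $G\+$ and the edge $xy$ of $G\+$ contributes only the attachment $x$ on $L_x$, the bridge $\By$ itself has attachments $p_2 \in V(P_2)$ and $p_3 \in V(P_3)$. Suppose for a contradiction that $\By$ also attaches at some $w \in V(P_1) \setminus \{u, v, x\}$; by the $u$--$v$ symmetry assume $w$ lies between $u$ and $x$ on $P_1$. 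Let $e$ be the edge of $P_1$ joining $w$ to its neighbor on the $x$-side (so $e = wx$ if $w$ is adjacent to $x$). After contracting $e$, the two endpoints merge into a vertex lying strictly between $u$ and $v$ on the open branch $P_1/e$ of $L_x/e \cong K_{2,3}$. Hence $\By$ still has attachments on all three open branches of $L_x/e$ in $G/e$ (at $p_2$, at $p_3$, and at the image of $w$), so $L_x/e$ is a K-graph in $G/e$, giving $\eg(G/e) \ge 1$. As $L_x/e$ and $L_y$ are disjoint pre-K-graphs in $G/e$, Lemma~\ref{lm-two-k-graphs}(i) applied to $(G/e)\+$ gives $\egp(G/e) \ge 2$. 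Hence neither $\eg$ nor $\egp$ decreases on contracting $e$, contradicting (C1).

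For the second assertion, suppose $\By$ is attached to the interior of $L_x$ and $\Bx$ is attached to the interior of $L_y$. By the first assertion and its symmetric counterpart, the only interior attachment of $\By$ is $x$ and the only interior attachment of $\Bx$ is $y$. Since $L_x$ is an $x$-K-graph in $G\+$ and the edge $xy$ of $G\+$ contributes only the attachment $x$ on $L_x$, the bridge $\By$ itself must already attach to every branch vertex of $L_x$ (if $L_x \cong K_4$) or to every open branch of $L_x$ (if $L_x \cong K_{2,3}$); combined with its attachment at $x$, this witnesses $L_x$ as a K-graph in $G$ rather than just in $G\+$. Symmetrically $L_y$ is a K-graph in $G$ via $\Bx$. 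As $L_x$ and $L_y$ are disjoint, Lemma~\ref{lm-two-k-graphs}(i) yields $\eg(G) \ge 2$, contradicting $\eg(G) = 1$ for $G \in \S_1$.

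The main obstacle is choosing the contracted edge $e$ in part one. The most obvious candidate---the first edge of $P_1$ from $u$---can fail when $w$ is adjacent to $u$, since then contraction collapses $w$ to the branch vertex $u$ and removes the attachment from the open branch $P_1/e$. Choosing instead the edge of $P_1$ incident to $w$ on the $x$-side sidesteps this issue uniformly, including the degenerate subcase in which that edge is $wx$ itself and the merged vertex becomes the new terminal $x$.
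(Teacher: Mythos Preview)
Your proof is correct and follows essentially the same approach as the paper's: for the first assertion you contract an edge of $L_x$ lying entirely in the interior and show that neither $\eg$ nor $\egp$ drops, contradicting (C1); for the second assertion you show that both $L_x$ and $L_y$ become K-graphs in $G$ itself, forcing $\eg(G)\ge 2$ via Lemma~\ref{lm-two-k-graphs}(i).

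Two small remarks. First, the paper proves the second assertion independently of the first (it only needs that $B_y$ attaches \emph{somewhere} in the interior of $L_x$, not that the attachment is exactly $x$), so your invocation of the first assertion there is unnecessary, though harmless. Second, your explicit choice of $e$ as the edge incident to $w$ on the $x$-side is a concrete instance of the paper's ``an edge with both ends in the interior''; your closing paragraph explaining why the na\"{\i}ve choice (the edge at $u$) can fail is a nice clarification that the paper omits. In both proofs the claim ``$B_y$ attaches to $P_2$ and $P_3$'' tacitly assumes $B_y\cup\{xy\}$ is the principal $L_x$-bridge in $G^+$; if instead some other bridge $B'$ is principal, then $B'$ is already principal in $G$ and the conclusion that $L_x$ (or $L_x/e$) is a K-graph in $G$ (or $G/e$) holds immediately, so the argument goes through either way.
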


\begin{proof}
  If both $\By$ and $\Bx$ attach to the interior of $L_x$ and $L_y$, respectively, then we obtain (using Lemma~\ref{lm-k4} if $L_x$ or $L_y$ is homeomorphic to $K_4$) that both $L_x$ and $L_y$ are K-graphs in $G$.
  By Lemma~\ref{lm-two-k-graphs}(i), $\eg(G) \ge 2$, a contradiction with $G \in \S_1$.

  Suppose that $\By$ has an attachment in the interior of $L_x$ that is different from $x$.
  Thus there exists an edge $e \in E(L_x)$ with both ends in the interior of $L_x$.
  Consider the graph $G / e$. Since $L_x/e$ is a K-graph in $G / e$, $\eg(G / e) \ge 1$.
  Since $L_x/e$ and $L_y$ are $xy$-K-graphs in $G / e$, $\egp(G / e) \ge 2$.
  This contradicts (C1).
\end{proof}

When dealing with cascades in $\S_1$, we will consider a base $H$ in $G$ containing the $xy$-K-graphs $L_x$ and $L_y$ in $G$ as introduced at the beginning of this section. We will explore how $L_x$ and $L_y$ are linked to each other by paths in $H$. To describe the linkages, we introduce some additional terminology that will be used to capture the situation inside the graph $H$.

Let $H$ be a graph that contains a subgraph $L$, called \df{core}, homeomorphic to $K_4$ or $K_{2,3}$ with distinguished cycle $C$ in $L$ that contains two or three branch vertices of $L$.
When $L=L_x$ or $L=L_y$, then we select $C$ to be the cycle that does not contain the terminal $x$ or $y$, respectively.
We say that $C$ is a \df{boundary cycle} of the core $L$.
The edges and vertices of $L$ that do not lie in $C$ are said to be in the \df{interior} of $L$.
For $U \ss V(H)$, we say that $L$ is \df{$U$-linked} in $H$ if there are $|U|$ disjoint paths in $H$ connecting $C$ and $U$ that are internally disjoint from $L$. We say that $H$ is a \df{$U$-linkage} of $L$ if $L$ is $U$-linked in $H$ and
the following holds. If $L\cong K_{2,3}$, then for every open branch $t$ on the boundary of $L$ there is a path in $H$ from $t$ to $U$ that is internally disjoint from $L$; if $L\cong K_4$, then for every branch vertex $t$ on the boundary of $L$ there is a path in $H$ from $t$ to $U$ that is internally disjoint from $L$. Existence of these paths will enable us to show that $L$ is close to be a K-graph in $H$. Namely, if we add a new vertex adjacent to all vertices in $U$ and to a vertex in the interior of $L$, then $L$ contains a K-graph in the extended graph.
If $u \in U$ has degree at least 2 in a $U$-linkage $H$, then $u$ is called a \df{foot} of $H$.
If $u \in U$ has degree 1, then the \df{foot} of $H$ containing $u$ is the path from $u$ to a first vertex of degree at least 3.
The foot containing $u$ is also called the \df{$u$-foot} of $H$.
A \df{$u$-foot} is \df{removable} if $H$ is a $(U \sm \{u\})$-linkage.
The notion of a linkage will be used to describe a pre-K-graph in $G$ together with essential paths that attach onto it.

A set $U \ss V(G)$ \df{separates} $L_x$ and $L_y$ in $G$ if every $(L_x,L_y)$-path in $G$ contains a vertex in $U$.
We say that $U$ \df{blocks} $L_x$ \df{from} $L_y$ in $G$ if $U \cup \{x\}$ separates $L_x$ and $L_y$ and $L_x$ is $U$-linked in $G$. The introduced terms are illustrated in Fig.~\ref{fg-linkage-example}.

\begin{figure}[htb]
  \centering
  \includegraphics[width=0.6\textwidth]{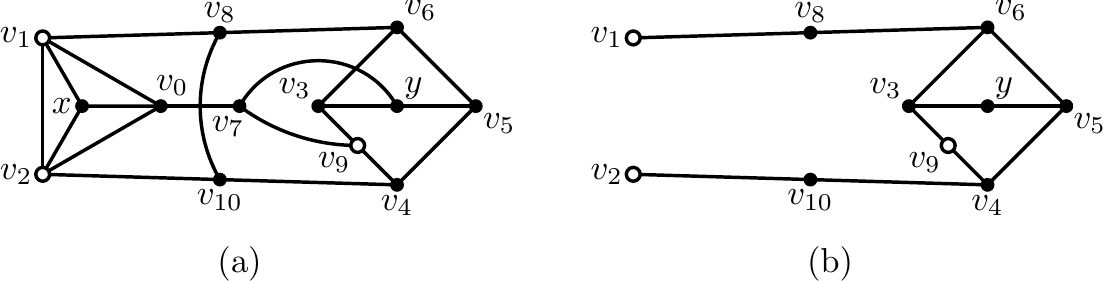}
  \caption{%
    (a) A graph with $L_x$ induced by $x, v_0, v_1, v_2$ and $L_y$ induced by $y, v_3, v_4, v_5, v_6, v_9$.
    The set $U = \{v_1, v_2, v_9\}$ blocks $L_y$ from $L_x$ but not $L_x$ from $L_y$.
    (b) A $U$-linkage with core $L_y$ and feet $v_4v_{10}v_2$, $v_9$, $v_6v_8v_1$.
    Each of the feet $v_4v_{10}v_2$ and $v_9$ is removable but $v_6v_8v_1$ is not.
    This shows that $L_y$ and $U$ admit the linkage \gs{g} (shown in Fig.~\ref{fg-separation}(g)) that is obtained by contracting the edges $v_3v_9, v_1v_8, v_2v_{10}$.}
  \label{fg-linkage-example}
\end{figure}

Let $k$ be the maximum number of pairwise disjoint paths in $G$ connecting
the boundaries of $L_x$ and $L_y$ that are internally disjoint from $L_x$ and $L_y$.
Then we say that the $xy$-K-graphs $L_x$ and $L_y$ are \df{$k$-separated} in $G$.

\begin{lemma}
\label{lm-separating}
  If $L_x$ and $L_y$ are $k$-separated, then there exists a set $U \ss V(G)$ of cardinality $k$ such that
  one of the following cases occurs:
  \begin{enumerate}[label=\rm(\roman*)]
  \item
    $U$ blocks $L_x$ from $L_y$ and $L_y$ from $L_x$.
  \item
    $U$ blocks $L_x$ from $L_y$ and $U \cup \{x\}$ blocks $L_y$ from $L_x$.
  \item
    $U \cup \{y\}$ blocks $L_x$ from $L_y$ and $U$ blocks $L_y$ from $L_x$.
  \end{enumerate}
\end{lemma}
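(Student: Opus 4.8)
The statement asserts that $k$ disjoint boundary-to-boundary paths can be "funnelled" through a set $U$ of exactly $k$ vertices, where moreover each of $L_x$, $L_y$ is $U$-linked (after augmenting by $x$ or $y$ on the side where the raw separation fails). The natural tool is Menger's theorem, but applied with care to two separate separation problems (one for each terminal side) and then reconciled. First I would set up Menger carefully: let $k$ be the max number of pairwise disjoint paths from the boundary cycle $C_x$ of $L_x$ to the boundary cycle $C_y$ of $L_y$, internally disjoint from $L_x$ and $L_y$. Contract $C_x$ to a single vertex $s$ and $C_y$ to a single vertex $t$ (keeping the interiors of $L_x$, $L_y$ intact but irrelevant), and apply Menger in $\hat G$ to get a vertex cut $U$ of size $k$ separating $s$ from $t$; we may take $U$ disjoint from $V(L_x)\cup V(L_y)$ by standard uncrossing (push any cut vertex lying on a boundary cycle outward into a bridge, using that $k$ is the true max). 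This $U$ separates $L_x$ and $L_y$ in $G$ in the strong sense that every $(L_x,L_y)$-path meets $U$.

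The second and main step is to upgrade "separates" to "blocks", i.e., to show that (after possibly adding $x$ or $y$) both $L_x$ is $U$-linked and $L_y$ is $U$-linked. $U$-linked means there are $|U|=k$ disjoint paths from $C_x$ to $U$ internally disjoint from $L_x$ — this is exactly Menger again, applied between $C_x$ and $U$, provided no smaller cut exists; if some set $W$ with $|W|<k$ separated $C_x$ from $U$, then $W$ would also separate $C_x$ from $C_y$ (any $C_x$–$C_y$ path internally avoiding $L_x,L_y$ reaches $U$, hence passes through $W$ before reaching $U$), contradicting maximality of $k$. So $L_x$ is automatically $U$-linked, and symmetrically $L_y$ is $U$-linked. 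The delicate part is the extra condition hidden in "$U$ blocks $L_x$ from $L_y$": we need $U\cup\{x\}$ to \emph{separate} $L_x$ and $L_y$. Since $U$ already separates them (in the even stronger sense above), $U\cup\{x\}$ certainly does — so actually the symmetric case (i) $U$ blocks both ways holds \emph{whenever} $U$ can be chosen disjoint from the interiors. The three-case split must therefore come from the possibility that $U$, as produced by Menger, is forced to use an interior vertex of $L_x$ or $L_y$.

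So the real case analysis is: can $U$ be chosen to avoid the interiors of both cores? If yes, case (i). If every minimum $s$–$t$ cut must use an interior vertex of $L_x$ (but can avoid that of $L_y$), I would argue that such an interior vertex can be "rerouted" to $x$: the interior of $L_x$ is $x$ together with the open branches at $x$ (Lemma on interiors), so every $(L_x,L_y)$-path entering the interior of $L_x$ must pass through $x$ (here I would invoke Lemma~\ref{lm-interiors} and Lemma~\ref{lm-k4} to pin down that, given how $\By$ attaches, the only interior attachment is $x$); hence replacing the offending interior vertex by $x$ yields a set $U'$ of size $\le k$ with $U'\cup\{x\}$ separating $L_x$ from $L_y$, and one then re-runs the Menger argument to get the linkage of $L_x$ using $U'$ of size exactly $k$ on the $C_x$ side while keeping $U'$ blocking $L_y$ from $L_x$ via $U'\cup\{x\}$ — this is case (ii), and case (iii) is the mirror image with the roles of $x,y$ and $L_x,L_y$ swapped. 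The main obstacle is executing this "reroute through the terminal" cleanly while simultaneously preserving the two linkage properties and the exact count $k$; I expect this to require the uncrossing/submodularity argument for minimum cuts (the family of minimum $s$–$t$ cuts forms a lattice, so one can pick the cut closest to $s$ and the cut closest to $t$) together with the structural input from Lemmas~\ref{lm-k4} and~\ref{lm-interiors} about how the bridges $\Bx,\By$ attach to the interiors. Once the correct $U$ (or $U\cup\{x\}$, $U\cup\{y\}$) is fixed, verifying $U$-linkage of each core is the routine Menger step described above.
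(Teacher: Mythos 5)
There is a genuine gap, and it sits at the center of your argument. After applying Menger between the boundary cycles you assert that the resulting $k$-cut $U$ ``separates $L_x$ and $L_y$ in the strong sense that every $(L_x,L_y)$-path meets $U$.'' That is false: an $(L_x,L_y)$-path may run from the boundary of $L_x$ directly to a vertex in the \emph{interior} of $L_y$ (by Lemma~\ref{lm-interiors} this attachment must be $y$ itself), and such a path need not meet a cut that only has to intercept paths ending on the boundary cycles. This is precisely why the conclusion of the lemma has three cases rather than one. Your proposed dichotomy --- whether the minimum boundary cut can be chosen to avoid the interiors of the cores --- does not capture this: the paths being cut are by definition internally disjoint from $L_x$ and $L_y$ and end on the boundaries, so no minimum cut for them ever needs an interior vertex. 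Hence your case analysis is vacuous and would always land in case (i), which is wrong (e.g.\ when some vertex of the boundary of $L_x$ is joined by an edge to $y$).

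The paper's proof makes the comparison you are missing explicit: it takes $r$ to be the maximum number of disjoint paths between the \emph{full} K-graphs $L_x$ and $L_y$ and compares it with $k$, the maximum number between their boundaries. A symmetry reduction via Lemma~\ref{lm-interiors} first ensures $\By$ does not attach to the interior of $L_x$; then at most one extra path (ending at $y$) can exist, so $r\in\{k,k+1\}$. If $r=k$, a Menger cut $U_0$ of size $k$ for the full K-graphs gives case (i); if $r=k+1$, one takes a $k$-cut $U_1$ for the boundary-to-boundary paths and observes that $U_1\cup\{y\}$ separates the full K-graphs while the $k+1$ paths witness the two required linkages, giving case (iii) (case (ii) is the mirror image). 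You have the right ingredients (Menger, Lemma~\ref{lm-interiors}, the fact that interior attachments go through the terminal), but you deploy Lemma~\ref{lm-interiors} to ``reroute'' cut vertices that were never there, instead of using it to account for the one extra path that the boundary cut cannot see. Your Menger argument for the $U$-linkage of each core (a smaller cut between $C_x$ and $U$ would separate the boundaries, contradicting the value of $k$) is fine and essentially matches the paper's truncation of the $k$ disjoint paths at $U$.
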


\begin{proof}
  In the conclusions of the lemma, there is symmetry between $x$ and $y$. Thus, we may assume by Lemma~\ref{lm-interiors} that $\By$ is not attached to the interior of $L_x$.
  Let $P_1, \ldots, P_r$ be pairwise disjoint paths connecting $L_x$ and $L_y$ such that $r$ is maximum
  and let $U_0$ be a minimum vertex-set that meets all paths connecting $L_x$ and $L_y$.
  By Menger's Theorem, we have that $|U_0| = r$.
  Note that $r \ge k$. Assume first that $r = k$.
  In this case $U_0$ separates $L_x$ and $L_y$.
  Since there are $k$ pairwise disjoint paths connecting the boundaries of $L_x$ and $L_y$ and all of them
  meet $U_0$, both $L_x$ and $L_y$ are $U_0$-linked in $G$. We conclude that (i) holds.

  Assume now that $r > k$. By Lemma~\ref{lm-interiors}, $\Bx$ has at most one attachment in the interior of $L_y$.
  Thus there is only one path, say $P_r$, that has an end in the interior of $L_y$.
  As noted at the beginning of the proof, none of the paths is attached to the interior of $L_x$.
  Since there are at most $k$ disjoint paths joining the boundaries of $L_x$ and $L_y$,
  we conclude that $r = k+1$.
  Let $U_1$ be a minimum vertex-cut (of size $k$) that meets all paths connecting the boundaries of $L_x$ and $L_y$.
  Thus $U_1$ meets all the paths $P_1, \ldots, P_k$.
  We see that $U_1 \cup \{y\}$ separates $L_x$ and $L_y$.
  Also, the paths $P_1, \ldots, P_r$ demonstrate that $L_x$ is $(U_1 \cup \{y\})$-linked
  and that $L_y$ is $U_1$-linked.
  We conclude that $U_1 \cup \{y\}$ blocks $L_x$ from $L_y$ and $U_1$ blocks $L_y$ from $L_x$.
  Hence (iii) holds. The case (ii) occurs in the symmetric case when $\By$ attaches to the interior of $L_x$.
\end{proof}

\begin{figure}
  \centering
  \includegraphics[width=0.63\textwidth]{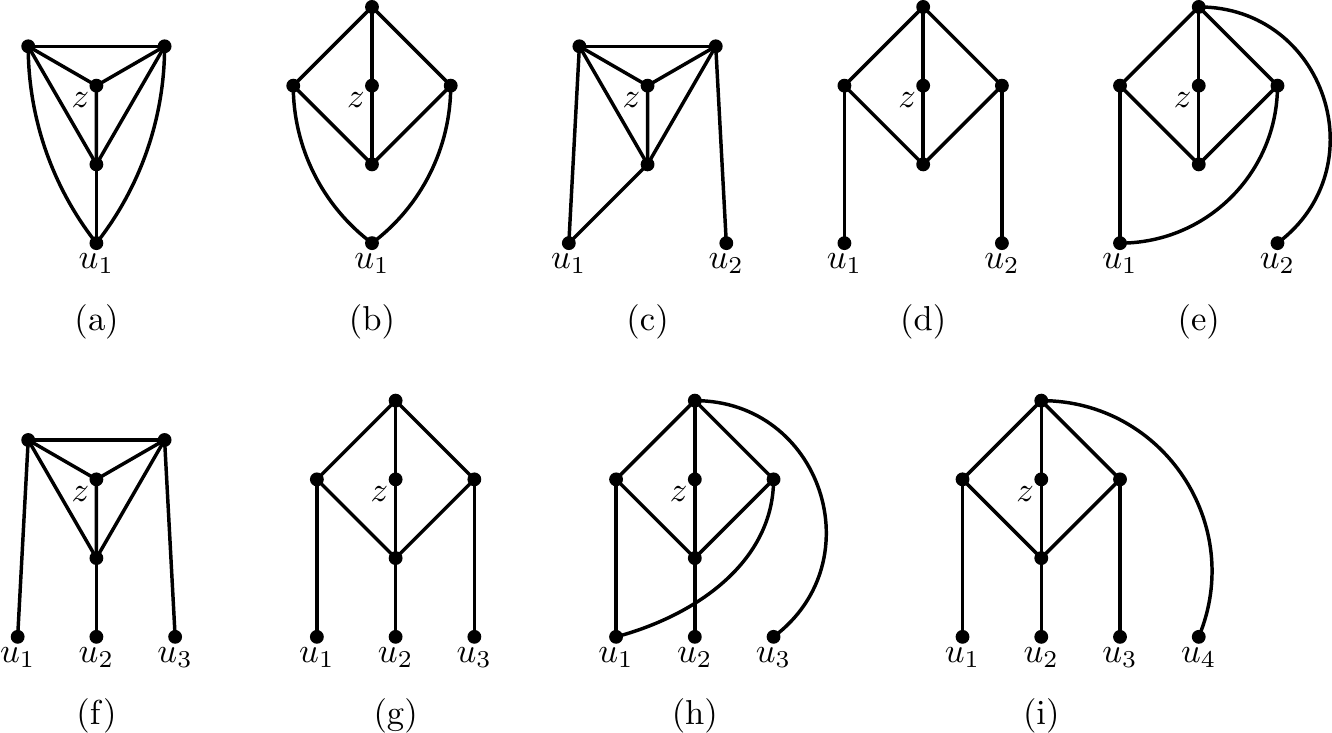}
  \caption{Linkages to small sets. In each linkage, any subset of feet can be contracted.}
  \label{fg-separation}
\end{figure}

In the next lemma we classify all possible types of $U$-linkages of small order. To do this, we need a way to say when an abstract $U'$-linkage $H$ models a $U$-linkage in $G$.
Consider the cascade $G$ and let $z \in \{x,y\}$ and $U \ss V(G)$.
We say that $L_z$ and $U$ \df{admit} a $U'$-linkage $H$ if there exists a set $F \ss E(G)$ such that $L_z /(F\cap E(L_z))$ is a $z$-K-graph in $G/F$ and
$H$ is isomorphic to a subgraph of $G/F$ such that $U'$ is mapped bijectively to $U$ and the core of $H$ is mapped to $L_z/(F\cap E(L_z))$.

\begin{lemma}
  \label{lm-sep-cuts}
  Let $H$ be a base of $G$ and let $U \ss V(H)$.
  If $U$ blocks $L_x$ from $L_y$ in $H$, then $L_x$ and $U$ admit a linkage.
  Furthermore, if $1 \le |U| \le 4$, then $L_x$ and $U$ admit a linkage from Fig.~\ref{fg-separation} (with some of the feet possibly of length zero).
\end{lemma}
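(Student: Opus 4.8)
The plan is to proceed in two stages, mirroring the two sentences of the statement. First I would establish the general claim that if $U$ blocks $L_x$ from $L_y$ in $H$, then $L_x$ and $U$ admit a linkage. By definition of ``blocks'', $U \cup \{x\}$ separates $L_x$ and $L_y$ and $L_x$ is $U$-linked in $H$; let $C$ be the boundary cycle of $L_x$. The $U$-linkedness gives $|U|$ disjoint $C$-to-$U$ paths internally disjoint from $L_x$. I would then take $F \subseteq E(H)$ to be the edge-sets of these paths together with appropriate edges of $L_x$, and contract the interiors so that each path collapses to a single edge landing directly on a branch vertex (or open branch) of the contracted core; the point of the ``foot'' definitions preceding the lemma is exactly to allow these contractions while keeping track of what remains. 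One must check that after contracting $F \cap E(L_x)$, the image $L_x/(F\cap E(L_x))$ is still a $z$-K-graph in $G/F$: this uses that $G$ is a cascade and that $H$ is a base, so a minor-operation contracting edges strictly inside $L_x$ cannot destroy the K-graph structure without violating (C1)-style obstructions, and more directly it follows from Lemma~\ref{lm-k4} and Lemma~\ref{lm-interiors} which pin down where $B_y$ can attach. The additional ``for every branch vertex / open branch on the boundary there is a path to $U$ internally disjoint from $L$'' requirement in the definition of $U$-linkage has to be verified: here I would use that $L_x$ is a $z$-K-graph, so the principal bridge in $G^+$ witnessing this forces, after adding the edge $xy$, paths reaching every boundary branch; equivalently, minimality of $L_x$ (no proper $x$-K-graph) rules out a boundary branch that is ``cut off'' from $U$.

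For the second stage --- that when $1 \le |U| \le 4$ the linkage is one of those in Fig.~\ref{fg-separation} --- I would argue by a finite case analysis on $|U|$ and on the isomorphism type ($K_4$ or $K_{2,3}$) of the core, together with the foot structure. The key observation is that a $U$-linkage of $L_x$ is a graph of bounded complexity once we contract all feet to length zero: it is $L_x$ (which is $K_4$ or $K_{2,3}$, up to subdivision) plus at most $4$ additional vertices of $U$ attached by a bounded number of edges to the boundary cycle $C$, subject to the constraints that (a) every boundary branch/branch-vertex sees some $u \in U$, and (b) the $|U|$ linking paths are vertex-disjoint. So the number of combinatorial types is finite, and enumerating them for $|U| = 1, 2, 3, 4$ yields precisely the list in Fig.~\ref{fg-separation}. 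To keep the enumeration honest I would note symmetries of $L_x$ (the automorphisms fixing $C$) and the remark that in each pictured linkage any subset of feet may be contracted, which is exactly what lets us normalize feet to length zero and match an arbitrary $U$-linkage in $H$ to a figure. Types that would otherwise appear but are excluded must be ruled out using Lemma~\ref{lm-c2-egp}(b): if a putative linkage contained a configuration yielding one of the nonplanar graphs of Fig.~\ref{fg-selected} as a minor of $G$, that contradicts $G \in \S_1$; similarly configurations producing two disjoint K-graphs in $G$ itself (not merely in $G^+$) are forbidden since $\eg(G) = 1$, by Lemma~\ref{lm-two-k-graphs}(i).

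The main obstacle I anticipate is the completeness of the case analysis in the second stage --- making sure no valid $U$-linkage type is missed and that each excluded type is genuinely ruled out by an explicit forbidden minor. In particular, when $L_x \cong K_{2,3}$ the boundary cycle has three open branches and the feet can attach to branch vertices as well, producing more subcases than the $K_4$ case; and when $|U| = 4$ the disjointness of four linking paths interacts delicately with which branches are hit, so I would organize this by first fixing the multiset of boundary branches used by the linking paths, then distributing the $\le 4$ feet, then checking planarity/realizability against Fig.~\ref{fg-selected}. A secondary technical point is handling feet of length zero uniformly: I would phrase the whole argument in the contracted model (all feet contracted) so that ``admit'' is checked once at the end by un-contracting, rather than juggling subdivisions throughout.
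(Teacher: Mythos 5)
Your first stage matches the paper's argument in substance: the $U$-linkage is obtained directly as the union of $L_x$, the $|U|$ disjoint boundary-to-$U$ paths $Q_1,\dots,Q_{|U|}$ coming from $U$-linkedness, and the two or three paths $P_i$ from the boundary branch vertices (resp.\ open boundary branches) to $U$ that exist because $L_x$ is a K-graph in $H^+$ and $U\cup\{x\}$ separates; Lemma~\ref{lm-interiors} gives internal disjointness from $L_x$. Note that no contraction of edges of $L_x$ is ever needed ($F\cap E(L_x)=\emptyset$ in the paper's proof; only edges of the $P_i$ and $Q_i$ not incident with $L_x$ are contracted), so your worry about the core surviving contraction is moot. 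Also, for $L_x\cong K_{2,3}$ the boundary cycle consists of the \emph{two} branches avoiding $x$, not three.

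The genuine gap is in your second stage. You propose to rule out the combinatorial types that do not appear in Fig.~\ref{fg-separation} by exhibiting forbidden minors via Lemma~\ref{lm-c2-egp}(b) and Fig.~\ref{fg-selected}, but that is not what does the work, and it would not suffice: a single $U$-linkage of $L_x$ by itself does not produce a nonplanar graph or two disjoint K-graphs (those exclusions are only deployed later, in Lemmas~\ref{lm-sep-1}--\ref{lm-3con-bases}, where the linkages of $L_x$ \emph{and} $L_y$ are combined). The paper's classification is instead a purely path-theoretic argument: (a) Lemma~\ref{lm-k4} forces all attachments onto branch vertices when $L_x\cong K_4$, which immediately gives $|U|\le 3$ there and explains why only \gs{a}, \gs{c}, \gs{f} occur; (b) the $P_i$ are chosen pairwise disjoint whenever possible, and when they cannot be, one analyzes the \emph{first} vertex at which $P_2$ meets $P_1$ or some $Q_j$, which is exactly what produces the shared-foot configurations \gs{e}, \gs{h}, \gs{i} and, in the $|U|=4$ case, yields the contradiction ``some $Q_j$ reaches an open boundary branch, contradicting the choice of $P_1,P_2$'' that eliminates the remaining type. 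Without this rerouting/first-intersection analysis your enumeration has no mechanism for certifying completeness, which is the entire content of the second sentence of the lemma.
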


\begin{proof}
  Since $H$ is a base, it contains $L_x$ and $L_y$, and these are K-graphs in $H^+$.
  Since $U$ blocks $L_x$ from $L_y$ in $H$, there are three paths $P_1, P_2, P_3$ joining the branch vertices on the boundary of $L_x$ with $U$ (when $L_x\cong K_4$) or two paths $P_1, P_2$ from the interiors of both open branches on the boundary of $L_x$ to $U$ (when $L_x\cong K_{2,3}$). These paths are internally disjoint from $L_x$ by Lemma \ref{lm-interiors}. Moreover, $L_x$ is $U$-linked in $H$,
  so there are $|U|$ disjoint paths $Q_1,\dots,Q_{|U|}$ joining the boundary of $L_x$ with $U$.
  By definition, the union $R = L_x \cup_i P_i \cup_i Q_i$ form a $U$-linkage of $L_x$ in $H$.

  Let us now prove that $L_x$ and $U$ admit a linkage from Fig.~\ref{fg-separation} when $|U| \le 4$.
  Assume first that $L_x\cong K_4$.
  By Lemma~\ref{lm-k4}, $|U| \le 3$ and
  there are three paths $P_1, P_2, P_3$ connecting the branch-vertices of $L_x$ different from $x$ to $U$.
  Choose the paths so that each pair is disjoint if possible.
  Assume that $U = \{u_1\}$.
  By contracting the edges of $P_1, P_2$, and $P_3$ that are not incident with $L_x$,
  we obtain that $L_x$ admits the linkage \gs{a}.

  Assume now that $U = \{u_1, u_2\}$ is of size two.
  Since the paths $Q_1,Q_2$ also start at the branch vertices of $L_x$ (by Lemma \ref{lm-k4}), we may assume that $P_1$ and $P_2$ are disjoint and connect $L_x$ to $u_1$ and $u_2$, respectively.
  We may also assume that $P_3$ intersects only one of the other paths, say $P_1$.
  By contracting the edges of $P_1, P_2,P_3$ that are not incident with $L_x$, we obtain that $L_x$ admits the linkage \gs{c}.

  Assume now that $U = \{u_1, u_2, u_3\}$ is of size three.
  Since there are three disjoint paths $Q_1,Q_2,Q_3$ connecting $L_x$ and $U$, we may assume that $P_1, P_2$, and $P_3$ are pairwise disjoint.
  Thus $L_x$ admits the linkage \gs{f}.

  Assume now that $L_x\cong K_{2,3}$.
  There are two paths $P_1, P_2$ connecting the open branches on the boundary of $L_x$ to $U$.
  Choose the paths so that they are disjoint if possible.
  Assume that $U = \{u_1\}$. We see that $L_x$ admits the linkage \gs{b}.
  Assume now that $U = \{u_1, u_2\}$ is of size two.
  After possibly changing some of the paths, we may assume that $Q_1 = P_1$. If $P_2$ is disjoint from $P_1$, then $L_x$ admits the linkage \gs{d}.
  Otherwise we may assume that $Q_2$ is disjoint from $P_2$ and from the open branches of $L_x$.
  Hence $L_x$ admits the linkage \gs{e}.

  Assume now that $U = \{u_1, u_2, u_3\}$ is of size three.
  We may assume that $Q_1 = P_1$. If $P_2$ is disjoint from $P_1$, then $P_2$ can be changed, if necessary, so that it intersects only one of $Q_2,Q_3$. Then it is easy to see that $L_x$ admits the linkage \gs{g}.
  Otherwise, we may assume that $P_2$ intersects $P_1$ and that its segment from $L_x$ to $P_1$ does not intersect $Q_1,Q_2$. Now it is easy to see that $L_x$ admits the linkage \gs{h}.

  Assume now that $U = \{u_1,u_2,u_3,u_4\}$ is of size four.
  We may assume that $Q_1 = P_1$. If $P_2$ first intersects one of $Q_2, Q_3, Q_4$, then $L_x$ admits the linkage \gs{i}.
  If $P_2$ first intersects $P_1$, then one of $Q_2, Q_3, Q_4$ connects to an open branch of $L_x$ which is a contradiction with the choice of $P_1,P_2$, since $Q_2, Q_3$, and $Q_4$ are disjoint from $P_1$.
\end{proof}

The following lemma will be used to reduce the number of cases when $G$ admits linkages for $L_x$ and $L_y$ whose feet meet each other.

\begin{lemma}
\label{lm-removable}
Suppose that $H$ is a base of $G$ such that $L_x$ admits a $U_1$-linkage $H_x$ and $L_y$ admits a $U_2$-linkage $H_y$ in $H$
such that $U_1 \sm U_2 \ss \{y\}$, $U_2 \sm U_1 \ss \{x\}$,
$H_x$ and $H_y$ are edge-disjoint,
and there exists $u \in U_1 \cap U_2$ such that the $u$-feet of $H_x$ and $H_y$ are removable.
Then there is a proper subbase of $H$.
Moreover, neither $L_x$ nor $L_y$ is a K-graph in $G$.
\end{lemma}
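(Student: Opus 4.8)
The plan is to obtain the subbase by deleting the two $u$-feet from $H$, and then to check that each of $L_x,L_y$ survives as a pre-K-graph by rerouting its principal bridge through the edge $xy$ and the linkage of the \emph{other} core. First I would reduce to the situation where $H_x,H_y$ are honest subgraphs of $H$ and $L_x,L_y$ are genuine disjoint $xy$-K-graphs: this is what ``admits'' buys us after passing to a suitable contraction of $H$, and the property of being a (sub)base is preserved by such contractions, so it suffices to argue there. Write $P_x\ss H_x$ and $P_y\ss H_y$ for the $u$-feet; since $H_x$ and $H_y$ are edge-disjoint, so are $P_x$ and $P_y$. Removability of the $u$-feet means precisely that the $(U_1\sm\{u\})$-linkage structure of $L_x$ survives in $H_x-E(P_x)$ and the $(U_2\sm\{u\})$-linkage structure of $L_y$ survives in $H_y-E(P_y)$; in particular the relevant paths from the boundary cycles of $L_x,L_y$ to $U_1\sm\{u\}$ and to $U_2\sm\{u\}$ exist, avoid $u$, and are internally disjoint from $L_x$ (resp.\ $L_y$) by Lemma~\ref{lm-interiors} (and Lemma~\ref{lm-k4} when a core is a $K_4$).

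Set $H':=H-E(P_x)-E(P_y)$, discarding $u$ if it becomes isolated; in the degenerate case that both $u$-feet are the single vertex $u$ one deletes instead a single non-essential edge at $u$ and argues in the same way. Then $H'\subsetneq H$, and it remains to show that $L_x$ and $L_y$ are pre-K-graphs in $H'$. Consider $L_x$ and look in $(H')^+=H'+xy$ at the $L_x$-bridge $D$ that contains $y$: it contains $xy$, so it attaches at $x$. Since $U_1\sm U_2\ss\{y\}$, every vertex of $U_1\sm\{u\}$ other than possibly $y$ lies in $U_2$, so starting from $y\in L_y$ the surviving $(U_2\sm\{u\})$-linkage of $L_y$ inside $H_y-E(P_y)$ carries $D$ to all those vertices, while $y$ itself (if it lies in $U_1$) is already in $D$; hence $D$ meets all of $U_1\sm\{u\}$. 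From there the surviving $(U_1\sm\{u\})$-linkage of $L_x$ inside $H_x-E(P_x)$, whose paths are internally disjoint from $L_x$, carries $D$ to every branch vertex (resp.\ open branch) on the boundary cycle of $L_x$. Together with the attachment at $x$ this shows $D$ is a principal $L_x$-bridge in $(H')^+$, so $L_x$ is a pre-K-graph in $H'$; the argument for $L_y$ is symmetric, and hence $H'$ is a proper subbase.

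For the ``moreover'' clause, suppose for contradiction that $L_x$ is a K-graph in $G$ (the case of $L_y$ is symmetric). Then $L_x$ has a principal $L_x$-bridge $B$ in $G$, and $L_x\cup B$ contains a Kuratowski subgraph $K$. Now contract the feet and linking paths of the $(U_2\sm\{u\})$-linkage of $L_y$ together with a path joining the interior of $L_y$, through $xy$, to $L_x\cup B$, and then through $H_x-E(P_x)$ out to $U_2\sm\{u\}$ (possible since $U_2\sm\{u\}\ss(U_1\sm\{u\})\cup\{x\}$ and $H_x-E(P_x)$ links the boundary of $L_x$ to $U_1\sm\{u\}$). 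By the remark following the definition of a linkage, $L_y$ becomes a subdivision of $K_5$ or $K_{3,3}$ in the resulting minor $G/F$. One checks, again using Lemma~\ref{lm-interiors}, that this Kuratowski subgraph meets $K$ in at most one half-open branch, so Lemma~\ref{lm-two-k-graphs}(ii) gives $\eg(G/F)\ge2$. But $G/F$ is a minor of $G$ with $\eg(G)=1$ and $\egp(G)=2$: if $F=\emptyset$ this already contradicts $G\in\S_1$, and if $F\neq\emptyset$ then contracting any single edge $e\in F$ produces $\mu G$ with $\eg(\mu G)\ge\eg(G/F)\ge2$, contradicting (C1). Hence neither $L_x$ nor $L_y$ is a K-graph in $G$.

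I expect the real work to be the bridge-surgery bookkeeping in the second paragraph --- verifying that after deleting $E(P_x)\cup E(P_y)$ the bridge $D$ really does attach to every required branch of $L_x$ while staying disjoint from the interior of $L_x$, which needs one to trace exactly how the linking paths of $H_y-E(P_y)$ sit relative to $L_x$ (controlled by Lemma~\ref{lm-interiors}) --- and, in the third paragraph, routing the path from $L_y$ to $U_2\sm\{u\}$ so that the two Kuratowski subgraphs overlap only in the way allowed by Lemma~\ref{lm-two-k-graphs}(ii). The hypotheses $U_1\sm U_2\ss\{y\}$, $U_2\sm U_1\ss\{x\}$ and the removability of \emph{both} $u$-feet are exactly what make these two routings legitimate.
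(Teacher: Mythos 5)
Your construction of the proper subbase follows essentially the same route as the paper: the paper notes that WLOG $u\notin V(L_x)$ (since $L_x\cap L_y=\emptyset$), so the linking path of $H_x$ from $L_x$ to $u$ is nontrivial, and deletes only its first edge $v_1v_2$ at $L_x$ (which sidesteps your degenerate case); the verification that both cores remain pre-K-graphs in $H'{}^+$ — via the edge $xy$, the other core's surviving linkage, and the hypotheses $U_1\sm U_2\ss\{y\}$, $U_2\sm U_1\ss\{x\}$ — is the same as yours. So the first claim is in order.

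The ``moreover'' clause is where your proof has a genuine gap, and it also misses the short argument the first part was set up for. The paper applies the \emph{same} deletion to $G$: with $G'=G-v_1v_2$, the subgraph $H'$ is a base of $G'$, so $\egp(G')\ge2$; and if $L_x$ were a K-graph in $G$, the $L_x$-bridge of $G'$ containing $y$ loses at most the attachment $v_1$, which removability of the $u$-foot shows was not needed, so $L_x$ is still a K-graph in $G'$ and $\eg(G')\ge1$. Hence this single deletion decreases neither $\eg$ nor $\egp$, contradicting (C1). Your alternative route — contracting a path from the interior of $L_y$ ``through $xy$'' into $L_x\cup B$ and out through $H_x$ — does not work as written. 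The edge $xy$ is absent from $G$ (and $(xy,/)\notin\M(G)$), so either the Kuratowski structure you build from $L_y$ lives only in $(G/F)^+$, in which case Lemma~\ref{lm-two-k-graphs} yields $\egp(G/F)\ge2$ rather than the claimed $\eg(G/F)\ge2$ and you still owe a proof that $\eg(\mu G)\ge1$ before (C1) is violated; or the connecting path must avoid $xy$, and then its existence is unclear. Worse, the assertion that the resulting Kuratowski subgraph meets $K$ in at most one half-open branch is precisely the hard point and is left unverified: your contracted path enters $L_x$ at $x$ and exits through the boundary of $L_x$ into $H_x$, while $K\ss L_x\cup B$ may contain essentially all of $L_x$, so the two structures can easily share more than one branch. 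This part of the argument needs to be replaced by (or reduced to) the deletion argument above.
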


\begin{proof}
  Since $L_x$ is $U_1$-linked, there are pairwise-disjoint paths $P_v$, $v \in U_1$, connecting $L_x$ and $U_1$.
  Similarly, there are pairwise-disjoint paths $Q_v$, $v \in U_2$ connecting $L_y$ and $U_2$.
  We may assume by symmetry that $u \not\in V(L_x)$. Thus $P_u$ is a non-trivial path (but $Q_u$ may possibly consist of a single vertex, $u$).

  Let $v_1v_2$ be the edge in $P_u$ such that $v_1 \in V(L_x)$ and $H' = H - v_1v_2$. We claim that $H'$ is a base in $G$.
  Since $u$ is a removable foot of $H_x$ and $H_y$ and $v_1v_2 \not\in E(H_y)$, $H_x$ is $(U_1 \sm \{u\})$-linkage of $L_x$ in $H'$ and
  $H_y$ is a $(U_2 \sm \{u\})$-linkage of $L_y$ in $H'$.
  Since, for each $v \in U_1 \sm \{x, u\}$, it holds that $v \in U_2$, there is a path in $H_y$ connecting $v$ and $y$.
  Thus $L_x$ is a pre-K-graph in $H'$. Similarly, $L_y$ is also a pre-K-graph in $H'$.
  We conclude that $H'$ is a base of $G$.
  This proves the first part of the lemma.

  To prove the remaining claim, suppose for a contradiction that $L_x$ is a K-graph in $G$. Let $G' = G - v_1v_2$.
  Since $H'$ is a base of $G'$, we have that $\egp(G') \ge 2$.
  Since $v_1 \in V(L_x)$, the $L_x$-bridge in $G'$ containing $y$ attaches to the same vertices of $L_x$ as the $L_x$-bridge in $G$ containing $y$, except possibly to $v_1$.
  Therefore, $L_x$ is a K-graph in $G'$ as $u$ is a removable foot of $H_x$.
  Thus $\eg(G') \ge 1$ which contradicts (C1).
  The case when $L_y$ is a K-graph in $G$ is done similarly.
\end{proof}

Suppose that $L_x$ and $L_y$ are $k$-separated.
By Lemma~\ref{lm-separating}, there exists a set $U$ of size $k$ such that a statement (i), (ii), or (iii) of that lemma holds.
If (i) holds, then $L_x$ and $L_y$ are blocked from each other by $U$.
Otherwise, we may assume that (ii) holds and $L_x$ is blocked from $L_y$ by $U$ and $L_y$ is blocked from $L_x$ by $U \cup \{x\}$.
By Lemma~\ref{lm-sep-cuts}, $L_x$ admits a $U$-linkage $H_x$ and $L_y$ admits a $U_y$-linkage $H_y$.
Assume that $H_x$ and $H_y$ are minimal (with respect to taking subgraphs).

If $|U| \le 4$, then Lemma~\ref{lm-sep-cuts} asserts that $H_x$ is one of the linkages in Fig.~\ref{fg-separation}.
In that case, let $u_1, \ldots, u_k$ be the vertices of $U$ to which $H_x$ is linked as depicted in Fig.~\ref{fg-separation}.
Similarly, when $|U_y| \le 4$, $H_y$ is one of the linkages in Fig.~\ref{fg-separation}.
Let $u_1', \ldots, u_r'$ be the vertices of $U_y$ in the order in which they are depicted in the picture of $H_y$ in Fig.~\ref{fg-separation}.
In the following series of lemmas we shall describe all cascades that are at most 2-separated.

\begin{figure}[htb]
  \centering
  \includegraphics[width=0.67\textwidth]{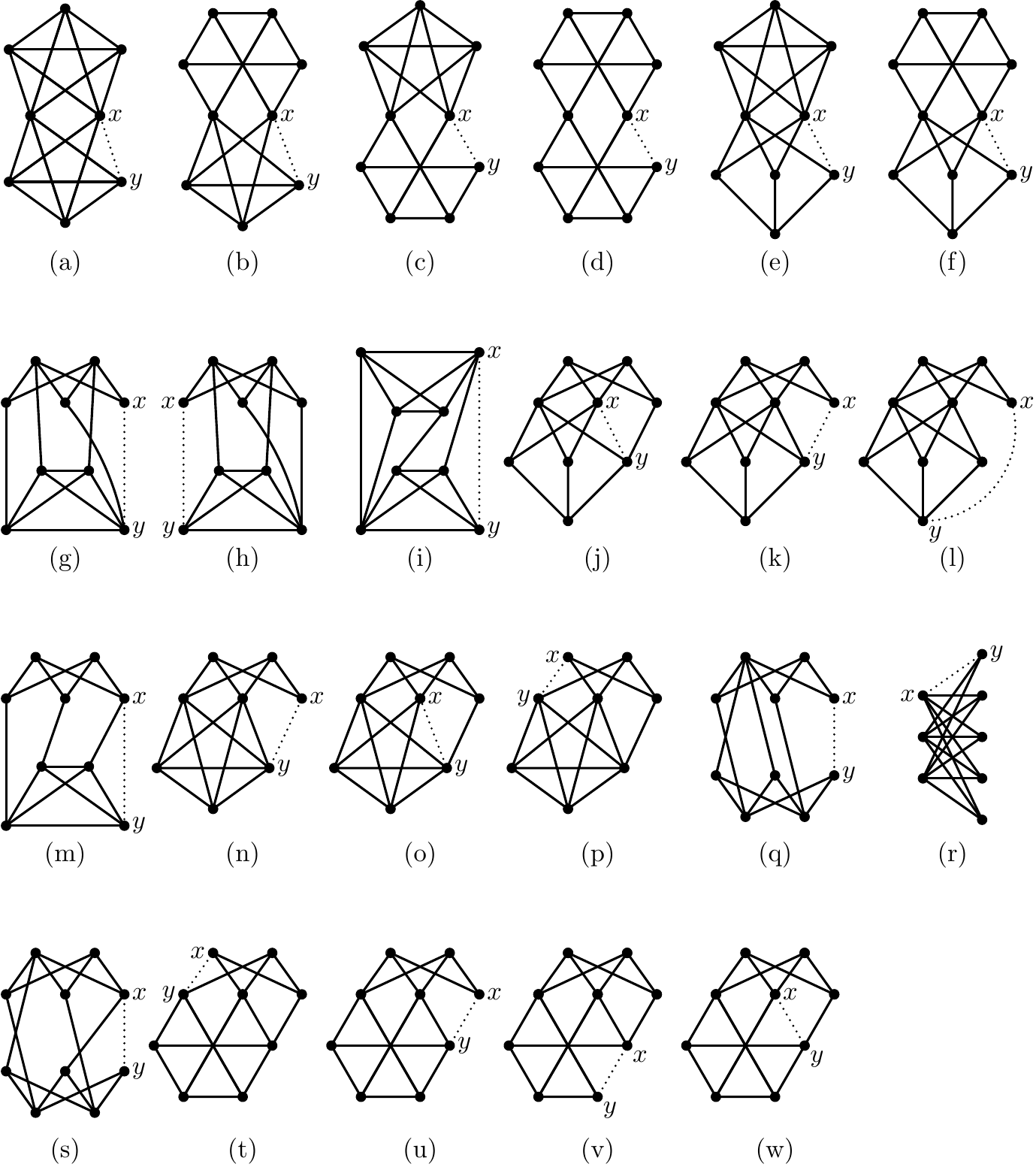}
  \caption{Selected nonplanar graphs in $\Cc_1(\egp)$.}
  \label{fg-selected}
\end{figure}

\begin{lemma}
\label{lm-sep-0}
  $L_x$ and $L_y$ are not\/ $0$-separated.
\end{lemma}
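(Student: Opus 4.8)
The plan is to assume $L_x$ and $L_y$ are $0$-separated and derive that $G$ is $\egp$-critical, contradicting property~(C3). I would begin by applying Lemma~\ref{lm-separating} with $k=0$, so that the set it produces is $U=\emptyset$; in each of its three conclusions the $U$-linked requirements become vacuous, and what survives is that $\{x\}$ separates $L_x$ from $L_y$ in $G$ (conclusions (i),(ii)) or that $\{y\}$ does (conclusion (iii)). Using the symmetry between $x$ and $y$, I may assume $\{x\}$ separates $L_x$ from $L_y$. Since $x\in V(L_x)$, $x\notin V(L_y)$, and both $V(L_x)\setminus\{x\}$ and $V(L_y)$ are nonempty, $x$ is a cut vertex of $G$, so I can write $G=G_1\cup G_2$ with $V(G_1)\cap V(G_2)=\{x\}$, $L_x\subseteq G_1$, $L_y\subseteq G_2$, and $G_1,G_2\neq\{x\}$; as $xy\notin E(G)$, the edge added to form $G^+$ lies on the $G_2$-side, so $G^+=G_1\cup G_2^+$ with $G_2^+=G_2+xy$ and $E(G_1)\cap E(G_2)=\emptyset$.

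Next I would locate the Euler genus of the two sides. Since the Euler genus is additive over blocks (Theorem~\ref{th-stahl-euler}), gluing at the single vertex $x$ gives $\eg(G)=\eg(G_1)+\eg(G_2)$ and $\egp(G)=\eg(G^+)=\eg(G_1)+\eg(G_2^+)$. Because $V(G_1)\cap V(G_2)=\{x\}\subseteq V(L_x)$, no $L_x$-bridge of $G^+$ straddles the cut: every $L_x$-bridge lies entirely inside $G_1$ or inside $G_2^+$, and one inside $G_2^+$ attaches to $L_x$ only at $x$, hence cannot be principal. So a principal $L_x$-bridge lies in $G_1$, which means $L_x$ is already a K-graph in $G_1$; by Lemma~\ref{lm-fund-cycles} a planar graph has no K-graph, so $\eg(G_1)\geq1$. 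Combined with $\eg(G)=1$ and $\egp(G)=2$, this forces $\eg(G_1)=1$, $\eg(G_2)=0$, and $\eg(G_2^+)=1$.

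Finally I would contradict~(C3) by showing $\M(G)=\dc1\egp$, i.e.\ that every minor operation of $G$ decreases $\egp$, so that $G\in\Cc(\egp)$ (recall $G\in\Gcxy$). Every $\mu\in\M(G)$ acts on an edge of $G_1$ or on an edge of $G_2$ (contracting $xy$ never arises, since $xy\notin E(G)$), and either way the splitting at the cut vertex is preserved for $\mu G$, so the same additivity applies. If $\mu$ acts inside $G_1$, then $\eg(\mu G)=\eg(\mu G_1)+\eg(G_2)=\eg(\mu G_1)$ and $\egp(\mu G)=\eg(\mu G_1)+\eg(G_2^+)=\eg(\mu G_1)+1$; since by~(C1) $\mu$ decreases $\eg(G)=1$ or $\egp(G)=2$, necessarily $\eg(\mu G_1)=0$, whence $\egp(\mu G)=1<2=\egp(G)$. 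If $\mu$ acts inside $G_2$, then $\eg(\mu G)=\eg(G_1)+\eg(\mu G_2)\geq\eg(G_1)=1=\eg(G)$, so $\mu$ does not decrease $\eg$, and by~(C1) it must decrease $\egp$. In both cases $\mu\in\dc1\egp$, so $G\in\Cc(\egp)$, contradicting~(C3).

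The step I expect to demand the most care is the bridge analysis in the second part --- confirming that $L_x$ remains a K-graph in $G_1$ and not merely in $G^+$, which rests on $V(G_1)\cap V(G_2)\subseteq V(L_x)$ so that no $L_x$-bridge can cross the cut --- together with the routine bookkeeping that minor operations, the cut-vertex split, and the map $H\mapsto H^+$ all commute (immediate, since $G_1$ and $G_2$ share no edge and $xy\notin E(G)$, with the only wrinkle being that contracting an edge at $x$ merely renames the cut vertex). The genus arithmetic and the two appeals to~(C1) are then straightforward.
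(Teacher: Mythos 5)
Your proof is correct, but it takes a genuinely different route from the paper's. The paper disposes of the $0$-separated case in a few lines: since no path joins the two boundaries, a path from the boundary of $L_x$ to $L_y$ must end in the interior of $L_y$, so $\Bx$ attaches to the interior of $L_y$ and, symmetrically, $\By$ attaches to the interior of $L_x$ --- exactly the configuration forbidden by Lemma~\ref{lm-interiors}. You instead read off from Lemma~\ref{lm-separating} (whose $k=0$ case does degenerate as you describe) that a terminal is a cut vertex separating $L_x$ from $L_y$, split $G$ there, and use block-additivity of the Euler genus (Theorem~\ref{th-stahl-euler}) together with the fact that a K-graph forces $\eg(G_1)\ge 1$ (via Lemma~\ref{lm-fund-cycles}) to show that every minor-operation decreases $\egp$, contradicting (C3). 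Both arguments are sound and both ultimately rest on Lemma~\ref{lm-interiors} (yours only indirectly, through the proof of Lemma~\ref{lm-separating}), so there is no circularity. The paper's argument is shorter and purely combinatorial; yours is longer but isolates the structural reason the configuration is impossible --- a cascade cannot be a $1$-sum at a terminal with all the nonplanarity on one side, since additivity then forces $\egp$-criticality --- and in that sense anticipates the connectivity statement of Lemma~\ref{lm-3-connected}. The two points you rightly flag as needing care are fine: the principal $L_x$-bridge of $G^+$ cannot lie on the $G_2$-side because $G^+-V(L_x)\subseteq G^+-x$ already splits into the two sides, and contractions at the cut vertex preserve the decomposition because the merged vertex remains both a terminal and a cut vertex.
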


\begin{proof}
  Suppose that $L_x$ and $L_y$ are 0-separated.
  Since $L_x$ is an $x$-K-graph in $G$, there is a path $P$ connecting the boundary of $L_x$ to $L_y$ in $G$.
  Since $P$ does not end on the boundary of $L_y$, $P$ ends at a vertex in the interior of $L_y$.
  Thus $\Bx$ is attached to the interior of $L_y$.
  By symmetry, $\By$ is attached to the interior of $L_x$.
  This contradicts Lemma~\ref{lm-interiors}.
\end{proof}

\begin{figure}[htb]
  \centering
  \includegraphics[width=0.58\textwidth]{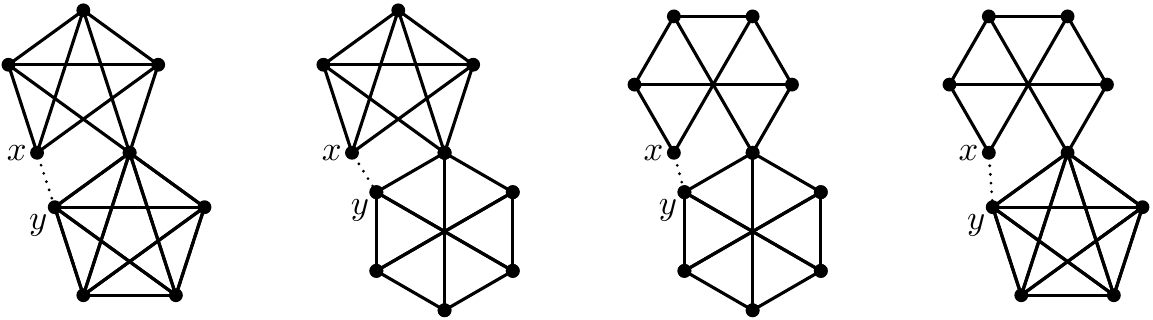}
  \caption{Cascades in $\S_1$ whose $xy$-K-graphs are 1-separated.}
  \label{fg-cascades-1}
\end{figure}

\begin{lemma}
\label{lm-sep-1}
  If $L_x$ and $L_y$ are $1$-separated, then $G$ has one of the graphs in Fig.~\ref{fg-cascades-1} as a minor.
\end{lemma}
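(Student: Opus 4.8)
The plan is to use the separation lemmas to pin down the shape of $G$ around the separating vertex and then to use the criticality of $G$ (conditions (C1)--(C3)) to show that nothing else is present, so that $G$ is forced to be one of finitely many small graphs, which we match against Fig.~\ref{fg-cascades-1}.

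First I would apply Lemma~\ref{lm-separating}: since $L_x$ and $L_y$ are $1$-separated there is a vertex $u$ realizing one of the options (i)--(iii), and by the $x$--$y$ symmetry in those options it suffices to treat case~(i), where $\{u\}$ blocks $L_x$ from $L_y$ and $L_y$ from $L_x$, and case~(ii), where $\{u\}$ blocks $L_x$ from $L_y$ and $\{u,x\}$ blocks $L_y$ from $L_x$. Fix a base $H$ of $G$; since $H$ is a proper subgraph of the nonplanar graph $G$ with $\egp(H)=2$, Lemma~\ref{lm-c2-egp}(a) gives that $H$ is planar. By Lemma~\ref{lm-sep-cuts}, in $H$ the core $L_x$ admits one of the single-foot linkages \gs{a} (if $L_x\cong K_4$) or \gs{b} (if $L_x\cong K_{2,3}$) to $\{u\}$, and $L_y$ admits \gs{a} or \gs{b} to $\{u\}$ in case~(i), or one of \gs{c}, \gs{d}, \gs{e} to $\{u,x\}$ in case~(ii); take these linkages $H_x,H_y$ minimal. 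Because $L_x$ and $L_y$ are $1$-separated, in case~(i) $H_x$ and $H_y$ meet only in $u$, while in case~(ii) they meet only in $u$ and along the $x$-foot of $H_y$, which runs from the boundary of $L_y$ to the interior vertex $x$ of $L_x$.

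Next I would locate the nonplanarity. Since $\eg(G)=1$, $G$ contains a Kuratowski subgraph $K_0$, and by Lemmas~\ref{lm-noncontractible} and~\ref{lm-two-k-graphs}(i) it contains no two disjoint K-graphs, so at most one of $L_x,L_y$ is a K-graph in $G$. Using Lemmas~\ref{lm-k4} and~\ref{lm-interiors} together with the separation structure I would argue that $B_y$ must attach to the terminal in the interior of the core carrying it, so that exactly one of $L_x,L_y$ — say, after relabelling, $L_x$ — is already a K-graph in $G$, with $K_0$ taken inside $L_x$ together with the attachments of $B_y$, and the edge $xy$ being precisely what promotes $L_y$ to a K-graph in $G^+$. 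Now comes the pruning. By (C1), $G$ has minimum degree at least $3$ (contracting an edge at a vertex of degree $2$ is a homeomorphism and changes neither $\eg$ nor $\egp$). Every foot of $H_x$ or $H_y$, and every nontrivial $\{x,y\}$-bridge of $G$ whose $+$ is planar, can be contracted to a single edge, respectively deleted, by Lemma~\ref{lm-planar-patch} without changing $\eg$ or $\egp$; by (C1) such an operation is impossible, so the feet have length $\le 1$, the interiors of $B_x$ and $B_y$ contain nothing beyond the paths forced by the linkages and the one path carrying $K_0$, and there is no superfluous vertex or edge. Hence $G$ is one of a bounded family of graphs carrying disjoint $xy$-K-graphs $L_x,L_y$ that are $1$-separated, with $\egp(G)=2$ and $\eg(G)=1$.

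Finally I would run the finite check. The bounded data are: the isomorphism type ($K_4$ or $K_{2,3}$) of each core and the position of its terminal; the single-foot linkage attaching $L_x$ to $u$; the linkage (\gs{a}--\gs{e}) attaching $L_y$; and the way $B_y$ reaches the terminal of $L_x$. For each of the finitely many resulting graphs one checks that it is nonplanar (else $\eg<1$), that it does not contain two disjoint K-graphs without using $xy$ (else $\eg\ge2$ by Lemma~\ref{lm-two-k-graphs}), and that it does not contain any graph of Fig.~\ref{fg-selected} as a minor (impossible by Lemma~\ref{lm-c2-egp}(b)); the graphs surviving these constraints are exactly those of Fig.~\ref{fg-cascades-1}, so $G$ contains (indeed equals) one of them. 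I expect the pruning step to be the main obstacle: one has to rigorously account for all the slack in $G$ — long feet, fat bridges, extra pendant structure — using the full force of (C1)--(C3) and Lemma~\ref{lm-planar-patch}, and to keep the asymmetry of case~(ii), in which the terminal $x$ appears inside the linkage of $L_y$, under control; the enumeration itself is lengthy but mechanical once the shape is fixed.
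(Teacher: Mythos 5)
Your outline reuses the right tools (Lemmas~\ref{lm-separating}, \ref{lm-sep-cuts}, \ref{lm-c2-egp}), but it diverges from a workable argument at exactly the point where the lemma's content has to be extracted, and the diversion is a real gap rather than a matter of taste.

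In the paper's Case~$U_y = U = \{u_1\}$, the decisive step is to apply Theorem~\ref{th-stahl-euler} at the cut vertex $u_1$: since $\eg$ is additive over blocks and $G$ is nonplanar, one of the $\{u_1\}$-bridges $G_x, G_y$ is nonplanar, and criticality together with Lemma~\ref{lm-two-k-graphs}(ii) then forces that bridge to be exactly $K_5$ or $K_{3,3}$ (if it had a nonplanarity-preserving minor operation $\mu$, then $\mu G$ would still contain a Kuratowski subgraph meeting the K-graph on the other side in at most $u_1$, giving $\egp(\mu G)\ge 2$ and $\eg(\mu G)\ge 1$, violating (C1)). That is what produces the graphs of Fig.~\ref{fg-cascades-1}. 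Your proposal instead tries to locate the Kuratowski subgraph ``inside $L_x$ together with the attachments of $B_y$'' and then prune; but in the configurations actually arising here the Kuratowski subgraph sits entirely inside the $\{u_1\}$-bridge containing $L_y$, not inside $L_x$, and your pruning step (``feet have length $\le 1$, nothing else is present'') is asserted rather than established --- contracting a foot need not leave $\eg$ and $\egp$ unchanged, and Lemma~\ref{lm-planar-patch} applies to nontrivial bridges over a \emph{pair} of vertices, not to the one-vertex cut $\{u_1\}$ you have in this case.

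The second genuine gap is the case $U_y = U\cup\{x\}$. The paper handles it by a short but exhaustive case analysis on which linkage $H_y$ admits (\gs{c}, \gs{d}, \gs{e}) and which of $x,u_1$ plays which role, producing in every sub-case a minor from Fig.~\ref{fg-selected} and hence a contradiction with Lemma~\ref{lm-c2-egp}(b). This case contributes nothing to Fig.~\ref{fg-cascades-1}; it must be killed, not enumerated for survivors. Your proposal folds it into the same ``bounded family, then check'' enumeration without setting up the check that would actually reveal it to be vacuous, and without specifying what the bounded family is. As written, the proposal defers precisely the part of the argument that the lemma consists of; one would need to supply the Stahl--Beineke localization in the one-cut case and the $\Cc_1(\egp)$-minor eliminations in the $\{u_1,x\}$-cut case to close it.
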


\begin{proof}
  We adopt the notation and the assumptions made before Lemma \ref{lm-sep-0}.
  Then we have that $H_x$ admits the linkage \gs{a} or \gs{b}.
  Assume first that $U_y = U = \{u_1\}$.
  Then $H_y$ also admits one of \gs{a} or \gs{b}.
  Let $G_z$ be the $U$-bridge in $G$ containing $L_z$, $z \in \{x, y\}$.
  Since $U$ separates $L_x$ and $L_y$ in $G$, the $U$-bridges $G_x$ and $G_y$ are distinct.
  Since $G$ is nonplanar, one of $G_x$ or $G_y$, say $G_y$ by symmetry, is nonplanar by Theorem~\ref{th-stahl-euler}.
  Suppose that $G_y$ is not isomorphic to a Kuratowski graph. Then there exists a minor-operation $\mu \in \M(G_y)$ such that $\mu G_y$ is nonplanar.
  The graph $\mu G\+$ contains a K-graph and a Kuratowski subgraph whose intersection is either empty or equal to $u_1$.
  Thus, $\egp(\mu G) \ge 2$ by Lemma~\ref{lm-two-k-graphs}(ii), a contradiction with (C1).
  Thus $G_y$ is isomorphic to either $K_5$ or $K_{3,3}$.
  It is not hard to see that $yu_1 \in E(G)$ in both cases.
  We conclude that $G$ has one of the graphs in Fig.~\ref{fg-cascades-1} as a minor.

  Assume now that $U_y = U \cup \{x\}$. In this case $H_y$ is one of \gs{c}, \gs{d}, or \gs{e}.
  Since $L_y$ is linked to $\{u_1, x\}$, there are two choices for the vertices $u_1'$ and $u_2'$.
  In each case, we will be able to find a minor in $G$ isomorphic to one of nonplanar graphs in $\Cc_2(\egp)$ depicted in Figure \ref{fg-selected}. As noted earlier, this contradicts Lemma~\ref{lm-c2-egp}.
  We treat different cases and note that the worst case is always when every foot of the corresponding linkage in Figure \ref{fg-separation} is trivial (i.e. a single vertex), except when this is excluded because that would make $L_x$ and $L_y$ intersect.

  \begin{casesblock}
    \case{$H_y$ is \gs{c}}
    If $u_1' = u_1$ and $u_2' = x$, then $H_y$ contains \gs{d} as a sublinkage (with $u_1'$ being a trivial foot), which is treated in Case 2 below.
    Suppose then that $u_1' = x$ and $u_2' = u_1$.
    If $H_x$ is \gs{a}, then $G$ has~\gsel{a} as a minor.
    If $H_x$ is \gs{b}, then $G$ has~\gsel{b} as a minor.

    \case{$H_y$ is \gs{d}}
    Since \gs{d}\ has a symmetry exchanging its feet, we may assume that $u_1'=u_1$ and $u_2'=x$.
    If $H_x$ is \gs{a}, then $G$ has~\gsel{c} as a minor.
    If $H_x$ is \gs{b}, then $G$ has~\gsel{d} as a minor.

    \case{$H_y$ is \gs{e}}
    If $u_1' = u_1$ and $u_2' = x$, then $H_y$ contains \gs{d} as a sublinkage (having $u_1'$ as a trivial foot), where we contract an edge incident with $y$ and remove the other one.
    Suppose thus that $u_1' = x$ and $u_2' = u_1$.
    If $H_x$ is \gs{a}, then $G$ has~\gsel{e} as a minor.
    If $H_x$ is \gs{b}, then $G$ has~\gsel{f} as a minor.
  \end{casesblock}
\end{proof}

We deal with 2-separated K-graphs similarly.

\begin{lemma}
  \label{lm-2-sep}
  If $L_x$ and $L_y$ are 2-separated, then $G$ has one of the graphs in Fig.~\ref{fg-cascades-2} as a minor.
\end{lemma}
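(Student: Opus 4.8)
The plan is to follow the template of the proof of Lemma~\ref{lm-sep-1}, adapting it to the larger linkages that occur when $k=2$. Adopting the notation fixed before Lemma~\ref{lm-sep-0}, Lemma~\ref{lm-separating} gives a set $U$ of size $2$ for which one of the options (i)--(iii) holds; by the $x$--$y$ symmetry built into those options we may assume that (i) or (ii) holds. In case (i), $U$ separates $L_x$ from $L_y$, and Lemma~\ref{lm-sep-cuts} says that $L_x$ admits a $U$-linkage $H_x\in\{\gs{c},\gs{d},\gs{e}\}$ and $L_y$ admits a $U$-linkage $H_y\in\{\gs{c},\gs{d},\gs{e}\}$. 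In case (ii), $U$ blocks $L_x$ from $L_y$ while $U\cup\{x\}$ (of size $3$) blocks $L_y$ from $L_x$, so $H_x\in\{\gs{c},\gs{d},\gs{e}\}$ and $H_y\in\{\gs{f},\gs{g},\gs{h}\}$ with the terminal $x$ playing the role of one of the three feet of $H_y$. I would take $H_x$ and $H_y$ minimal and fix, in addition, how the two vertices $u_1,u_2$ of $U$ are matched to the remaining feet of $H_y$; exactly as in Lemma~\ref{lm-sep-1}, for each such matching the worst case is the one in which every foot is a single vertex, except where making a foot trivial would force $L_x$ and $L_y$ to meet.

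Given a configuration, the first step is to prune it with Lemma~\ref{lm-removable}. In both cases (i) and (ii) the hypotheses $U_1\sm U_2\ss\{y\}$ and $U_2\sm U_1\ss\{x\}$ hold (in case (i) both differences are empty, in case (ii) only $x$ is added on the $L_y$ side), and $H_x$ and $H_y$ can be taken edge-disjoint. Hence, whenever $H_x$ and $H_y$ share a vertex $u\in U$ at which both $u$-feet are removable, Lemma~\ref{lm-removable} produces a proper subbase, contradicting the minimality of the chosen base; this discards every configuration in which two coincident feet are of the ``free'' type, leaving a short explicit list of configurations for each pair $(H_x,H_y)$.

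For each surviving configuration I would then argue in the spirit of Lemma~\ref{lm-sep-1}. In case (i), $U=\{u_1,u_2\}$ is a $2$-cut separating $L_x$ from $L_y$, so $G$ is assembled from the $U$-bridge $G_x\supseteq L_x$, the $U$-bridge $G_y\supseteq L_y$, and possibly further bridges attaching only inside $U$; since $G$ is nonplanar, at least one of the first two, say $G_y$, remains nonplanar after adding the edge $u_1u_2$. If $G_y$ were not a minimal such graph, some $\mu\in\M(G_y)\ss\M(G)$ would keep $\mu G_y$ nonplanar, and then $\mu G\+$ would contain a K-graph from $L_x\+$ together with a Kuratowski subgraph from $\mu G_y$ meeting it in at most a vertex of $U$, so $\egp(\mu G)\ge2$ by Lemma~\ref{lm-two-k-graphs}(ii) while $\eg(\mu G)\ge1$, contradicting (C1). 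Thus the nonplanar side reduces to one of a few small graphs, and determining how it attaches to $u_1,u_2$ and to the terminal in its interior produces the graphs of Fig.~\ref{fg-cascades-2}. In the remaining configurations---among them all of case (ii)---one contracts the finitely many non-trivial feet and the ``free'' parts and reads off the minor obtained: either it is one of the graphs in Fig.~\ref{fg-cascades-2}, and we are done, or it is a nonplanar graph in $\Cc_1(\egp)$ such as one of those in Fig.~\ref{fg-selected}, which is impossible by Lemma~\ref{lm-c2-egp}(b). Since the actual cascade $G$ realises one of the configurations, it realises a ``good'' one, and therefore $G$ has a graph in Fig.~\ref{fg-cascades-2} as a minor.

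The main obstacle is purely the size of the case analysis: three choices for $H_x$, up to six for $H_y$, several matchings of $u_1,u_2$, and (in case (ii)) three positions for $x$ on $H_y$, and for every surviving combination one must either exhibit the correct target graph from Fig.~\ref{fg-cascades-2} or locate a forbidden minor as in Fig.~\ref{fg-selected}. Lemma~\ref{lm-removable}, together with the symmetries of the linkages in Fig.~\ref{fg-separation}, is what keeps this finite; the real work is the bookkeeping---tracking which feet of each of \gs{c}--\gs{h} are removable, checking that each contraction used is legal (never contracting $xy$, and updating which vertices are terminals), and verifying that the contracted configuration is genuinely isomorphic to the claimed graph. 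All the genus-theoretic content is supplied, exactly as in the $1$-separated case, by Lemmas~\ref{lm-two-k-graphs}, \ref{lm-c2-egp} and \ref{lm-removable} and by property~(C1).
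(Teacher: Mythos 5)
Your plan follows the paper's proof almost line for line: the same split into the two cases $U_y=U$ and $U_y=U\cup\{x\}$ coming from Lemma~\ref{lm-separating}, the same use of Lemma~\ref{lm-sep-cuts} to restrict $H_x$ to \gs{c}--\gs{e} and $H_y$ to \gs{c}--\gs{h}, the same pruning via Lemma~\ref{lm-removable}, the same reduction of the nonplanar $U$-bridge to a minimal graph, and the same terminal dichotomy (either a member of Fig.~\ref{fg-cascades-2} or a forbidden minor from Fig.~\ref{fg-selected} killed by Lemma~\ref{lm-c2-egp}). So the approach is the right one. However, two points keep this from being a proof rather than a proof outline.

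First, in case (i) your genus-theoretic step is stated incorrectly. If $\mu G_y^+$ is nonplanar, its Kuratowski subgraph in general uses the added edge $u_1u_2$, which is absent from $\mu G$; to obtain a Kuratowski subgraph of $\mu G$ one must replace $u_1u_2$ by a $u_1$--$u_2$ path through $G_x$, and that path can run along an entire branch of $L_x$. So the Kuratowski subgraph does \emph{not} meet the K-graph ``in at most a vertex of $U$,'' and Lemma~\ref{lm-two-k-graphs}(ii) does not apply directly; the paper chooses the path to meet $L_x$ in a subpath and, in the degenerate situation where the linkage in $G_x$ is \gs{d} with both feet trivial, has to fall back on alternative (iii) of Lemma~\ref{lm-two-k-graphs}. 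Without this repair the minimality argument for $G_y$ (which is what pins $G_y$ down to the three graphs of $\Cc_0(\egp)$, including the \emph{planar} graphs $K_5-e$ and $K_{3,3}-e$, not merely to Kuratowski graphs) does not go through. Second, the entire case enumeration --- which is the substance of this lemma --- is deferred to ``bookkeeping,'' and your predicted outcome of that bookkeeping is off: in the paper, case (i) \emph{never} produces a cascade (every subcase yields one of \gsel{n}, \gsel{u}, \gsel{k}, \gsel{l} and hence a contradiction with Lemma~\ref{lm-c2-egp}), and the two graphs of Fig.~\ref{fg-cascades-2} arise only in case (ii), when $H_y$ is \gs{g} with $u_2'=x$ and $H_x$ is \gs{c} or \gs{d}. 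Since the lemma's content is precisely this list of which configurations survive, exhibiting those minors and the forbidden minors in the remaining roughly fifteen configurations cannot be omitted.
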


\begin{proof}
  We have that $H_x$ is one of \gs{c}, \gs{d}, or \gs{e}.
  Assume first that $U_y = U = \{u_1, u_2\}$.
  Let $G_z$ be the $U$-bridge containing $L_z$, $z \in \{x, y\}$.
  Since $U$ separates $L_x$ and $L_y$ in $G$, the $U$-bridges $G_x$ and $G_y$ are distinct.
  We will consider $G_x$ and $G_y$ as graphs in ${\mathcal G}_{u_1u_2}$, with terminals $u_1$ and $u_2$.
  Since $G$ is nonplanar, Lemma~\ref{lm-planar-patch} gives that either $G_x^+$ or $G_y^+$ is nonplanar.
  We may assume by symmetry that $G_y^+$ is nonplanar.
  Thus $G_y$ contains a graph in $\Cc_0(\egp)$ as a minor.
  Suppose that there exists a minor-operation $\mu \in \M(G_y)$ such that $\mu G_y^+$ is nonplanar.
  Then $\mu G\+$ contains a K-graph in $G_x$ and a Kuratowski graph that satisfy the conditions of Lemma~\ref{lm-two-k-graphs}(ii) or (iii). (To see this, note that a Kuratowski graph in $\mu G_y^+$ gives rise to a Kuratowski graph in $G^+$ by replacing the edge $u_1u_2$ with a path in $G_x$. The path can be chosen in such a way that it intersects with $L_y$ in a subpath. If this one would not satisfy (ii), then the linkage in $G_x$ is \gs{d} with both feet trivial, and hence we get that (iii) is satisfied.)
  By Lemma~\ref{lm-two-k-graphs}, $\egp(\mu G) \ge 2$, a contradiction.
  We conclude that $G_y$ is isomorphic to one of the three graphs in $\Cc_0(\egp)$ (with terminals $u_1$ and $u_2$). As shown in Part I (see \cite[Figure~1]{MS2014_I}), $G_y$ is isomorphic to $K_5$ minus the edge $xy$, or $K_{3,3}$ minus the edge $xy$, or to $K_{3,3}$ with $x,y$ in the same part.
  Since $L_y$ is 2-linked to $u_1,u_2$, we have that $y \not\in \{u_1, u_2\}$.
  If $H_x$ is \gs{c} or \gs{e}, then $H_x$ contains \gs{d} as a sublinkage.
  Suppose now that $H_x$ is \gs{d}.
  If $G_y$ is isomorphic to $K_5$ minus an edge, then $G$ has~\gsel{n} as a minor.
  If $G_y$ is isomorphic to $K_{3,3}$ minus an edge, then $G$ has~\gsel{u} as a minor.
  If $G_y$ is isomorphic to $K_{3,3}$, then $G$ has~\gsel{k} or \gsel{l} as a minor.
  In each case, we obtain a contradiction by Lemma \ref{lm-c2-egp}.

  Assume now that $U_y = U \cup \{x\}$.
  Hence $H_y$ is one of \gs{f}, \gs{g}, or \gs{h}.
  \begin{casesblock}
    \case{$H_y$ is \gs{f}}
    This case is symmetric.
    If $H_x$ is \gs{c}, then $G$ has~\gsel{i} as a minor.
    If $H_x$ is \gs{d}, then $G$ has~\gsel{m} as a minor.
    If $H_x$ is \gs{e}, then $G$ has~\gsel{v} as a minor (hint: delete two edges in $H_y$).

    \case{$H_y$ is \gs{g}}
    Suppose that $H_x$ is \gs{c}.
    If $u_2' = u_1$, then $G$ has~\gsel{p} as a minor (hint: contract an edge joining $H_x$ and $H_y$).
    If $u_2' = u_2$, then $G$ has \gsel{t} as a minor (hint: delete two edges in $H_x$). If $u_2' = x$, then $G$ has~\gcas{a} as a minor.

\begin{figure}[htb]
  \centering
  \includegraphics[width=0.25\textwidth]{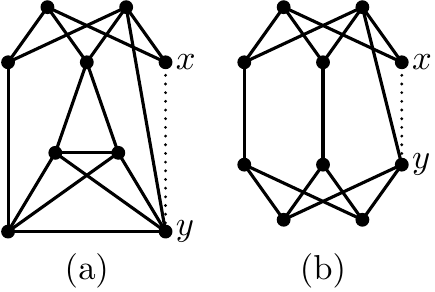}
  \caption{Cascades in $\S_1$ whose $xy$-K-graphs are 2-separated.}
  \label{fg-cascades-2}
\end{figure}

    Suppose now that $H_x$ is \gs{d}.
    If $u_2' = u_1$, then $G$ has~\gsel{t} as a minor.
    If $u_2' = x$, then $G$ has~\gcas{b} as a minor.

    Suppose now that $H_x$ is \gs{e}.
    Since $u_2$-foot is removable in $H_x$ and $u_2'$-foot is removable in $H_y$, Lemma~\ref{lm-removable} asserts that $u_2 \not= u_2'$ (as $L_x$ is a K-graph in $G$).
    If $u_2' = u_1$, then $G$ has~\gsel{v} as a minor (hint: contract one and delete another edge in $H_y$, both incident with the vertex linked to $u_2$).
    If $u_2' = x$, then again, $G$ has \gsel{v} as a minor (hint: delete one and contract the other edge incident with $y$ in $H_y$).

    \case{$H_y$ is \gs{h}}
    Suppose that $H_x$ is \gs{d}.
    If $u_1' = u_1$, then $G$ has~\gsel{w} as a minor (hint: delete one and contract the other edge incident with $y$).
    If $u_1' = x$, then $G$ has~\gsel{r} as a minor.

    Suppose that $H_x$ is \gs{c}.
    If $u_1' = u_1$, then $H_x$ has~\gs{d} as a sublinkage. So this is covered above.
    If $u_1' = u_2$, then $G$ has~\gsel{o} as a minor.
    If $u_1' = x$, then $G$ has~\gsel{r} as a minor.

    Suppose that $H_x$ is \gs{e}.
    Since $u_2$-foot is removable in $H_x$ and $u_2'$-foot and $u_3'$-foot are removable in $H_y$, Lemma~\ref{lm-removable} asserts that $u_1' = u_2$.
    Then $G$ has~\gsel{j} as a minor.
  \end{casesblock}
\end{proof}

For $xy$-K-graphs that are $k$-separated for $k \ge 4$, we shall use the fact that they admit linkages that have many removable feet.

\begin{lemma}
\label{lm-4-linked}
  Suppose that $H$ is a $U$-linkage, where $|U| \ge 4$. Then $H$ has at least $|U|-2$ removable feet.
\end{lemma}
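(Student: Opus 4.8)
The plan is to show that at most two of the $|U|$ feet of $H$ fail to be removable. Since $|U|\ge4$ and, in every setting where this lemma is invoked, $L$ is one of the $xy$-K-graphs $L_x,L_y$, the core $L$ must be homeomorphic to $K_{2,3}$: by Lemma~\ref{lm-k4} a $K_4$-core admits its disjoint linking paths only through the three branch vertices on its boundary cycle, which would force $|U|\le3$. So assume $L\cong K_{2,3}$ with boundary cycle $C$, and let $t_1,t_2$ be the two open branches of $L$ lying on $C$. The feet of $H$ are indexed by the vertices of $U$, and I will bound by $2$ the number of $u\in U$ whose $u$-foot is non-removable.

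The first step is to notice that the ``$U$-linked'' half of the definition of a linkage is inherited for free: if $L$ is $U$-linked in $H$ by disjoint paths $Q_v$ ($v\in U$) from $C$ to $U$, then deleting $Q_u$ shows that $L$ is $(U\setminus\{u\})$-linked in $H$ for every $u$. Hence the $u$-foot is removable precisely when the branch condition survives deletion of $u$, that is, when for each $i\in\{1,2\}$ there is still a path in $H$ from $t_i$ to $U\setminus\{u\}$ that is internally disjoint from $L$. Equivalently, the $u$-foot is non-removable iff for some $i\in\{1,2\}$ every path from $t_i$ to $U$ internally disjoint from $L$ is forced to terminate at $u$ (at least one such path exists by the branch condition for $U$).

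To make this concrete I would pass to the components of $H-V(L)$. For $i\in\{1,2\}$ let $\mathcal D_i$ be the set of those components $D$ of $H-V(L)$ that contain a vertex having a neighbour on the open branch $t_i$, and set $W_i=U\cap\bigcup_{D\in\mathcal D_i}V(D)$. A path from $t_i$ to $U$ internally disjoint from $L$ must leave $L$ into a single component $D\in\mathcal D_i$ and then stay inside $D$ until it hits $U$; conversely every vertex of $W_i$ is the endpoint of such a path. Thus a path from $t_i$ to $U\setminus\{u\}$ internally disjoint from $L$ exists iff $W_i\setminus\{u\}\neq\emptyset$. (Feet with $u\in V(L)$ necessarily lie on $C$ and are treated the same way; one may in fact assume $u\notin V(L)$.) Since $H$ is a $U$-linkage we have $W_i\neq\emptyset$, so the $u$-foot is non-removable iff $W_i=\{u\}$ for some $i$. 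If two distinct feet $u\neq u'$ were both non-removable, the witnessing indices would have to differ, since otherwise $\{u\}=W_i=\{u'\}$; as there are only two branches, at most two feet are non-removable, so at least $|U|-2$ are removable.

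The step I expect to be the crux is the opening reduction to $L\cong K_{2,3}$: a $K_4$-core carries three branch vertices on its boundary cycle, each capable of capturing its own vertex of $U$, and the naive count then only gives $|U|-3$ removable feet — in fact one can build an abstract $K_4$-linkage with $|U|=4$ having just one removable foot. So the statement really does rely on the cascade structure, via Lemma~\ref{lm-k4}, to rule out $K_4$ when $|U|\ge4$. The other place needing care, though routine, is the clean decoupling of the two clauses in the definition of a $(U\setminus\{u\})$-linkage, which is what turns non-removability into the purely connectivity-theoretic condition $W_i=\{u\}$.
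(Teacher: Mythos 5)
Your proof is correct and follows essentially the same route as the paper: both reduce to $L\cong K_{2,3}$ via Lemma~\ref{lm-k4} and then observe that each of the two open branches on the boundary can obstruct the removal of at most one foot, giving at most two non-removable feet. The paper phrases this constructively (rerouting each branch-witness path $Q_j$ along the first disjoint path $P_{i_j}$ it meets), while you phrase it as a uniqueness count; your component-based sets $W_i$ need a trivial adjustment when a vertex of $U$ lies on $L$, but the underlying argument is the same and sound.
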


\begin{proof}
  Let $H$ be a $U$-linkage with core $L$, $|U| = k \ge 4$.
  By Lemma~\ref{lm-k4}, $L\cong K_{2,3}$.
  Let $P_1, \ldots, P_k$ be pairwise disjoint paths connecting $L$ and $U = \{u_1, \ldots, u_k\}$
  and suppose that $P_i$ ends at $u_i$, $i=1, \ldots, k$.
  Since $H$ is a $U$-linkage, there are paths $Q_1$ and $Q_2$ connecting the open branches on the boundary
  of $L$ to $U$.
  For $j=1,2$, let $v_j$ be the first vertex on $Q_j$ that belongs to $P_1\cup P_2\cup \cdots\cup P_k$ when traversing $Q_j$ from $L$ towards $U$. Let $i_j$ be the index such that $v_j\in V(P_{i_j})$.
  It is easy to see that, for $i \not= i_1, i_2$, the $u_i$-foot of $H$ is removable.
  Thus $H$ has at least $k-2$ removable feet.
\end{proof}

Let $\B$ be the set of the five $xy$-labeled graphs depicted in Fig.~\ref{fg-bases}.
A graph $H$ is a \df{planar minor} of $G$ if $H$ is a minor of a planar subgraph of $G$.

\begin{figure}[htb]
  \centering
  \includegraphics[width=0.7\textwidth]{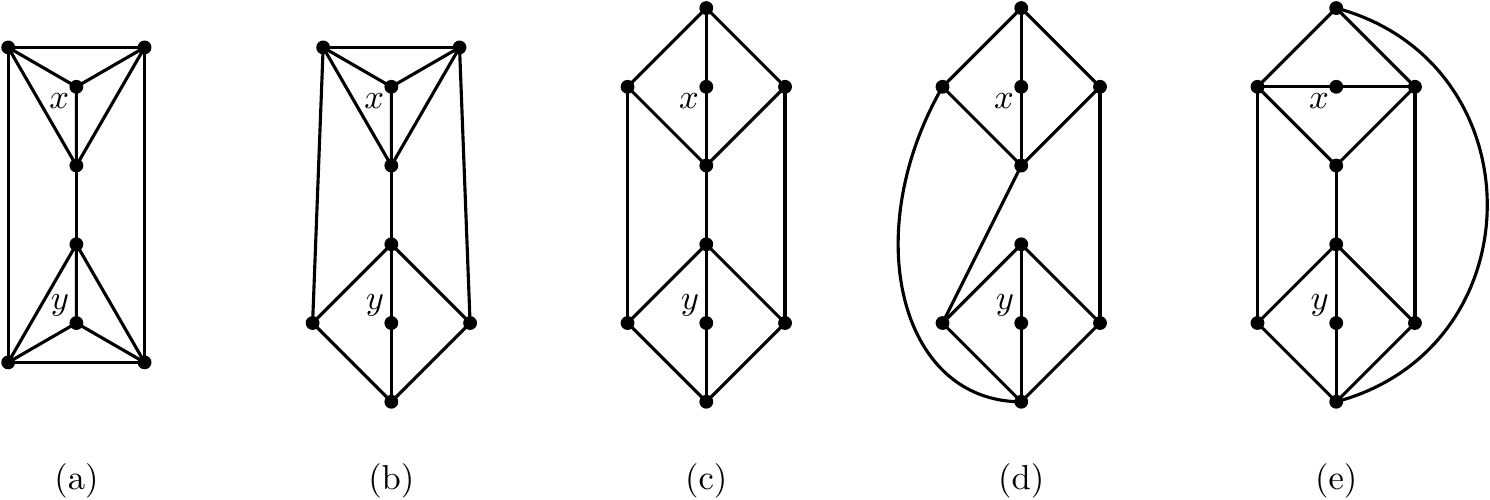}
  \caption{Set $\B$ of bases of cascades in $\S_1$ whose $xy$-K-graphs are $k$-separated for $k \ge 3$.}
  \label{fg-bases}
\end{figure}

\begin{lemma}
\label{lm-3con-bases}
  If $H$ is a base in $G$ such that the $xy$-K-graphs in $H$ are $k$-separated for $k \ge 3$,
  then $G$ contains one of the graphs in $\B$ as a planar minor.
\end{lemma}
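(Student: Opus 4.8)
The plan is to analyze the structure near the two disjoint $xy$-K-graphs $L_x$ and $L_y$ when they are $k$-separated with $k\ge 3$, and to argue that, after contracting down the feet of suitable linkages, the graph $G$ contains one of the five planar bases of Fig.~\ref{fg-bases} as a planar minor. First I would invoke Lemma~\ref{lm-separating}: since $L_x$ and $L_y$ are $k$-separated, there is a set $U$ of size $k$ realizing one of the three blocking configurations (i), (ii), (iii). By Lemma~\ref{lm-interiors} we may assume (using the $x$--$y$ symmetry of the conclusion) that $\By$ is not attached to the interior of $L_x$, so case (ii) or (i) is in force; in case (ii), $L_x$ is blocked from $L_y$ by $U$ and $L_y$ is blocked from $L_x$ by $U\cup\{x\}$. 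By Lemma~\ref{lm-sep-cuts}, $L_x$ admits a $U$-linkage $H_x$ and $L_y$ admits a $U_y$-linkage $H_y$ in the base $H$; take both minimal. Since $k\ge 3$, Lemma~\ref{lm-k4} forces both cores to be homeomorphic to $K_{2,3}$.

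The key step is to exploit Lemma~\ref{lm-4-linked}: when $k\ge 4$, the linkage $H_x$ has at least $k-2\ge 2$ removable feet, and likewise $H_y$ has at least $|U_y|-2$ removable feet. Since $H_x$ and $H_y$ may be assumed edge-disjoint (their interiors lie in different $U$-bridges up to the single special path through $x$), whenever $k\ge 5$ there is a vertex $u\in U_1\cap U_2$ whose $u$-feet in both $H_x$ and $H_y$ are removable, so Lemma~\ref{lm-removable} produces a proper subbase of $H$. Iterating, we reduce to $k\in\{3,4\}$. For these remaining values I would contract every foot of $H_x$ and of $H_y$ down to length zero (permissible because, by the structure of the linkages in Fig.~\ref{fg-separation}, contracting feet is a minor operation that preserves the linkage, exactly as in the statements of Lemmas~\ref{lm-sep-1} and~\ref{lm-2-sep}), taking care that contractions do not force $L_x$ and $L_y$ to meet; the only obstruction to trivializing a foot is when its endpoint is shared between the two linkages and shortening it would merge the cores — precisely the situation already excluded by Lemma~\ref{lm-removable}. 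What remains after trivializing all feet is, in each case, the union of two $K_{2,3}$-cores joined by $k$ (or $k+1$, through $x$) internally-trivial connecting paths, i.e. exactly one of the five labeled graphs in $\B$; one checks directly that each such union is planar, so it is in fact a \emph{planar} minor of $G$.

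The main obstacle I anticipate is the bookkeeping of \emph{which} vertices of $U$ (respectively $U_y$) the feet of $H_x$ and $H_y$ attach to, i.e. the combinatorics of how the two linkages overlap — this is the $k\ge 4$ analogue of the case analysis in Lemma~\ref{lm-2-sep}, where one has to rule out configurations by feeding a forbidden minor from Fig.~\ref{fg-selected} into Lemma~\ref{lm-c2-egp}. Concretely, for $k=3$ and $k=4$ I would enumerate the possible identifications of the linked vertices $u_1,\dots,u_k$ with $u_1',\dots,u_r'$ (as in the notation fixed before Lemma~\ref{lm-sep-0}), discard by Lemma~\ref{lm-removable} those in which a shared removable foot occurs on both sides, and for each surviving configuration exhibit the resulting contracted graph as one of the five bases. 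The nontrivial point is that after all these reductions no genuinely new configuration survives beyond the five in $\B$; this is where the constraint $\egp(G)=2$ (hence no minor from Fig.~\ref{fg-selected}, by Lemma~\ref{lm-c2-egp}(b)) does the final pruning, in tandem with the fact that a base is planar by Lemma~\ref{lm-c2-egp}(a) and the definition of a base forces $L_x,L_y$ to be pre-K-graphs, so every connecting path contributes a genuine linkage path rather than a redundant chord.
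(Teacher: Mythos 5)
Your plan correctly identifies the overall architecture: use Lemma~\ref{lm-separating} and Lemma~\ref{lm-sep-cuts} to extract a $U$-linkage $H_x$ and a $U_y$-linkage $H_y$, kill high $k$ via Lemma~\ref{lm-4-linked} and Lemma~\ref{lm-removable}, and for small $k$ enumerate linkage matchings and prune via Lemma~\ref{lm-c2-egp}(b) against the forbidden minors of Fig.~\ref{fg-selected}. This matches the paper's structure. However, there are two genuine defects.

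First, the claim ``Since $k\ge 3$, Lemma~\ref{lm-k4} forces both cores to be homeomorphic to $K_{2,3}$'' is false. Lemma~\ref{lm-k4} only says that if $L_x\cong K_4$ then $\By$ attaches at branch-vertices of $L_x$; this gives $|U|\le 3$, so $K_4$-cores are excluded only when $k\ge 4$. For $k=3$, a $K_4$-core is perfectly possible, and indeed the linkage \gs{f} of Fig.~\ref{fg-separation} has a $K_4$-core and must be included. Two of the five graphs in $\B$, namely \gbase{a} and \gbase{b}, arise exactly from \gs{f}-type linkages; your plan as stated would never produce them. Related to this, your later summary describing the survivors as ``the union of two $K_{2,3}$-cores joined by $k$ (or $k+1$) internally-trivial connecting paths'' is not an accurate description of $\B$.

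Second, your descent argument using Lemma~\ref{lm-4-linked} and Lemma~\ref{lm-removable} is claimed only ``whenever $k\ge 5$'', but the case $k=4$ with $U_y=U\cup\{x\}$ (so $|U_y|=5$) also needs to be ruled out and requires its own argument: one must extract from the $\ge 3$ removable feet of $H_y$ and the $\ge 2$ removable feet of $H_x$ a common $u\in U$ with both $u$-feet removable, then apply Lemma~\ref{lm-removable}. This is a separate pigeonhole step that your plan currently omits; without it, the $4$-separated analysis is incomplete. Finally, the assertion that ``one checks directly that each such union is planar'' conflicts with the role you later assign to Fig.~\ref{fg-selected}: many of the linkage combinations produce nonplanar graphs in $\Cc_1(\egp)$, and the point is precisely that these yield a contradiction rather than a planar minor. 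You should phrase the enumeration so that it explicitly splits into ``configuration gives a forbidden minor from Fig.~\ref{fg-selected}, contradiction by Lemma~\ref{lm-c2-egp}'' versus ``configuration gives one of \gbase{a}--\gbase{e} as a planar minor''.
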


\begin{proof}
  We may assume that $H$ does not contain a proper subbase that is $l$-separated for some $l \ge 3$.

  Suppose first that $k=3$.
  We have that $H_x$ is one of \gs{f}, \gs{g}, or \gs{h}.
  Assume first that $U_y = U$.
  In this case, $H_y$ is also one of \gs{f}, \gs{g}, or \gs{h}.
  \begin{casesblock}
    \case{$H_y$ is \gs{h}}
    If $H_x$ is \gs{f}, then $G$ has~\gsel{h} as a minor.
    Suppose that $H_x$ is \gs{g}.
    There are two cases by symmetry:
    If $u_1' = u_1$, then $G$ has~\gsel{t} as a minor.
    (Hint: contract one and delete the other edge incident with $y$ in $H_y$.)
    If $u_1' = u_2$, then $G$ has~\gsel{q} as a minor.

    Suppose now that $H_x$ is \gs{h}.
    There are two cases by symmetry:
    If $u_1' = u_1$, then $G$ has~\gsel{w} as a minor.
    (Hint: Let the two neighbors of $x$ and $y$ be $a,b$ and $c,d$, respectively, where $ac$ and $bd$ is part of the linkage. Then we contract the edges $xb$ and $yc$ and delete the edges $xa$ and $yd$. The vertex $u_1'=u_1$ corresponds to the vertex of degree 4 in \gsel{w}.)
    If $u_1' = u_2$, then $G$ has~\gsel{r} as a minor.
    By symmetry, we may assume now that neither $H_x$ nor $H_y$ is \gs{h}.

    \case{$H_y$ is \gs{f}}
    If $H_x$ is \gs{f}, then $G$ has~\gbase{a} as a planar minor.
    If $H_x$ is \gs{g}, then $G$ has~\gbase{b} as a planar minor.
    By symmetry, we may assume now that neither $H_x$ nor $H_y$ is \gs{f}.

    \case{$H_y$ is \gs{g}}
    The only remaining case is when $H_x$ is \gs{g}.
    If $u_2' = u_2$, then $G$ has~\gbase{c} as a planar minor.
    If $u_2' = u_1$, then $G$ has~\gbase{d} as a planar minor.
  \end{casesblock}

  Assume now that $U_y = U \cup \{x\}$.
  Hence $H_y$ is \gs{i}.
  If $u_2' = x$ or $u_4' = x$, then $H_y$ contains linkage \gs{g} and this case was dealt with above.
  We may thus assume that $u_1' = x$.
  If $H_x$ is \gs{f}, then $G$ has~\gsel{g} as a minor.

  Suppose now that $H_x$ is \gs{g}.
  By Lemma~\ref{lm-removable}, $u_2' \not= u_2$ and $u_4' \not= u_2$.
  Thus $u_3' = u_2$ and $G$ has~\gsel{s} as a minor.
  On the other hand, if $H_x$ is \gs{h}, then Lemma~\ref{lm-removable} gives that $u_2', u_4' \not\in \{u_2, u_3, x\}$ which is impossible.

  Suppose now that $k=4$.
  Assume first that $L_x$ and $L_y$ are 4-separated and suppose that $U_y = U$.
  Thus both $H_x$ and $H_y$ are \gs{i}.
  By Lemma~\ref{lm-removable}, $\{u_2', u_4'\} \cap \{u_2, u_4\} = \emptyset$.
  Thus we may assume by symmetry that $u_1' = u_2$, $u_2' = u_3$, $u_3' = u_4$, and $u_4' = u_1$.
  We conclude that $G$ has~\gbase{e} as a planar minor.

  We may assume now that $U_y = U \cup \{x\}$.
  By Lemma~\ref{lm-4-linked}, $H_y$ has three removable feet.
  Since $H_x$ has two removable feet, there exists $u \in U$ such that the $u$-feet of $H_x$ and $H_y$ are removable.
  By Lemma~\ref{lm-removable}, this contradicts our initial assumption that $H$ does not contain a proper subbase that is $3$-separated.

  Assume now that $k > 4$.
  By Lemma~\ref{lm-4-linked}, there are at most two elements $u$ in $U$ such that either the $u$-foot of $H_x$ or the $u$-foot of $H_y$ is not removable.
  Since $|U| > 4$, there exists $u' \in U$ such that the $u'$-feet of $H_x$ and $H_y$ are removable.
  By Lemma~\ref{lm-removable}, there is a proper subbase of $H$ that is $(k-1)$-separated, a contradiction with our initial assumption about $H$.
\end{proof}

\section{Nonplanar extensions of planar bases}
\label{sc-ext}

Let $\B^*$ be the class of planar graphs that contain a graph in $\B$ as a minor and that are deletion-minimal. These graphs are obtained from $\B$ by splitting vertices of degree 4 in all possible ways such that planarity and minimality are preserved.
It is not hard to check that $\B^*$ contains only five graphs that are not contained in $\B$ (see Fig.~\ref{fg-b-star}).
In this section, we describe the minimal nonplanar graphs that contain a subgraph homeomorphic to a graph in $\B^*$.
Having this description, we use computer to determine the class $\S_1$. The graphs in $\S_1$ that have
a subgraph homeomorphic to a graph in $\B^*$ are depicted in Fig.~\ref{fg-cascades-3}.

\begin{figure}[htb]
  \centering
  \includegraphics[width=0.7\textwidth]{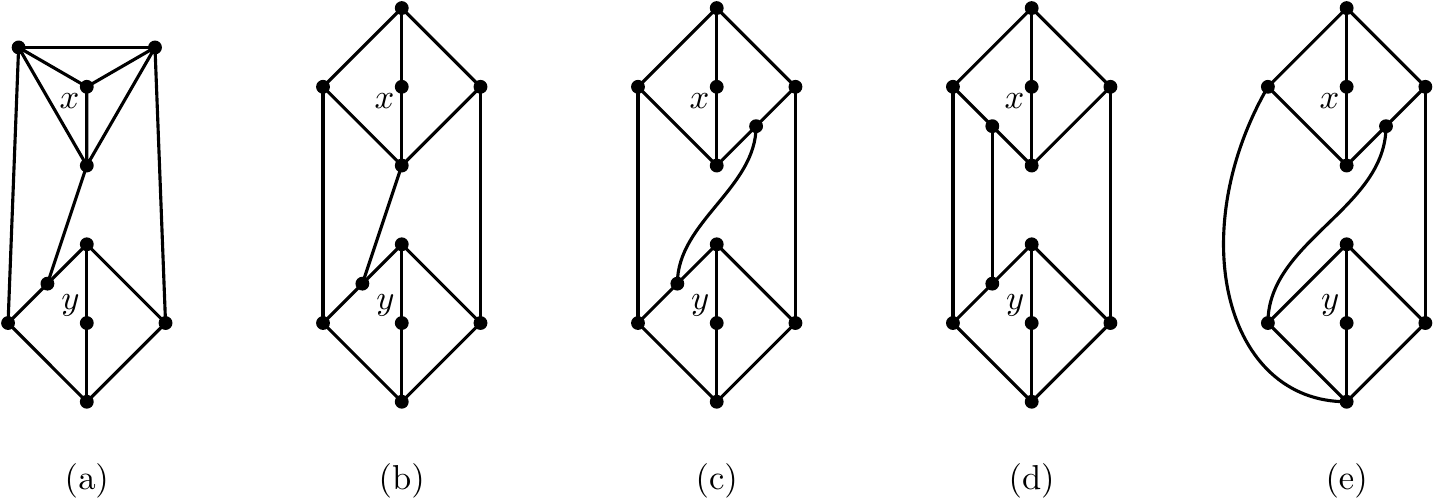}
  \caption{The class $\B^* \sm \B$.}
  \label{fg-b-star}
\end{figure}

Let $H_0$ be a subdivision of $K_{3,3}$, let $v$ be a branch vertex of $H_0$, and let $u_1, u_2, u_3$ be the neighbors of $v$.
The graph $H = H_0 - v$ is called a \df{tripod}. The three (possibly trivial) paths in $H$ with ends $u_1, u_2, u_3$, respectively, are the \df{feet} of $H$.
We say that $H$ is \df{attached} to a subgraph $K$ of $G$ if $H$ is contained in a $K$-bridge $B$, $u_1, u_2, u_3$ are attachments of $B$, and $B$ has no other attachments.
We use the following classical theorem (see~\cite[Theorem~6.3.1]{mohar-book}).

\begin{theorem}
  \label{th-disk-ext}
  Let $G$ be a connected graph and $C$ a cycle in $G$.
  Let $G'$ be a graph obtained from $G$ by adding a new vertex joined to all vertices of $C$.
  Then $G$ can be embedded in the plane with $C$ as an outer cycle unless $G$ contains
  an obstruction of the following type:
  \begin{enumerate}[label=\rm(\alph*)]
  \item
    disjoint paths whose ends are on $C$ and their order on $C$ is interlaced (disjoint crossing paths),
  \item
    a tripod attached to $C$, or
  \item
    a Kuratowski subgraph contained in a 3-connected block of $G'$ distinct from
    the 3-connected block of $G'$ containing $C$.
  \end{enumerate}
\end{theorem}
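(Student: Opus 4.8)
The plan is to reduce everything to a statement about planarity of $G'$. First observe that, since $C$ has length at least $3$ and $w$ is joined to every vertex of $C$, the subgraph $W:=C\cup\{w\}$ is a wheel and hence $3$-connected; thus in any plane embedding of $G'$ the embedding of $W$ is forced, $C$ bounds a face on the side away from $w$, and every vertex of $G$ lies on that side. Consequently $G'$ is planar if and only if $G$ admits a plane embedding with $C$ as outer cycle, so it suffices to show that $G'$ is nonplanar exactly when one of (a), (b), (c) occurs. For the easy direction I would check that each configuration forces nonplanarity of $G'$. A Kuratowski subgraph inside a $3$-connected block of $G'$ other than the one containing $C$ is already a Kuratowski subgraph of $G'$. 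A tripod attached to $C$ has its three feet ending at vertices $u_1,u_2,u_3\in V(C)$, and together with $w$ it yields a subdivision of $K_{3,3}$ in $G'$: one side consists of $w$ and the two degree-$3$ vertices of the $K_{2,3}$ inside the tripod, the other side is $\{u_1,u_2,u_3\}$, and the three $u_iu_j$-connections are supplied by the arcs of $C$. Finally, two disjoint crossing paths are two chords of $C$ with interlaced ends; in any plane embedding of $G'$ the wheel $W$ forces $C$ to bound a disk and $w$ to lie in the complementary region, whose subdivision by the edges at $w$ admits no chord of $C$ with non-adjacent ends, so both chords would have to lie inside the disk bounded by $C$ --- impossible since their ends interlace.

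For the main direction, suppose $G'$ is nonplanar and no Kuratowski subgraph of $G'$ lies in a $3$-connected block of $G'$ disjoint from $C$, so that (c) fails; the goal is to produce (a) or (b). After discarding components of $G$ disjoint from $C$ and the subgraphs of $G$ separated from $W$ by a cut vertex or a $2$-cut --- all of which must be planar, for otherwise a Kuratowski subgraph inside one of them would witness (c), and all of which can be re-embedded into suitable incident faces afterwards --- we may assume that $G$ is connected, that every $C$-bridge of $G$ has at least two attachments on $C$, and that every $C$-bridge of $G$ is planar, while still $G$ has no plane embedding with $C$ as outer cycle. By the classical bridge-placement criterion, this forces either two $C$-bridges to overlap or some single $C$-bridge to fail to be embeddable in a closed disk with its attachments appearing on the boundary in the cyclic order induced by $C$ (otherwise the bridges' attachment intervals form a laminar family, so the bridges can be nested inside the disk with $w$ in the outer region).

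It remains to convert such a violation into (a) or (b), and this is the part I expect to be the main obstacle. If two bridges skew-overlap, two disjoint interior paths realizing the interlacement give (a) directly. If two bridges share three common attachments, then either one of them contains three internally disjoint interior paths to those attachments, which together with the three paths of the other bridge form a tripod attached to $C$ and hence give (b), or, by Menger's theorem, that bridge admits a $2$-separation splitting the three attachments as $2+1$ and one reroutes into the skew-overlapping case. If a single $C$-bridge $B$ is not disk-realizable in its attachment order, then since $B$ is planar the obstruction is internal, and the same analysis applied inside $B$ yields either disjoint crossing paths with ends on $C$ (giving (a)) or a tripod attached to $C$ (giving (b)). The delicate point throughout is precisely this last reduction: it is a miniature instance of the theorem for a single bridge with few attachments, and it hinges on a careful Menger-type dichotomy between ``the bridge has an interior vertex dominating three of its attachments'' (which yields the $K_{2,3}$-core of a tripod) and ``the bridge admits a small separation'' (which permits rerouting into a pair of interlaced disjoint paths). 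Keeping track of the attachments through these reductions, and verifying that the laminar placement in the positive direction uses only bridges with at least two attachments, are the steps requiring the most care.
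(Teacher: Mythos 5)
First, a point of reference: the paper does not prove this statement at all --- it is quoted as a known result from Mohar and Thomassen \cite[Theorem~6.3.1]{mohar-book} --- so there is no internal proof to compare with, and your bridge-decomposition strategy is indeed the standard route. The decisive gap in your sketch is the single-bridge case. After your reductions you are left with a $C$-bridge $B$ that is planar as an abstract graph but admits no embedding in a closed disk with its attachments on the boundary in the cyclic order inherited from $C$, and you dispose of it with ``the same analysis applied inside $B$.'' But everything you have actually argued concerns how \emph{several} bridges interact via the overlap graph; it says nothing about the internal structure of one bridge, and there is no cycle inside $B$ to which the argument could be re-applied. Extracting crossing paths or a tripod from such a $B$ is precisely the technical core of the theorem (in the book it occupies the bulk of the relevant section, via an induction on the bridge structure of $B$ relative to its attachments), and your sketch restates the theorem for a single bridge rather than proving it. A related soft spot: a bridge with exactly two attachments $u,v$ such that $B+uv$ is nonplanar yields neither crossing paths nor a tripod; it is covered by (c) only if one reads ``Kuratowski subgraph contained in a 3-connected block of $G'$'' as referring to the 3-connected components together with their virtual edges, and your ``discard the 2-cut pieces'' step silently relies on that reading.

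Second, your treatment of two bridges sharing three attachments does not deliver obstruction (b) as defined in this paper. The union of a triad in $B_1$ and a triad in $B_2$ on the common attachments $v_1,v_2,v_3$ is a $K_{2,3}$-subdivision meeting $C$ exactly in $\{v_1,v_2,v_3\}$, but it is spread over \emph{two} $C$-bridges, whereas the paper's definition of ``tripod attached to $C$'' requires containment in a single $C$-bridge having exactly those three attachments. (Concretely: take $C$ a triangle $v_1v_2v_3$ plus two further vertices each joined to $v_1,v_2,v_3$; this has no embedding with $C$ outer, no interlaced disjoint paths, no single-bridge tripod, and $G'$ is 3-connected.) So either the theorem must be read with the looser notion of attachment from the book --- in which case your construction is correct but you should say which definition you are using --- or this case is an actual hole relative to the statement as written. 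Two smaller repairs: the Menger alternative you offer in that case (a $2$-separation splitting the attachments $2+1$, then ``reroute into the skew-overlapping case'') never arises, since a connected bridge with three attachments always contains a triad on them (take the minimal subtree of a spanning tree of the bridge connecting the three attachments; they are its leaves); and the $K_{3,3}$ you exhibit in the easy direction for a tripod is misdescribed --- the correct parts are $\{w,a_2,a_3\}$, where $a_2,a_3$ are the two surviving branch vertices on the deleted vertex's side, versus the three branch vertices of the other side, with $w$ reaching the latter through the feet; arcs of $C$ are not needed.
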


We formalize homeomorphisms of graphs as follows.
Let $G, H$ be graphs. A mapping $\hem$ with domain $V(H) \cup E(H)$ is called a \df{homeomorphic embedding} of $H$ into $G$
if for every two vertices $v, v'$ and every two edges $e, e'$ of $H$
\begin{enumerate}[label=\rm(\roman*)]
\item
  $\hem(v)$ is a vertex of $G$, and if $v,v'$ are distinct then $\hem(v),\hem(v')$ are distinct,
\item
  if $e$ has ends $v, v'$, then $\hem(e)$ is a path in $G$ with ends $\hem(v), \hem(v')$,
  and otherwise disjoint from $\hem(V(H))$, and
\item
  if $e,e'$ are distinct, then $\hem(e)$ and $\hem(e')$ are edge-disjoint,
  and if they have a vertex in common, then this vertex is an end of both.
\end{enumerate}
We shall denote the fact that $\hem$ is a homeomorphic embedding of $H$ into $G$ by writing $\hem: H \homeo G$.
If $K$ is a subgraph of $H$, then we denote by $\hem(K)$ the subgraph of $G$ consisting of all vertices $\hem(v)$,
where $v \in V(H)$, and all vertices and edges that belong to $\hem(e)$ for some $e \in E(K)$.
Note that $\hem(V(K))\subseteq V(\hem(K))$ mean different sets. It is easy to see that $G$
has a subgraph homeomorphic to $H$ if and only if there is a homeomorphic embedding $H \homeo G$.
An \df{$\hem$-bridge} is an $\hem(H)$-bridge in $G$; an \df{$\hem$-branch} is an image of an edge of $H$.
A bridge is \df{local} if all its vertices of attachment are on a single branch $\hem(e)$, $e\in E(H)$.

The following result is well-known (see~\cite{mohar-book}, Lemma 6.2.1).

\begin{lemma}
\label{lm-no-local-bridges-3con}
Let $H$ be a graph with at least three vertices and $\hem$ a homeomorphic embedding of $H$ into a 3-connected graph $G$.
Then there exists a homeomorphic embedding $\hem'$ such that:
\begin{enumerate}[label=\rm(\roman*)]
\item
  $\hem(v) = \hem'(v)$ for each $v \in V(H)$.
\item
  $\hem'(e)$ is a path that is contained in the union of $\hem(e)$ and all local $\hem(e)$-bridges.
\item
  There are no local $\hem'$-bridges.
\end{enumerate}
\end{lemma}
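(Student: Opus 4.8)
The plan is to pick an extremal homeomorphic embedding and let extremality force (iii). For each edge $e = vv'$ of $H$ let $K_e$ be the subgraph of $G$ consisting of the branch $\hem(e)$ together with all local $\hem(e)$-bridges. Since a local $\hem(e)$-bridge has all its attachments on $\hem(e)$, and two branches of $\hem(H)$ meet only in branch vertices, the subgraphs $K_e$ (over $e \in E(H)$) pairwise intersect only in branch vertices, and $\bigcup_e K_e$ is $\hem(H)$ together with all its local bridges. Let $\mathcal{E}$ be the family of all homeomorphic embeddings $\hem'$ of $H$ into $G$ with $\hem'(v) = \hem(v)$ for every $v \in V(H)$ and $\hem'(e) \ss K_e$ for every $e \in E(H)$. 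This family is nonempty ($\hem \in \mathcal{E}$), and by the intersection property any choice of a path from $\hem(v)$ to $\hem(v')$ inside $K_e$, one for each edge $e=vv'$, does assemble into a genuine homeomorphic embedding. Choose $\hem' \in \mathcal{E}$ minimizing the total number of branch edges $\sum_{e \in E(H)} |E(\hem'(e))|$. Properties (i) and (ii) then hold by construction, and it remains to verify (iii).

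Suppose for contradiction that $B$ is a local $\hem'$-bridge, with all attachments on one branch $P := \hem'(e)$, $e = vv'$, written $P = w_0 w_1 \cdots w_k$ with $w_0 = \hem(v)$, $w_k = \hem(v')$; let $w_p, w_q$ ($p<q$) be the first and last attachments of $B$ along $P$. If $B$ is trivial it is the chord $w_p w_q$; as $G$ is simple, $q \ge p+2$, and replacing the subpath of $P$ between $w_p$ and $w_q$ by this chord produces a branch with strictly fewer edges that is still contained in $K_e$ (an isolated chord lying on a single branch is itself a trivial local $\hem(e)$-bridge, hence an edge of $K_e$), contradicting minimality. If $B$ is nontrivial, then $B$ has at least three attachments: otherwise $\{w_p,w_q\}$ would separate $B^\circ$ (nonempty) from $\hem'(H)\sm\{w_p,w_q\}$ (nonempty since $|V(H)|\ge 3$ guarantees a branch vertex off $P$), contradicting $3$-connectivity. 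In this case one re-routes $P$ through $B$: choose a path $R\ss B$ from $w_p$ to $w_q$ and set $P' := P[w_0,w_p]\cup R\cup P[w_q,w_k]$, obtaining, with the other branches unchanged, another member of $\mathcal{E}$; one then needs $R$ chosen so that $|E(R)| < q-p$, which again contradicts minimality. Containment $R\ss K_e$ follows since $B$ is local to $\hem'(e)$ with interior disjoint from $\hem(H)$, so $B$ lies in $\bigcup_f K_f$ and, its attachments being on $\hem'(e)\ss K_e$, in fact $B\ss K_e$.

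The main obstacle is precisely the last point for a nontrivial bridge: a shortest $w_p$--$w_q$ path through $B$ is not automatically shorter than the interior segment $P[w_p,w_q]$, and this is exactly where $3$-connectivity is indispensable (it rules out ``degenerate'' bridges whose absorption merely swaps a bridge vertex for a path vertex without progress). The resolution is to use an internal attachment $w_j$ ($p<j<q$) of $B$: if no re-routing through $B$ strictly lowers the edge count, then an innermost piece of $B$ together with an interior segment of $P$ is joined to the rest of $G$ through only two vertices, a forbidden $2$-cut; otherwise one directly produces a $w_p$--$w_q$ path in $B$ of length at most $q-p-1$. With the contradiction reached in every case, $\hem'$ has no local bridges, proving (iii).
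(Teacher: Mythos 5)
Your overall strategy (fix the branch vertices, vary only the paths inside the subgraphs $K_e$, and argue by extremality) is a reasonable opening move, and your handling of a trivial local bridge is fine: a chord $w_pw_q$ with $q\ge p+2$ does strictly shorten the branch. The difficulty you yourself flag — that for a nontrivial local bridge $B$ a rerouting through $B$ need not lower $\sum_e|E(\hem'(e))|$ — is, however, not resolved by the sketch you give. Your proposed resolution asserts that if no rerouting through $B$ strictly lowers the edge count, then ``an innermost piece of $B$ together with an interior segment of $P$'' yields a $2$-cut. That implication is false. Take $P=\hem'(e)=w_0w_1w_2w_3w_4$, let $B^\circ$ be a path $u_1u_2u_3$ with edges $w_0u_1$, $w_2u_2$, $w_4u_3$ (so $B$ attaches at $w_0,w_2,w_4$, $p=0$, $q=4$), and let $w_1$ have a neighbour $z$ in some non-local bridge to ensure $3$-connectivity. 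Every $w_0$--$w_4$ path through $B^\circ$ has length $4=q-p$, and every ``sub-rerouting'' (e.g.\ $w_0$--$w_2$ or $w_2$--$w_4$ through $B$) is strictly longer; yet there is no $2$-cut. At the same time, rerouting via $w_0u_1u_2u_3w_4$ does eliminate the local bridge — the leftover $\{w_1,w_2,w_3\}$ is non-local because of $z$ — while leaving the total edge count unchanged. Thus the minimum of your potential is attained both by an embedding with a local bridge and by one without, so ``choose a minimizer'' does not prove (iii).

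To close the gap you need a different potential (or a secondary tie-breaking criterion) that is actually forced to decrease. Two things your argument does correctly but does not exploit are: (a) a nontrivial local bridge must have at least three attachments (you prove this from $3$-connectivity), and (b) $3$-connectivity forces some interior vertex $w_j$ of $P(w_p,w_q)$ to be an attachment of a bridge reaching outside $P[w_p,w_q]\cup B$ — and this is exactly what makes the leftover segment non-local after you reroute. A correct proof must track that information rather than the raw edge count; minimizing the number of vertices lying in local bridges (or the number of local bridges), possibly lexicographically after your edge-count criterion, is the direction to pursue. Note the paper itself does not prove this lemma but cites it as Lemma 6.2.1 of the Mohar--Thomassen monograph, so there is no in-paper argument to compare against; the argument there is a careful bridge-rerouting argument, not a single numerical minimization.
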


In order to apply Lemma~\ref{lm-no-local-bridges-3con} to a base in $\B^*$, we need to assure that
new homeomorphic embedding still maps terminals to terminals. We will need the following lemmas.

\begin{lemma}
  \label{lm-cascade-kur}
  Suppose that $G \in \S_1$ has a base homeomorphic to a graph in $\B^*$ and that $K$ is
  a Kuratowski subgraph of $G$. If none of the branch vertices of $K$ lie in $L_x$, then two
  of its open branches intersect $L_x$. The same holds for the intersection of $K$ with $L_y$.
\end{lemma}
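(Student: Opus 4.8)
The plan is to argue by contradiction using the fact (from Lemma~\ref{lm-two-k-graphs}) that a cascade $G \in \S_1$ cannot contain a Kuratowski subgraph together with a K-graph whose intersection is too small. Suppose $K$ is a Kuratowski subgraph of $G$, none of whose branch vertices lie in $L_x$, and suppose for contradiction that at most one open branch of $K$ meets $L_x$. Recall that $L_x$ is an $x$-K-graph, so there is a principal $L_x$-bridge $B_y$ attaching to all the relevant branch-vertices or open branches on the boundary of $L_x$; thus $L_x$ is a K-graph in $G^+$ (and, after suitable contractions, close to a K-graph in $G$). The idea is that if $K$ intersects $L_x$ in at most one branch $P$ of $K$, then we can find a K-graph $L \cong L_x$ (or a contraction of it) that is essentially disjoint from a Kuratowski subgraph obtained from $K$, and then invoke Lemma~\ref{lm-two-k-graphs}(ii) or (iii) to conclude $\egp(G) \ge 2$ in a way that also forces $\eg(G) \ge 1$ (since one full Kuratowski subgraph survives in $G$ itself), contradicting (C1).

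First I would set up the case analysis on how $K$ meets $L_x$. Since no branch vertex of $K$ lies in $L_x$, any component of $K \cap L_x$ is a subpath of a single branch of $K$; the hypothesis to be contradicted is that this intersection lies within (at most) one such branch $P$, with ends $u,v$ say. Next I would use the base structure: because $G$ has a base homeomorphic to a graph in $\B^*$, the bridge structure around $L_x$ is controlled, and in particular $L_x$ has the principal-bridge attachments guaranteed by its being an $x$-K-graph. I would then produce a K-graph $L$ from $L_x$ (possibly contracting an edge of $L_x$ incident with $x$, as in the proofs of Lemmas~\ref{lm-k4} and~\ref{lm-interiors}) so that $L$ is a genuine K-graph in $G$ or in $G/e$ for a single edge $e$, and so that $L$ is disjoint from $K - \{u\}$ or from $K$ entirely, depending on the subcase. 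The key point is that deleting one endpoint of the branch $P$ from $K$ still leaves a K-graph (this is exactly the mechanism in the proof of Lemma~\ref{lm-two-k-graphs}(ii)), and this surviving K-graph is then disjoint from $L$.

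Once I have the pair (Kuratowski subgraph, K-graph) with intersection contained in at most one half-open branch, Lemma~\ref{lm-two-k-graphs}(ii) gives $\eg$ of the ambient graph $\ge 2$; applied to $G$ itself (if no contraction was needed) this already contradicts $\eg(G)=1$, and applied to $G/e$ it gives $\eg(G/e)\ge 1$ while also (since both a Kuratowski subgraph and a K-graph persist, one of them full) $\egp(G/e) \ge 2$, contradicting (C1). If the intersection path $P$ has both ends on the same branch of $L_x$ we use (ii); if the ends lie on different branches we instead invoke (iii) with $L \cong K_{2,3}$. Finally, the statement for $L_y$ follows verbatim by the symmetry between $x$ and $y$. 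The main obstacle I anticipate is the bookkeeping in the subcase where $K$'s intersection with $L_x$ forces a contraction of an edge of $L_x$: I must check that after contracting such an edge $e$, $K$ (or $K$ minus one vertex) is still an honest Kuratowski subgraph and is still disjoint from the contracted copy of $L_x$, and that $e$ can always be chosen incident with $x$ so that the contracted $L_x$ remains a K-graph — this is where the minimality of $L_x$ and the base-in-$\B^*$ hypothesis do the real work.
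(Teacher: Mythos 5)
Your proposal follows essentially the same line as the paper: argue by contradiction (assuming $K$ meets $L_x$ only within one open branch), use the $\B^*$ structure to locate an edge $e$ incident with $L_x$ whose contraction leaves $L_x$ an $x$-K-graph and a Kuratowski subgraph meeting it in at most one half-open branch, and then apply Lemma~\ref{lm-two-k-graphs} to force $\egp(G/e)\ge 2$ while $G/e$ stays nonplanar, contradicting (C1). The paper compresses your anticipated case bookkeeping into the phrase ``by inspection of graphs in $\B^*$'' and always performs a contraction rather than splitting off a no-contraction subcase, but the mechanism is the one you describe.
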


\begin{proof}
  Assume for a contradiction that $K$ is disjoint from $L_x$
  except possibly for an open branch $P$ of $K$.
  By inspection of graphs in $\B^*$, we see that there is an edge $e$ incident with $L_x$ such that
  $L_x$ is an $x$-K-graph in $G/e$ and there is a Kuratowski subgraph $K'$ in $G /e$
  that shares at most one half-open branch with $L_x$.
  By Lemma~\ref{lm-two-k-graphs}, $\egp(G/e) \ge 2$.
  Since $G/e$ is nonplanar, this contradicts the condition (C1) from the definition of cascades.
\end{proof}

\begin{lemma}
  \label{lm-3-connected}
  Let $U$ be a vertex-cut in $G \in \S_1$. If\/ $|U| \le 2$, then each nontrivial $U$-bridge in $G$ contains either $x$ or $y$.
\end{lemma}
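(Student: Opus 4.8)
The plan is to assume for contradiction that $B$ is a nontrivial $U$-bridge with $|U|\le2$ containing neither $x$ nor $y$, and to produce a minor-operation decreasing neither $\eg$ nor $\egp$, contradicting (C1). Degenerate possibilities are easy to remove: if a terminal lies in $U$, then a nontrivial $U$-bridge attaching at it already contains it, and a $U$-bridge attaching only at the non-terminal vertex of $U$ is a bridge of a $1$-cut missing both terminals; so we may assume $U\cap\{x,y\}=\emptyset$ and $x,y\notin V(B)$. Write $U=\{u_1\}$ or $\{u_1,u_2\}$, let $B^+=B+u_1u_2$ (with $B^+=B$ when $|U|=1$), and let $J=(G-B^\circ)+u_1u_2$ (resp.\ $G-B^\circ$); since $B^\circ\ne\emptyset$, $J$ is a proper minor of $G$. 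If $B^+$ is planar, Lemma~\ref{lm-planar-patch} applied to $B$ as a bridge of $G$ and of $G^+$ (with $u_1,u_2$ playing the role of $x,y$) gives $\eg(J)=\eg(G)$ and $\egp(J)=\egp(G)$; choosing $\mu\in\M(G)$ acting on an edge of $B$ (legitimate, as $xy\notin E(B)$) with $J$ a minor of $\mu G$, monotonicity forces $\eg(\mu G)=\eg(G)$ and $\egp(\mu G)=\egp(G)$, contradicting (C1). So we may assume $B^+$ is nonplanar.

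Using the disjoint $xy$-$K$-graphs $L_x,L_y$ from Lemma~\ref{lm-disjoint-xy-k-graphs}, I would first show $\eg(J^+)=2$, where $J^+=J+xy$. The point is that a $2$-connected subdivision of $K_4$ or $K_{2,3}$, whose only $2$-cuts are pairs of endpoints of a single branch, can meet $V(B^\circ)$ only by being a $K_4$-subdivision with $\{u_1,u_2\}$ among its branch vertices and one branch running through $B^\circ$ (for $K_{2,3}$ this is impossible, since then no $L$-bridge of $G^+$ could reach that open branch without passing through $u_1$ or $u_2\in V(L)$). As $L_x,L_y$ are disjoint, at most one of them meets $B^\circ$; replacing in that one the branch through $B^\circ$ by the edge $u_1u_2$, and rerouting the principal bridges through $u_1u_2$ rather than through $B^\circ$, yields disjoint $K$-graphs in $J^+$, so $\eg(J^+)\ge2$ by Lemma~\ref{lm-two-k-graphs}(i); being a minor of $G^+$, $\eg(J^+)=2$. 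Consequently, for every edge $e$ of $B$ (equivalently, every edge meeting $B^\circ$), $J$ — hence $J^+$ — is a minor of $G/e$ (collapse the connected, still-attached $B/e$ to the edge $u_1u_2$), so $\egp(G/e)\ge2=\egp(G)$; since $(e,/)\in\M(G)$, (C1) forces $\eg(G/e)<\eg(G)=1$, i.e.\ \emph{$G/e$ is planar for every edge $e$ of $B$}.

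Now $G$ is nonplanar, so pick a Kuratowski subgraph $K$ of $G$ that is a subdivision. If $K$ misses $B^\circ$ it survives in every $G/e$; if it meets $B^\circ$ it is, by the $2$-cut analysis, a $K_5$- or $K_{3,3}$-subdivision crossing $U$ along one branch whose interior lies in $B^\circ$, and contracting one edge of that branch leaves a Kuratowski subgraph in $G/e$; either way $G/e$ is nonplanar, a contradiction. So $V(K)\subseteq V(B^\circ)\cup U$, and, iterating, every vertex of $B^\circ$ lies in $K$ with all its incident edges in $K$ (otherwise contracting an edge at it keeps $K$); since any $K_5$- or $K_{3,3}$-subdivision meeting $V(B^\circ)\cup U$ with all $B^\circ$-vertices of full degree in it must be $K_5$ or $K_{3,3}$ itself, we get $B=K$ or $B=K-u_1u_2$ with $K\in\{K_5,K_{3,3}\}$ and $|B^\circ|\ge3$. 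After relabelling (and reducing to $|U|=1$ whenever $B$ attaches to only one vertex of $U$), only finitely many configurations remain: $|U|=1$ with $B\in\{K_5,K_{3,3}\}$, and $|U|=2$ with $B$ one of $K_5-u_1u_2$, $K_{3,3}-u_1u_2$, or $K_{3,3}$ with $u_1,u_2$ in one part.

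It remains to exclude these. When $|U|=1$, Theorem~\ref{th-stahl-euler} gives $\eg(G-B^\circ)=\eg(G)-\eg(B)=0$ and $\eg((G-B^\circ)+xy)=\egp(G)-\eg(B)=1$; but $L_x,L_y$ lie in $G-B^\circ$ and remain disjoint $K$-graphs in $(G-B^\circ)+xy$ (their principal bridges never need $B^\circ$, which dangles at a cut vertex), so $\eg((G-B^\circ)+xy)\ge2$ by Lemma~\ref{lm-two-k-graphs}(i), a contradiction. When $|U|=2$, $G$ is obtained by gluing the fixed small graph $B$ to the planar graph $G-B^\circ$ along $U$, and — since $B^+$ is nonplanar, so $B$ does not embed in a disk with $u_1,u_2$ on its boundary — the constraint $\egp(G)=2$ forces $(G-B^\circ)+xy$ to be very restricted (planar with $u_1,u_2$ at face-distance at most $1$, or of Euler genus $1$ with a $u_1,u_2$-cofacial embedding); in either case, contracting an edge of $G-B^\circ$ far from $u_1,u_2$ changes neither the face-distance of $u_1,u_2$ nor the relevant Euler genus, hence decreases neither $\eg$ nor $\egp$, contradicting (C1). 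The main obstacle is precisely this last step: the patch lemma yields no lower bound on genus, so one needs either the (standard but fiddly) additivity estimates for Euler genus under $2$-sums, or — in the spirit of Section~\ref{sc-ext} — a small machine check of the handful of remaining graphs $B$.
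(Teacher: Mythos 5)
Your opening reduction (if $B^+$ is planar, collapse $B$ to the edge $u_1u_2$ and use Lemma~\ref{lm-planar-patch} to see that neither $\eg$ nor $\egp$ drops) matches the paper, which likewise begins by establishing that $B$ (for $|U|=1$) or $B+uv$ (for $|U|=2$) is nonplanar, via Theorem~\ref{th-stahl-euler} and Lemma~\ref{lm-planar-patch} respectively. After that you diverge, and the divergence is where the gap lies. Because you only ever contract edges \emph{of $B$}, the sole tool available to you is Lemma~\ref{lm-two-k-graphs}(i) applied to the two K-graphs sitting entirely outside $B$; this yields $\egp(G/e)\ge2$ but says nothing about $\eg(G/e)$, so you are driven to conclude that $G/e$ is planar for every $e\in E(B)$ and then to classify $B$ outright. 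The missing idea is to contract an edge $e$ of $L_x$ instead, after choosing notation so that $|V(L_y)\cap U|\le 1$. Then $G/e$ is still nonplanar, because $B$ together with a $u_1$--$u_2$ path outside $B^\circ$ contains a Kuratowski subgraph $K$ untouched by $e$; and $(G/e)^+$ contains $K$ and the K-graph $L_y$ meeting in at most one vertex or one half-open branch, so Lemma~\ref{lm-two-k-graphs}(ii) gives $\egp(G/e)\ge2$. That single minor-operation contradicts (C1) and finishes the proof with no case analysis on $B$ at all. You never invoke part (ii) of Lemma~\ref{lm-two-k-graphs}, and that is precisely the tool that makes the lemma short.

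As written, your argument is not a complete proof. The $|U|=2$ endgame --- excluding $B$ isomorphic to $K_5-u_1u_2$, $K_{3,3}-u_1u_2$, or $K_{3,3}$ glued along a $2$-cut --- is only sketched (``contracting an edge of $G-B^\circ$ far from $u_1,u_2$ changes neither\ldots''), and you concede it is the main obstacle; no such edge is exhibited, and contracting inside $L_x$ or $L_y$ can very well change the invariants you need preserved. The step that pins $B$ down is also unsound as stated: if $e\in E(B)\setminus E(K)$ has \emph{both} ends on the Kuratowski subgraph $K$, contracting $e$ identifies two vertices of $K$ and need not preserve nonplanarity (identifying two branch vertices of a $K_5$-subdivision produces a planar graph), so ``contracting an edge at it keeps $K$'' fails and you cannot conclude that every vertex of $B^\circ$ has all its incident edges in $K$. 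Both difficulties disappear once you adopt the contraction in $L_x$ described above.
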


\begin{proof}
  Let $B$ be a nontrivial $U$-bridge that contains neither $x$ nor $y$.
  If $|U| = 1$, let $G_1 = G - B^\circ$.
  If $U = \{u, v\}$ has size 2, let $G_1 = G - B^\circ + uv$.
  Since Kuratowski graphs are 3-connected, $G_1$ contains the same disjoint $xy$-K-graphs as $G$.
  Thus $\egp(G_1) = 2$ by Lemma~\ref{lm-two-k-graphs}(i). Since $G_1$ is a proper minor of $G$, $\eg(G_1) = 0$ by (C1).
  If $|U| = 1$, Theorem~\ref{th-stahl-euler} implies that $B$ is nonplanar since $G$ is nonplanar.
  If $|U| = 2$, then Lemma~\ref{lm-planar-patch} implies that $B +uv$ is nonplanar since $G$ is nonplanar.
  We may assume by symmetry that $|V(L_y) \cap U| \le 1$.
  Let us now consider an edge $e \in E(L_x)$ (with $e\ne uv$ if $|U| = 2$) and the graph $G_0 = G / e$.
  The graph $G_0$ is nonplanar since it contains $B$ or $B+uv$ as a minor.
  Also $G_0^+$ contains a Kuratowski subgraph in $B$ and a K-graph $L_y$ that intersect in at most one vertex
  or in at most one half-open branch.
  Lemma~\ref{lm-two-k-graphs}(ii) gives that $\egp(G_0) = 2$. This is a contradiction with (C1).
\end{proof}

\begin{lemma}
  \label{lm-k23-local}
  Let $H$ be a base of a graph\/ $G \in \S_1$, and $\hem: H \homeo G$ a homeomorphic embedding of $H$ in $G$.
  If $L_x\cong K_{2,3}$ and $P$ is the branch of $L_x$
  that contains the interior of $L_x$, then
  there are no local $\hem$-bridges with attachments only on $P$.
\end{lemma}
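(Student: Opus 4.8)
The plan is to argue by contradiction: suppose that $L_x\cong K_{2,3}$, that $P$ is the branch of $L_x$ containing the interior of $L_x$ (so $x$ lies in the interior of $P$), and that there is a local $\hem$-bridge $D$ whose attachments all lie on $P$. The key observation is that such a local bridge can be ``routed around'' the terminal $x$ in a way that modifies $\hem$ without destroying the property that $H$ maps to a base. Concretely, since $D$ is a $\hem(P)$-bridge with at least two attachments $a,b$ on $\hem(P)$, the segment of $\hem(P)$ between $a$ and $b$ together with a path through $D^\circ$ forms a cycle, and we would like to reroute $\hem(P)$ through $D^\circ$. The complication is precisely that $x$ may sit on the segment of $\hem(P)$ between $a$ and $b$ and must remain on the image of $P$.

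First I would split into two cases according to whether $x$ lies strictly between some pair of consecutive attachments of $D$ on $\hem(P)$. If $x$ does not lie strictly between two consecutive attachments, then we may pick two consecutive attachments $a,b$ of $D$ with the $ab$-segment $\hem(P)[a,b]$ of $\hem(P)$ avoiding $x$; replacing $\hem(P)[a,b]$ by a path through $D^\circ$ yields a new homeomorphic embedding $\hem'$ with $\hem'(v)=\hem(v)$ for all $v\in V(H)$, so $\hem'$ still maps $H$ onto a base, and $D$ (or part of it) is no longer local. Iterating, we may assume $D$ has been absorbed; but then $D$ was not needed, contradicting that it is an $\hem$-bridge (this is essentially the standard ``no local bridges'' argument of Lemma~\ref{lm-no-local-bridges-3con}, except we must be careful to keep terminals fixed). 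So the real content is the second case, where $x$ lies strictly between two consecutive attachments $a,b$ of $D$ on $\hem(P)$.

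In that case the point is to derive that $G$ contains too much structure. The subgraph $L_x' = (L_x - \hem(P)[a,b]) \cup \hem(P)[a,x] \cup \hem(P)[x,b]$ together with a path $R$ through $D^\circ$ joining $a$ and $b$: here $\hem(P)[a,x]\cup\hem(P)[x,b]\cup R$ contains a cycle $C_D$ through $x$, and together with the rest of $L_x$ (the two branches on the boundary cycle $C$ of $L_x$) we obtain an $x$-K-graph in which $x$ is an internal vertex and which properly ``uses'' $D^\circ$. More usefully, contract the edge of $\hem(P)$ incident with $x$ toward $a$ (or toward $b$): call the contracted graph $G/e$. Then $L_x/e$ is an $x$-K-graph in $G/e$ — in fact, because $D$ attaches on both sides of $x$ along $P$, after the contraction $D$ becomes a principal bridge making $L_x/e$ into a \emph{K-graph} of $G/e$. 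Since $L_y$ is still a disjoint $y$-K-graph, $G/e$ contains a K-graph $L_x/e$ and a disjoint pre-K-graph $L_y$, so by Lemma~\ref{lm-two-k-graphs}(i) $\egp(G/e)\ge 2$; and since $L_x/e$ is an honest K-graph in $G/e$ we get $\eg(G/e)\ge 1$. This contradicts (C1).

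The main obstacle I expect is the bookkeeping in the second case: verifying that the contraction of $e$ genuinely turns $L_x/e$ into a K-graph of $G/e$ (one must check that $D$, now attaching to the branch vertices / open branches appropriately, really is a principal $L_x/e$-bridge, and that $D$ is disjoint from $L_y$, which follows since $D$ is a \emph{local} $\hem$-bridge on $P\subseteq L_x$ and $L_x,L_y$ are disjoint in the base $H$). One also has to rule out degenerate configurations — e.g. $D$ being a trivial bridge, or $a$ or $b$ coinciding with a branch vertex of $\hem(L_x)$ — but in each such case the same contraction argument, possibly applied to a different edge of $\hem(P)$ incident with $x$, still produces a K-graph disjoint from $L_y$, hence a contradiction with (C1) via Lemma~\ref{lm-two-k-graphs}.
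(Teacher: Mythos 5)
Your overall strategy (exhibit a minor-operation that decreases neither $\eg$ nor $\egp$, using the surviving disjoint $xy$-K-graphs to certify $\egp\ge 2$) is the right one, but both halves of the execution have genuine gaps. The fatal one is in your second case: the claim that, after contracting an edge $e$ of $\hem(P)$ incident with $x$, the local bridge $D$ ``becomes a principal bridge making $L_x/e$ into a K-graph of $G/e$'' is false. A principal bridge of a subgraph homeomorphic to $K_{2,3}$ must attach to all three open branches; $D$ attaches only to the single branch $P$, and contracting an edge of $P$ cannot change which branches $D$ attaches to. In fact $L_x\cup R$ (with $R$ a path through $D^\circ$) is planar and contains no Kuratowski subgraph, so a local bridge on $P$ is never by itself a source of nonplanarity. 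Without $\eg(G/e)\ge 1$ you get no contradiction: (C1) is perfectly consistent with $\egp(G/e)\ge2$, it simply forces $G/e$ to be planar. The paper's proof runs in the opposite direction: it deletes an edge $e$ of the local bridge (the base survives, so $\egp(G-e)\ge2$ and (C1) forces $G-e$ to be \emph{planar}), and then locates the nonplanarity of $G$ itself in the bridge of the boundary cycle $C$ of $L_x$ that contains the local bridge; Theorem~\ref{th-disk-ext} applied to $C$ then produces a Kuratowski subgraph of $G$ meeting $L_y$ in at most one path, contradicting Lemma~\ref{lm-cascade-kur}. (A separate argument, via Lemma~\ref{lm-3-connected}, is needed when the bridge attaches only at the two ends of $P$, a degenerate case you mention but do not actually resolve.)

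Your first case is also not a proof of the statement. The lemma asserts that \emph{the given} $\hem$ has no local bridges on $P$; rerouting $\hem(P)[a,b]$ through $D^\circ$ merely produces a different embedding $\hem'$, and the sentence ``then $D$ was not needed, contradicting that it is an $\hem$-bridge'' is not a contradiction of anything --- $D$ remains a local $\hem$-bridge for the original $\hem$, and for $\hem'$ the displaced segment of $\hem(P)$ typically becomes a new local bridge on $\hem'(P)$, so the iteration does not terminate in the desired conclusion. Even for the weaker existential statement that would suffice in the application (Lemma~\ref{lm-ext}), this rerouting argument only postpones the problem to your second case, which is where the real content lies and where the argument breaks down as explained above.
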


\begin{proof}
  Let $C$ be the boundary of $L_x$ which consists of the $\hem$-branches $P_1, P_2$.
  Assume first that there is an $\hem$-bridge $B_0$ that attaches only at the ends $w_1, w_2$ of $P$.
  By Lemma~\ref{lm-3-connected}, $B_0$ is trivial and consists of the edge $w_1w_2$.
  Let $G' = G - w_1w_2$. Since $\egp(G') \ge 2$, we have that $G'$ is planar by (C1).
  Since $G$ is nonplanar, there are paths $P_3$ and $P_4$ connecting $P - w_1 - w_2$
  to $P_1 - w_1 - w_2$ and $P_2 - w_1 - w_2$, respectively.
  Let $P_5$ be a path in $\By$ connecting $P_1 - w_1 - w_2$ and $P_2 - w_1 - w_2$.
  The planarity of $G'$ implies that $P_3$ and $P_4$ are internally disjoint from $L_y$, and therefore
  $L_x \cup P_3 \cup P_4 \cup P_5 \cup w_1w_2$ contains a Kuratowski subgraph $K$.
  The intersection of $K$ with $L_y$ is contained in $P_5$.
  This contradicts Lemma~\ref{lm-cascade-kur}.

  We may assume now that all $\hem$-bridges that are attach to $P$, have a vertex of attachment in the interior of $P$.
  Let $B'$ be a local $\hem$-bridge with an attachment $t\in V(P)\setminus \{w_1,w_2\}$.
  Let $G'$ be the graph obtained from $G$ by deleting an edge $e$ of $B'$ incident with $t$.
  Since $\egp(G') \ge 2$, we have that $G'$ is planar by (C1). Let $B$ be the $C$-bridge containing $\cup B'$.
  Since $G$ is nonplanar, $B$ cannot be drawn inside
  a disk with $C$ on the boundary. By Theorem~\ref{th-disk-ext}, there are three possibilities.
  The option (iii) contradicts Lemma~\ref{lm-cascade-kur}.
  Suppose that (i) holds and let $P_3, P_4$ be a pair of crossing paths.
  Since $B$ is connected, there is a path $P_5$ connecting interiors of $P_3$ and $P_4$.
  Thus $C \cup P_3 \cup P_4 \cup P_5$ is a $K_{3,3}$-minor which contradicts Lemma~\ref{lm-cascade-kur}.
  Suppose now that (ii) holds and there is a tripod $T$ in $C\cup B$.
  If $T$ has a foot of nonzero length, then $C \cup T$ contains a $K_{3,3}$-minor.
  Otherwise, there is a path $P_5$ connecting the two triads that $T$ consists of.
  Hence $C \cup T \cup P_5$ contains a $K_5$-minor.
  In both cases, Lemma~\ref{lm-cascade-kur} yields a contradiction.
\end{proof}

Let $H$ be a planar 3-connected graph and $\hem$ a homeomorphic embedding of $H$ into $G$.
A well-known result of Tutte~\cite{tutte-1963} says
that $\hem(H)$ has a unique embedding in the plane where each face is a cycle.
Let us call each such a cycle an \df{$\hem$-face}.
An \df{$\hem$-path} is a path in $G$ with ends in $\hem(H)$ but otherwise disjoint from $\hem(H)$.
An \df{$\hem$-jump} is an $\hem$-path such that no $\hem$-face includes both of its ends.

An \df{$\hem$-cross} consists of two disjoint $\hem$-paths $P_1, P_2$ with ends $u_1, v_1$ and $u_2, v_2$ (respectively) on a common $\hem$-face such that
the ends appear in the interlaced order $u_1, u_2, v_1, v_2$ on the boundary of the face.
An $\hem$-cross $P_1, P_2$ is \df{free} if neither $P_1$ nor $P_2$ has its ends on $\hem(e)$ for a single $e \in E(H)$
and, whenever the ends of $P_1$ and $P_2$ are in $V(\hem(e_1)) \cup V(\hem(e_2))$ for $e_1, e_2 \in E(H)$, then
$e_1$ and $e_2$ have no end in common.

An \df{$\hem$-triad} is an $\hem(H)$-bridge $B$ with three attachments that consists of three internally disjoint paths $P_1, P_2, P_3$
connecting the attachments to a vertex $v \in V(G) \sm V(\hem(H))$.
Furthermore, every pair of attachments of $B$ lie on a common $\hem$-face but no $\hem$-face contains all the attachments.

An \df{$\hem$-tripod} in $G$ is a tripod whose feet are in $\hem(H)$, but none of its other vertices or edges is in $\hem(H)$.
Let $C$ be an $\hem$-face and $v_1, v_2, v_3 \in V(C)$ branch-vertices of $\hem(H)$.
Let $Q$ be the union of one or two $\hem$-branches, each with both ends in $\{v_1, v_2, v_3\}$.
A \df{weak $\hem$-tripod} is a tripod $B$ in $G$ with attachments $v_1,v_2,v_3$ such that $B\cap \hem(H) = Q\cup \{v_1,v_2,v_3\}$ (see Figure \ref{fg-weak-tripod}, where $v_1,v_2,v_3$ correspond to the square vertices).

We will use the following well-known result.

\begin{lemma}
  \label{lm-faces}
  Let $G$ be a subdivision of a 3-connected plane graph.
  Then each pair of intersecting faces of $G$ share either a single branch-vertex or a single branch.
\end{lemma}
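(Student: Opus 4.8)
\emph{Plan of proof.} The plan is to pass to the underlying $3$-connected plane graph and then to manufacture a $2$-separation from a Jordan curve built inside two faces.

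First I would use that $G$ is a subdivision of some $3$-connected plane graph $G_0$, and that subdividing edges does not change the combinatorial map: the faces of $G$ correspond bijectively to the faces of $G_0$, branches of $G$ to edges of $G_0$, branch-vertices of $G$ to vertices of $G_0$, and the boundary of a face of $G$ is the subdivision of the boundary of the corresponding face of $G_0$. An interior vertex of a branch has degree $2$ and is incident only with the two faces flanking that branch, so if two faces of $G$ meet in such a vertex they already meet in the whole branch; this rules out the remaining shapes of an intersection that is not a single branch-vertex or a single branch. Thus it suffices to prove: \emph{in a $3$-connected plane graph $G_0$ the intersection of the boundaries of two distinct faces is empty, a single vertex, or a single edge.} By Tutte's theorem each face of $G_0$ is bounded by a cycle; write $C_1,C_2$ for the boundary cycles of two distinct faces $f_1,f_2$. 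Distinct faces have distinct boundary cycles (otherwise $G_0$ would be a single cycle, contradicting $3$-connectedness), so $C_1\ne C_2$ and $C_1\cap C_2$ is a proper subgraph of the cycle $C_1$, hence a disjoint union of paths.

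Now suppose for contradiction that $C_1\cap C_2$ is neither empty, nor a single vertex, nor a single edge. A short case analysis on $|V(C_1)\cap V(C_2)|$, using that $G_0$ is simple and $C_1\ne C_2$, shows that after possibly swapping $C_1$ and $C_2$ we can choose two distinct vertices $a,b\in V(C_1)\cap V(C_2)$ that are non-consecutive on $C_1$ (i.e., not joined by an edge of $C_1$); indeed, if every pair of common vertices were consecutive on both cycles, then three such vertices would force $C_1$ and $C_2$ to be the same triangle, and two such vertices would force $C_1\cap C_2$ to be exactly one edge. Join $a$ to $b$ by an arc $\gamma_1$ through the open face $f_1$ and by an arc $\gamma_2$ through the open face $f_2$. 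Since $f_1\ne f_2$ are disjoint open faces these arcs are internally disjoint, $\gamma:=\gamma_1\cup\gamma_2$ is a simple closed curve, and $\gamma$ meets $G_0$ only at $a$ and $b$ (the interiors of the $\gamma_i$ lie inside faces). The arc $\gamma_1$ splits the closed disk bounded by $C_1$ into two sub-disks whose boundary arcs on $C_1$ are exactly the two $(a,b)$-subarcs $A$ and $A'$ of $C_1$; these sub-disks lie on opposite sides of $\gamma$, so $A\setminus\{a,b\}$ and $A'\setminus\{a,b\}$ (each connected and disjoint from $\gamma$) lie in the two different regions of the sphere cut off by $\gamma$. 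Because $a,b$ are non-consecutive on $C_1$, each of $A$ and $A'$ has a vertex of $G_0$ in its interior, so both regions contain a vertex of $G_0\setminus\{a,b\}$. Since $G_0-\{a,b\}$ avoids $\gamma$, it is split by the two regions and no edge of $G_0-\{a,b\}$ can join them; hence $\{a,b\}$ is a $2$-cut, contradicting the $3$-connectedness of $G_0$.

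The routine parts are the reduction to $G_0$, the use of Tutte's theorem, and the combinatorial selection of the non-consecutive pair $a,b$. The step I expect to require the most care is the topological bookkeeping in the last paragraph: verifying that $\gamma_1$ genuinely separates the face-disk of $f_1$ into two pieces whose $C_1$-boundaries are the two arcs $A$ and $A'$, so that $A$ and $A'$ are forced onto opposite sides of $\gamma$ and thereby deliver vertices of $G_0$ on both sides of the putative $2$-cut.
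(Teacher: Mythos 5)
The paper states this lemma as a well-known fact and gives no proof of its own, so there is nothing to compare against; judged on its own, your argument is correct. It is the standard proof: reduce to the underlying $3$-connected graph, invoke Tutte's theorem that face boundaries are cycles, and show that two faces whose boundaries meet in anything other than a vertex or an edge yield two common vertices $a,b$ through which a Jordan curve $\gamma_1\cup\gamma_2$ (one arc per face) can be drawn meeting the graph only at $a$ and $b$, exhibiting $\{a,b\}$ as a $2$-cut. Your case analysis for selecting a non-consecutive pair $a,b$ and your handling of the topological separation step are both sound.
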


We say that a graph $G \in \Gcxy$ is \df{essentially 3-connected} if $G\+$ is 3-connected.
The following lemma and its proof are adapted from~\cite{robertson-2001}.

\begin{lemma}
  \label{lm-ext}
  Suppose that\/ $G \in \S_1$ has a base homeomorphic to a graph $H \in \B^*$.
  Then there exists a homeomorphic embedding $\hem: H \homeo G$, mapping the terminals of $H$ to the terminals of $G$, such that one of the following holds:
  \begin{enumerate}[label=\rm(W\arabic*)]
  \setlength{\itemindent}{4mm}
  \item
    There exists an $\hem$-jump.
  \item
    There exists a free $\hem$-cross.
  \item
    There exists an $\hem$-tripod or a weak $\hem$-tripod.
  \item
    There exists an $\hem$-triad.
  \end{enumerate}
\end{lemma}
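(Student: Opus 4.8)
The plan is to start with an arbitrary homeomorphic embedding $\hem_0 : H \homeo G$ provided by the hypothesis (such an embedding exists and maps terminals to terminals since $H$ is a base), and then "clean it up" using Lemma~\ref{lm-no-local-bridges-3con} so that there are no local $\hem$-bridges. The graphs in $\B^*$ are $3$-connected and planar, but they are not $3$-connected as graphs with terminals in the $\Gxy$ sense unless we add the edge $xy$; however, since $G \in \S_1$ is a cascade, Lemma~\ref{lm-3-connected} tells us that every small cut of $G$ has all nontrivial bridges containing a terminal, so in particular $G$ has no small vertex-cut separating off a chunk with no terminal. The first concern is whether rerouting branches through local bridges could move a terminal off a branch of $\hem(H)$; this is precisely where Lemmas~\ref{lm-cascade-kur}, \ref{lm-3-connected}, and \ref{lm-k23-local} are needed. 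In particular, when $L_x \iso K_{2,3}$, Lemma~\ref{lm-k23-local} guarantees there are no local $\hem$-bridges attached only to the branch $P$ of $L_x$ containing the interior of $L_x$, so the rerouting of Lemma~\ref{lm-no-local-bridges-3con} never needs to use such a bridge and the position of $x$ on the interior branch of $L_x$ (and symmetrically $y$ on $L_y$) is preserved. So the first step is: obtain $\hem$ with no local $\hem$-bridges, still mapping terminals to terminals.

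Next, since $G$ is nonplanar and $\hem(H)$ is a subdivision of a $3$-connected planar graph with its unique plane embedding (Tutte), consider the $\hem$-bridges of $G$. If every $\hem(H)$-bridge could be drawn inside a single $\hem$-face (respecting attachments) then $G$ would be planar, a contradiction. So some $\hem$-bridge $B$ obstructs planarity. I would now do a case analysis on the structure of $B$ relative to the $\hem$-faces, in the spirit of the classical $3$-connected planarity/extension arguments (and of \cite{robertson-2001}): either $B$ has two attachments not cofacial — giving an $\hem$-jump (W1) — or all pairs of attachments of $B$ are cofacial. In the latter case, if $B$ has at least two attachments, pick a face $C$ and attachments on it; if $B$ has attachments yielding an interlaced pair of disjoint paths on a common face, we extract a free $\hem$-cross (W2); if $B$ has exactly three attachments forming a triad with no common face, that is an $\hem$-triad (W4) or, when the three attachments are branch-vertices and $B$ meets $\hem(H)$ in one or two branches joining them, a weak $\hem$-tripod (W3); an ordinary $\hem$-tripod is the analogous situation with no branch of $\hem(H)$ absorbed into $B$. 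The content of the lemma is that these four outcomes exhaust the ways a bridge can fail to embed in a face, which is essentially Theorem~\ref{th-disk-ext} applied face-by-face combined with the observation that two bridges "conflicting" across a face boundary produce a jump or cross.

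The crossings need care: an $\hem$-cross extracted naively might have both ends of one of its paths on a single branch $\hem(e)$, or its ends spread over two branches sharing an endpoint, in which case it is not \emph{free} and can be absorbed by rerouting — this is exactly why "free" is in the definition. So the delicate bookkeeping step is: whenever a cross is not free, use the absence of local bridges together with Lemma~\ref{lm-faces} (intersecting faces share a single branch-vertex or branch) to reroute it into a jump, a free cross, or a tripod/triad configuration, or else to conclude the relevant bridge could after all be placed in a face, pushing the obstruction elsewhere. One must check this rerouting terminates (e.g. by a minimality choice on $\hem$, or on the total size of the bridges involved) and that it never disturbs the images of the terminals — again invoking Lemmas~\ref{lm-cascade-kur} and~\ref{lm-k23-local} to rule out the bad interactions with $L_x$ and $L_y$.

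I expect the main obstacle to be precisely this last point: ensuring that the sequence of rerouting moves (first to kill local bridges, then to make a nonfree cross free or convert it to another outcome) can always be carried out while keeping $\hem$ a terminal-preserving embedding of a graph in $\B^*$ — the combinatorial case analysis of bridge/face incidences is standard, but the interplay with the two distinguished $xy$-K-graphs, for which we only have the structural inputs from Section~\ref{sc-ext}, is where the real work lies.
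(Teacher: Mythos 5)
Your overall strategy is the paper's: first use Lemmas~\ref{lm-no-local-bridges-3con} and~\ref{lm-k23-local} to obtain a terminal-preserving $\hem$ with no local bridges, then classify the $\hem$-bridges by their face incidences and invoke Theorem~\ref{th-disk-ext} inside a face. But there are two concrete gaps. The first is the passage from ``every pair (and every triple) of attachments of $B$ is cofacial'' to ``all attachments of $B$ lie on a single $\hem$-face.'' This implication is false in general, and your proposal silently assumes it: a bridge with four or more attachments could have every pair cofacial and every triple cofacial without any single face containing all of them, and such a bridge produces none of (W1)--(W4) as you have set things up. The paper closes this by an induction on $|S|$: if $S_0=\{v_1,\dots,v_{k+1}\}$ has every $k$-subset on a face $F_i$ but no common face, the $F_i$ are pairwise distinct, each $v_i$ lies on at least three of them and is therefore a branch vertex, Lemma~\ref{lm-faces} then yields an $\hem$-branch $P_{ij}$ through each pair, and these form a subdivision of $K_{k+1}$ in a planar graph, forcing $k=3$; one then needs the specific fact that no graph in $\B^*$ contains a $K_4$-subdivision with each triple of its branch vertices cofacial. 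This last step is a property of the five graphs in $\B^*$ and must be checked; nothing in your argument substitutes for it.

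The second gap is the treatment of non-free crosses, which you correctly flag as the crux but leave as ``reroute it into a jump, a free cross, or a tripod/triad configuration, or else conclude the bridge fits in a face, pushing the obstruction elsewhere.'' As stated this has no termination guarantee and no reason to succeed. The paper's resolution is a single minimality choice: among all $\hem$-crosses in the offending face, pick $P_1,P_2$ minimizing the number of pairs of ends lying on a single $\hem$-branch; if some pair $u_1,v_1$ still shares a branch $Q_1$, non-locality of the bridge gives a path $P_3$ off $Q_1$, and the case analysis (does $P_3$ meet $P_2$ or not, do $u_2,v_2$ also share a branch) either contradicts minimality or produces precisely a \emph{weak} $\hem$-tripod; similarly, a free-looking cross whose four ends lie on two branches sharing a branch vertex collapses to a weak tripod. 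This is the entire reason the notion of weak tripod appears in (W3), and your sketch does not recover it. Both gaps are fixable along these lines, but as written the proof is incomplete at exactly the two places where the statement's hypotheses (the structure of $\B^*$ and the ``weak tripod'' outcome) are actually used.
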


\begin{proof}
  Since $H$ has three internally disjoint paths joining the two terminals and $\hem$ maps the terminals of $H$ to $x$ and $y$,
  Lemma~\ref{lm-3-connected} gives that $G$ is essentially 3-connected.
  By Lemmas~\ref{lm-no-local-bridges-3con} and~\ref{lm-k23-local}, there exists a homeomorphic embedding $\hem$ from $H$ into $G$ such that there are no local $\hem$-bridges
  and terminals are mapped onto terminals by $\hem$.
  Suppose that none of (W1)--(W4) holds for $\hem$.
  Let $B$ be an $\hem$-bridge and $S$ the set of attachments of $B$.
  By excluding (W1), any two elements of $S$ lie on the same $\hem$-face.
  Not having (W4), each triple in $S$ must lie on the same $\hem$-face.
  We claim that all vertices in $S$ are contained in one of the faces.
  To see this, we will use induction.
  Let $k \ge 3$ and let us assume that for each subset $S'$ of $S$ of size $k$, there
  exists an $\hem$-face $F$ such that $S'$ lie on $F$.
  We shall prove that the same holds for each subset of $S$ of size $k+1$.
  Suppose for a contradiction that $S_0 = \{v_1, \ldots, v_{k+1}\}$ is a subset of $S$ of size $k+1$ such that there is no $\hem$-face
  that contains $S_0$. For $i = 1, \ldots, k+1$, let $F_i$ be the $\hem$-face that contains $S_0 \sm \{v_i\}$.
  Thus $F_i$ are pairwise distinct. In particular, each vertex $v_i$ belongs to $k\ge3$ distinct faces in $\{F_1,\dots,F_{k+1}\} \setminus \{F_i\}$ and thus $v_i$ is a branch vertex of $\hem(H)$.
  Since $v_1$ and $v_2$ belong to both $F_3$ and $F_4$, Lemma~\ref{lm-faces} gives that there is an $\hem$-branch $P_{12}$ that contains $v_1$ and $v_2$.
  Similarly, there is an $\hem$-branch $P_{ij}$ for each pair $i,j = 1,\ldots, k+1$.
  The branch vertices $v_i$ and the paths $P_{ij}$ form a subdivision of $K_{k+1}$.
  This implies that $k = 3$. However, for graphs in $\B^*$, no subgraph isomorphic to $K_4$ has each triple of its vertices on the same face. With this contradiction we conclude that there exists an $\hem$-face that contains $S$.

\begin{figure}[htb]
  \centering
  \includegraphics[width=0.5\textwidth]{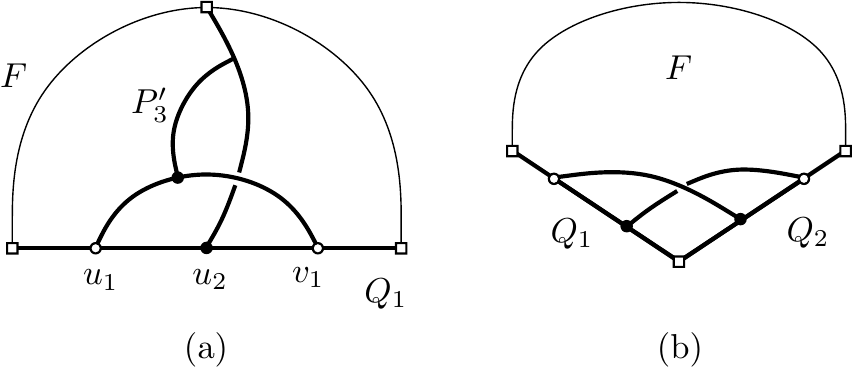}
  \caption{Weak tripods from the proof of Lemma \ref{lm-ext}.}
  \label{fg-weak-tripod}
\end{figure}

  Since there are no local $\hem$-bridges, for each $\hem$-bridge $B$, there exists a unique $\hem$-face $F_B$ such that $F_B$ contains all attachments of $B$.
  For an $\hem$-face $F$, let $G_F$ be the union of all $\hem$-bridges whose attachments are contained in $F$.
  Since $G$ is nonplanar, there exists an $\hem$-face $F$ such that $G_F$ does not embed inside $F$.
  By excluding (W3) and by Theorem~\ref{th-disk-ext}, there is an $\hem$-cross $P_1, P_2$ in $F$.
  Let $u_i, v_i$ be the ends of $P_i$, $i = 1,2$.
  Pick $P_1$ and $P_2$ so that number of pairs in $\{u_1, v_1, u_2, v_2\}$ that lie on a single $\hem$-branch is minimized.
  Assume first that $u_1$ and $v_1$ lie on a single $\hem$-branch $Q_1$.
  Since the bridge containing $P_1$ is not local, there is a path $P_3$ connecting $P_1$ and
  an $\hem$-branch $Q_2$ distinct from $Q_1$.
  If also $u_2$ and $v_2$ lie on $Q_1$, then this yields a contradiction as $P_1 \cup P_2 \cup P_3$
  contains an $\hem$-cross where the ends do not lie on a single $\hem$-branch.
  Thus we may assume that the pair $u_2, v_2$ does not share a common $\hem$-branch.
  If $P_3$ is disjoint from $P_2$, then $P_1 \cup P_2 \cup P_3$ contains an $\hem$-cross where the pairs $u_1, v_1$ and $u_2, v_2$
  do not share a common $\hem$-branch. This again contradicts the choice of $P_1$ and $P_2$.
  If $P_3$ intersects $P_2$ (even if only at its endpoint), let $P_3'$ be the subpath of $P_3$ from $P_1$ to the first vertex on $P_2$, and let $P$ be the path in $Q_1$ from $u_1$ to $v_1$.
  Then $P \cup P_1 \cup P_2 \cup P_3$ forms a weak $\hem$-tripod (see Figure \ref{fg-weak-tripod}(a)).
  This gives (W3).
  Finally, we may assume by symmetry that none of the pairs $u_1, v_1$ and $u_2, v_2$ share a common $\hem$-branch.
  Then we have (W2), unless there are two $\hem$-branches $Q_1, Q_2$ that share a branch vertex and
  so that $u_1, u_2$ lie on $Q_1$ and $v_1, v_2$ lie on $Q_2$.
  This gives (W3) as $P_1 \cup P_2 \cup Q_1 \cup Q_2$ contains a weak $\hem$-tripod (see Figure \ref{fg-weak-tripod}(b)).
  We conclude that $\hem$ satisfies one of (W1)--(W4).
\end{proof}

Even a stronger version of Lemma~\ref{lm-ext} can be proved.

\begin{lemma}
\label{lm-ext-strong}
  Let $G \in \S_1$ that has a base homeomorphic to a graph $H \in \B^*$.
  Then there exists a homeomorphic embedding $\hem: H \homeo G$ such that one of the following holds:
  \begin{enumerate}[label=\rm(T\arabic*)]
  \setlength{\itemindent}{4mm}
  \item
    There exists an $\hem$-jump.
  \item
    There exists an $\hem$-cross that attaches onto branch-vertices of $\hem(H)$.
  \item
    There exists a (weak) $\hem$-tripod with trivial feet that attaches onto branch-vertices of $\hem(H)$.
  \item
    There exist branch-vertices $u_1, u_2, u_3$ of $\hem(H)$ such that no two of them lie on a common $\hem$-branch and there exists an $\hem$-triad that attaches onto $u_1$, $u_2$, and $u_3$.
  \end{enumerate}
  Moreover, $G$ is the union of $\hem(H)$ and the corresponding obstruction in {\rm (T1)--(T4)}.
\end{lemma}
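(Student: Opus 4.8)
The plan is to start from the homeomorphic embedding $\hem\colon H\homeo G$ and the obstruction furnished by Lemma~\ref{lm-ext}, and then to trim both of them to the required minimal form by repeatedly invoking criticality. Fix $\hem$ (mapping terminals to terminals) and the obstruction $O$, one of the configurations (W1)--(W4). Since a base homeomorphic to $H\in\B^*$ contains $L_x$ and $L_y$ as pre-K-graphs, $\hem(H)$ contains two disjoint $xy$-K-graphs, so $\egp(\hem(H))\ge2$ by Lemma~\ref{lm-two-k-graphs}(i); and since $\hem(H)$ is a subdivision of a $3$-connected planar graph it has a unique plane embedding, so adjoining a jump, a free cross, a (weak) tripod, or a triad witnessing a crossing across an $\hem$-face makes the union nonplanar. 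Hence $\hem(H)\cup O$ is a nonplanar subgraph of $G$ with $\egp\ge2$; if some edge of $G$ lay outside it, deleting that edge would leave a graph that is still nonplanar and still has $\egp\ge2$, contradicting (C1). Therefore $G=\hem(H)\cup O$, which in particular yields the ``moreover'' clause once $O$ is in its final form.

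Next I would trivialize $O$. Property (C1) forces every non-terminal vertex of $G$ to have degree at least $3$: a non-terminal of degree $2$ could be suppressed by an edge-contraction changing neither $\eg$ nor $\egp$, violating (C1). An internal vertex of a foot of a tripod or triad, or an internal vertex of a jump- or cross-path, lies outside $\hem(H)$ (hence is a non-terminal, as $\hem$ maps terminals to terminals) and has degree exactly $2$ in $G=\hem(H)\cup O$; this is impossible. So the feet of (weak) tripods and triads are trivial and every jump- and cross-path is a single edge.

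It remains to move the attachments of $O$ onto branch vertices of $\hem(H)$, which upgrades (W2)--(W4) to (T2)--(T4). Suppose $O$ has an attachment at an internal vertex $w$ of an $\hem$-branch $\hem(e)$; then $w$ is a non-terminal of degree $2$ in $\hem(H)$ and degree $3$ in $G$, so its two branch-neighbours on $\hem(e)$ are well defined. Because $O$ has at most three attachments and, by the definitions of jumps, free crosses, tripods and triads, at most two of them can lie on the single branch $\hem(e)$, at least one branch-neighbour $w'$ of $w$ is not an attachment of $O$; I contract $ww'$. The contracted edge is a subdivision edge of $\hem(H)$; if it lies on a branch of $L_x$ or $L_y$ it is a subdivision edge there too (as $\hem(L_x),\hem(L_y)$ have minimum degree $2$), and if it lies on neither then contracting it leaves $\hem(L_x)$ and $\hem(L_y)$ untouched as disjoint pre-K-graphs. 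In every case the resulting proper minor of $G$ still contains two disjoint $xy$-K-graphs, so $\egp\ge2$, and the obstruction survives with its attachment merely slid along $\hem(e)$, so the minor is nonplanar --- contradicting (C1). Hence every attachment of $O$ is a branch vertex of $\hem(H)$. Finally, for (T4) one checks that if two attachments of the triad lay on a common $\hem$-branch, then that branch together with the triad would form a jump between distinct branches or a (weak) tripod, putting us in case (T1) or (T3); the analogous non-degeneracy needed for (T2) and (T3) follows from the freeness of the cross and, as in the proof of Lemma~\ref{lm-ext} via Theorem~\ref{th-disk-ext}, from the fact that the relevant $\hem$-face genuinely fails to accommodate the bridges drawn inside it.

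The step I expect to be the real obstacle is this last one. The contraction/relocation argument is delicate precisely when several attachments of $O$ pile up on one $\hem$-branch, or when an attachment of $O$ coincides with a branch vertex of $L_x$ or $L_y$ that happens to be only a subdivision vertex of $H$: a careless contraction can merge two attachments of $O$ (destroying its type) or collapse a branch of $\hem(L_x)$ or $\hem(L_y)$ (destroying a K-graph). Making the edge choice safe in all configurations --- or, when no safe contraction exists, checking directly against the explicit list of the five graphs in $\B^*$, exactly as in Lemma~\ref{lm-ext}, that the obstruction already has one of the forms (T1)--(T4) --- is where the genuine case analysis lies.
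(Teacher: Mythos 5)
Your overall strategy is the same as the paper's: start from the embedding and obstruction given by Lemma~\ref{lm-ext}, and use (C1) to argue that any minor operation preserving both a pair of disjoint $xy$-K-graphs and a (W1)--(W4) obstruction is impossible, thereby forcing $G=\hem(H)\cup O$ and forcing $O$ into normal form. Your derivation of the ``moreover'' clause and the trivialization of feet and path interiors is essentially the paper's argument. The genuine gap is in the main step, and it is exactly the one you flag at the end but do not resolve: the claim that after contracting $ww'$ ``the obstruction survives with its attachment merely slid along $\hem(e)$, so the minor is nonplanar'' is false in the degenerate cases. When the attachment slides onto a branch vertex, a \emph{free} cross can cease to be free (one of its paths then has both ends on a single $\hem$-branch, so it can be re-embedded in the adjacent face and the minor may be planar), and an $\hem$-triad can cease to be a triad (its three attachments may come to lie on a common $\hem$-face). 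In these cases the contraction does \emph{not} contradict (C1), so no safe edge choice exists and one must extract structural information instead. This is where most of the paper's proof lives: for the cross it deduces that the failure of both contractions forces $Q,Q_1,Q_2$ to form a subdivided triangle carrying the cross, and then a third contraction $e_3$ on $Q_1$ restores (W2) or (W3); for the triad it deduces from the planarity of both contractions that $\{u_2,u_3\}$ is a $2$-cut separating $L_x$ from $L_y$, contradicting the separation hypotheses; and adjacency degeneracies are excluded via Observation~\ref{obs-triangle}.

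A second, smaller gap: you assert that an attachment at an internal vertex of an $\hem$-branch is a non-terminal. That is not automatic --- a terminal of $H\in\B^*$ can itself be a degree-$2$ vertex of $H$ (e.g.\ $x$ on an open branch of $L_x\cong K_{2,3}$), so $\hem(x)$ is an internal vertex of an $\hem$-branch. The paper must separately rule out terminals as attachments: for (W2)/(W3) it first shows the $\hem$-face carrying the obstruction contains no terminal (otherwise contracting a suitable edge of $\hem(H)$ produces two disjoint K-graphs plus a third, violating (C1) via Lemma~\ref{lm-two-k-graphs}(i)), and for (W4) it uses Lemma~\ref{lm-cascade-kur} when $u_1=x$. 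Without these two ingredients your argument does not close, so as written the proof is incomplete rather than wrong in direction.
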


\begin{proof}
  Lemma~\ref{lm-ext} yields a homeomorphic embedding $\hem: H \homeo G$ such that
  one of (W1)--(W4) holds.
  Let $\mu \in \M(G)$. If $\mu G$ admits a homeomorphic embedding $\hem' : H \homeo \mu G$
  that satisfies one of (W1)--(W4), then $\egp(\mu G) \ge \egp(H) = 2$ and $\mu G$ is nonplanar.
  This contradicts the property (C1) of cascades.
  Let us describe sufficient conditions that yield this contradiction.
  Clearly, if $\mu$ is deletion of an edge $e \not\in \hem(H)$ that also does not appear in the obstruction given by (W1)--(W4),
  then $\hem, G - e$ contradicts (C1). This yields the last statement in the lemma.
  If $\mu$ is a contraction of an edge $e \in \hem(H)$ and one of its ends is not a terminal or a branch-vertex of $\hem(H)$
  and, furthermore, the ends of $e$ are not attachments of the obstruction given by (W1)--(W4), then there is a homeomorphic
  embedding $\hem': H \homeo \mu G$ that satisfies one of (W1)--(W4), a contradiction.

  Suppose that none of (T1)--(T4) holds. Thus one of (W2)--(W4) holds.
  Assume first that (W2) or (W3) holds and let $B$ be the union of $\hem$-bridges as given by (W2) or (W3).
  Let $C$ be an $\hem$-face of $\hem(H)$ that contains all attachments of $B$.
  Let us prove that $C$ contains no terminals.
  Suppose to the contrary that $C$ contains $x$.
  Thus $C \cup B$ contains a K-graph of $G$.
  Let $e \in E(\hem(H)) \sm E(L_x)$ be an edge that is incident with $L_x$ and not incident with $C$.
  By inspection of $\B^*$, the graph $G/e$ contains two disjoint K-graphs and $C \cup B$ contains
  a K-graph of $G/e$. This contradicts (C1) by Lemma~\ref{lm-two-k-graphs}(i).
  Hence we may assume that $C$ contains no terminals.

  Assume that (W2) holds.
  Since (T2) does not hold, there is a free $\hem$-cross $P_1, P_2$
  such that $P_1$ has attachment $u_1$ on an open branch $Q$ of $\hem(H)$.
  Let $u_1, v_1$ and $u_2, v_2$ be the attachments of $P_1$ and $P_2$, respectively.
  Let $e_1$ and $e_2$ be the edges of $Q$ incident with $u_1$.
  Consider the graphs $G_1 = G /e_1$ and $G_2 = G/e_2$.
  Since $Q$ is an $\hem$-branch of length at least 2, $\hem$ induces homeomorphic embeddings $\hem_1: H \homeo G_1$ and
  $\hem_2: H \homeo G_2$.
  Suppose $P_1, P_2$ is a free $\hem_1$-cross. Then $G_1$ is nonplanar and, since $G_1$ has a base $\hem_1(H)$, we have that $\egp(G_1) \ge 2$.
  This contradicts (C1). Thus $P_1, P_2$ is not a free $\hem_1$-cross.
  Similarly $P_1, P_2$ is not a free $\hem_2$-cross.
  Let $e_1 = u_1w_1$ and $e_2 = u_1w_2$.
  Since $P_1, P_2$ is a free $\hem$-cross, we may assume that $w_1 \not\in \{u_2, v_2\}$.
  Since $P_1, P_2$ is not a free $\hem_1$-cross, $P_1$ has both ends on a single $\hem_1$-branch $Q_1$.
  If $w_2 \in \{u_2, v_2\}$, then the ends of $P_1, P_2$ lie on $Q \cup Q_1$ in $G$, a contradiction.
  Thus we may assume that $w_2 \not\in \{u_2, v_2\}$ and we obtain by symmetry that $P_1$ has both ends on a single $\hem_2$-branch $Q_2$.
  We conclude that $Q, Q_1, Q_2$ is a subdivision of a triangle, $v_1$ is the common vertex of $Q_1$ and $Q_2$, $w_1$ is the common vertex of $Q$ and $Q_1$,
  $w_2$ is the common vertex of $Q$ and $Q_2$, and $u_2, v_2$ lie on $Q_1, Q_2$, respectively.
  Let $e_3$ be the edge of $Q_1$ that is incident with $u_2$ and lies between $w_1$ and $u_2$.
  Consider the graph $G_3 = G/e_3$. Clearly, $G_3$ satisfies either (W2) or (W3) and thus contradicts (C1).

  Assume that (W3) holds.
  Since (T3) does not hold, either $B$ has a nontrivial foot or $B$
  has an attachment on an open $\hem$-branch.
  Suppose first that $B$ has a nontrivial foot $P$.
  Since $P$ contains no terminals, contracting $P$ preserves (W3).
  This is a contradiction with the observation made above.
  Suppose now that $u$ is an attachment of $B$ on an open $\hem$-branch $Q_1$.
  Let $uv \in E(Q_1)$ be an edge incident with $u$.
  Since $u$ is not a terminal and $v$ is not an attachment of $B$ by Observation \ref{obs-triangle}, $G /uv$ contradicts (C1).

  Assume that (W4) holds.
  By Observation \ref{obs-triangle}, the attachments $u_1, u_2, u_3$ of $B$ are independent.
  Since (T4) does not hold, we may assume that $u_1$ lies on an open $\hem$-branch $Q$.
  If $u_1 = x$ and $L$ is the $x$-K-graph in $\hem(H)$, then $L \cup B$ contains a Kuratowski subgraph
  that is disjoint from the $y$-K-graph, a contradiction with Lemma~\ref{lm-cascade-kur}.
  Thus we may assume that $u_1$ is not a terminal.
  Let $u_1w_1, u_1w_2$ be the edges of $Q$ incident with $u_1$ and
  let $G_1 = G/u_1w_1$ and $G_2/u_1w_2$.
  Since both $G_1$ and $G_2$ admit a homeomorphic embedding of $H$, they are both planar.
  Thus there is an $\hem$-face that contains the vertices $w_1, u_2, u_3$ and an $\hem$-face
  that contains the vertices $w_2, u_2, u_3$. It is not hard to see that $u_2, u_3$ is a 2-vertex-cut in $G$ that blocks $L_x$ and $L_y$,
  a contradiction.
\end{proof}

The list of minimal graphs satisfying the conditions of Lemma~\ref{lm-ext-strong} was generated by computer\footnote{The programs used and the graphs generated are archived at \url{arXiv.org} along with the original manuscript of this paper.} and checked for which of them are in $\S_1$.
The outcome of this computation is the following theorem.
A proof by hand would be possible but would involve detailed case analysis that can be as error-prone
as a computer program.

\begin{figure}[htb]
  \centering
  \includegraphics[width=0.58\textwidth]{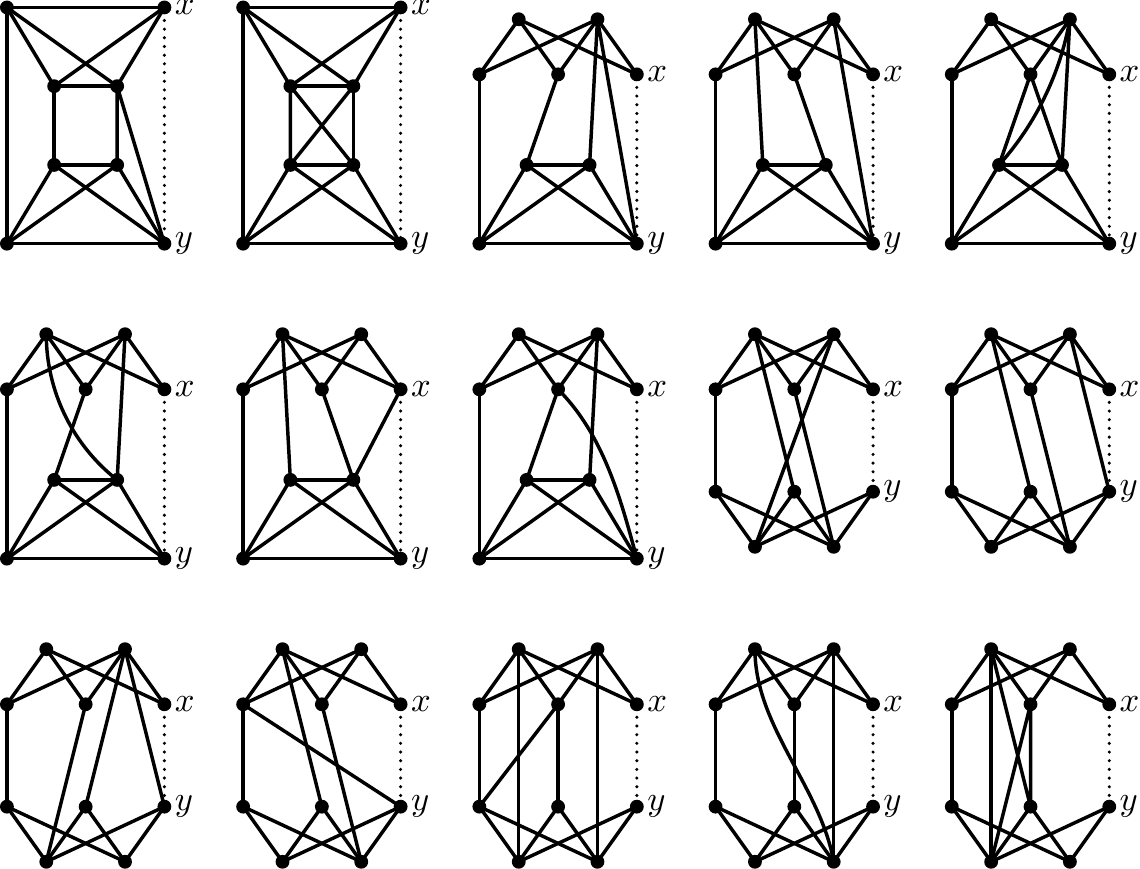}
  \caption{Cascades in $\S_1$ whose $xy$-K-graphs are $k$-separated for $k \ge 3$.}
  \label{fg-cascades-3}
\end{figure}

\begin{theorem}
  The class $\S_1$ consists of $21$ graphs which are depicted in Figs.~\ref{fg-cascades-1}, \ref{fg-cascades-2}, and~\ref{fg-cascades-3}.
\end{theorem}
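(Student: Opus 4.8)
The theorem is obtained by assembling the lemmas of Sections~\ref{sc-k-graphs}--\ref{sc-ext} with a final finite (computer-assisted) verification; I describe the plan.

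\medskip
\noindent\emph{Step 1: $\S_1$ is an antichain under the minor relation.}
Suppose $Q \in \S_1$ is a proper minor of some $G \in \S_1$. Then there is $\mu \in \M(G)$ with $Q$ a minor of $\mu G$. Since $\eg$ and $\egp$ are minor-monotone and $\eg(Q)=1$, $\egp(Q)=2$, we get $\eg(\mu G)\ge 1$ and $\egp(\mu G)\ge 2$, contradicting property (C1). Hence no graph of $\S_1$ is a proper minor of another. It therefore suffices to prove: (a) every $G \in \S_1$ has one of the $21$ graphs of Figs.~\ref{fg-cascades-1}, \ref{fg-cascades-2}, and~\ref{fg-cascades-3} as a minor; and (b) each of these $21$ graphs lies in $\S_1$ and they are pairwise non-isomorphic. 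Indeed, given (a) and (b), if $G \in \S_1$ then (a) yields a minor $Q$ of $G$ among the $21$ graphs, (b) gives $Q \in \S_1$, and Step~1 forces $G = Q$; together with (b) this shows that $\S_1$ consists of exactly these $21$ graphs.

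\medskip
\noindent\emph{Step 2: proof of (a).}
Let $G \in \S_1$. By Lemma~\ref{lm-disjoint-xy-k-graphs}, $G$ contains disjoint $xy$-K-graphs; fix minimal ones $L_x$, $L_y$ as in Section~\ref{sc-s1}, and let $k$ be the integer for which $L_x$ and $L_y$ are $k$-separated. By Lemma~\ref{lm-sep-0}, $k \ge 1$. If $k=1$, Lemma~\ref{lm-sep-1} gives a minor of $G$ among the graphs of Fig.~\ref{fg-cascades-1}; if $k=2$, Lemma~\ref{lm-2-sep} gives one among the graphs of Fig.~\ref{fg-cascades-2}. If $k \ge 3$, then Lemma~\ref{lm-3con-bases} (applied with $H = G$) shows that $G$ has a graph of $\B$ as a planar minor; realizing such a minor topologically and splitting the degree-$4$ vertices as in the definition of $\B^*$, we obtain that $G$ has a base homeomorphic to some $H \in \B^*$. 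Lemma~\ref{lm-ext-strong} then yields a homeomorphic embedding $\hem \colon H \homeo G$ for which one of (T1)--(T4) holds and, moreover, $G$ is the union of $\hem(H)$ with the corresponding obstruction. Since $\B^*$ is finite (see Figs.~\ref{fg-bases} and~\ref{fg-b-star}) and each of the obstructions (T1)--(T4) adds to $\hem(H)$ a subgraph of bounded size whose attachments are branch-vertices, or lie on branches, of $\hem(H)$ in only finitely many combinatorially inequivalent ways, there are finitely many possibilities for $G$. Generating all of them and discarding those not in $\S_1$ (a finite test per graph) leaves precisely the graphs of Fig.~\ref{fg-cascades-3}. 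This proves (a).

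\medskip
\noindent\emph{Step 3: proof of (b).}
Each of the $21$ graphs $Q$ in the three figures is a fixed small graph. For each of them one verifies directly (by computer) that $\eg(Q)=1$ and $\egp(Q)=2$, that $\M(Q) = \dc1\eg \cup \dc1\egp$ (property (C1)), and that $Q$ admits a minor operation not decreasing $\eg$ (property (C2)) and one not decreasing $\egp$ (property (C3)); hence $Q \in \S_1$. A further routine comparison shows that the $21$ graphs are pairwise non-isomorphic. Combined with Steps~1 and~2, this completes the proof.

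\medskip
\noindent\emph{On the main difficulty.}
All the structural work is done in the lemmas, so what remains is bookkeeping plus a finite computation. The delicate point is the case $k \ge 3$: one must be certain that $\B^*$ has been computed correctly (all planar vertex-splits of $\B$ that remain deletion-minimal) and that the ``moreover'' clause of Lemma~\ref{lm-ext-strong} genuinely bounds $|V(G)|$, so that the resulting search space is finite; after that the computation reduces to generating the candidates and computing a handful of Euler genera. A proof of this case by hand is possible but would require a long and error-prone case analysis, which is why it is carried out by computer.
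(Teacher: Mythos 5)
Your proposal is correct and follows essentially the same route as the paper: reduce via Lemmas~\ref{lm-disjoint-xy-k-graphs}, \ref{lm-sep-0}--\ref{lm-2-sep}, \ref{lm-3con-bases}, and \ref{lm-ext-strong} to a finite list of candidates, then verify (C1)--(C3) by computer. Your Step~1 (that $\S_1$ is a minor-antichain by (C1), so ``has one of the 21 graphs as a minor'' upgrades to ``equals one of them'') is a point the paper leaves implicit, and making it explicit is a welcome clarification rather than a deviation.
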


\begin{proof}
  Let us give detailed overview of the proof and indicate which parts of the proof rely on computer verification.
  Let $\C$ be the set of $21$ graphs depicted in Figures \ref{fg-cascades-1}, \ref{fg-cascades-2}, and~\ref{fg-cascades-3}.
  To show that $\C \ss \S_1$, we have to prove that each graph $G \in \C$ satisfies (C1)--(C3) and $\egp(G) = 2$.
  We are not aware of a faster method than computing $\eg(\mu G)$ and $\egp(\mu G)$ for all minor-operations $\mu \in M(G)$
  and then checking that (C1)--(C3) were satisfied. This was verified by computer for every graph in $\C$.

  In order to show that $\S_1 \ss \C$, let us consider a graph $G \in \S_1$.
  By Lemma~\ref{lm-disjoint-xy-k-graphs}, $G$ contains disjoint $xy$-K-graphs that are $k$-separated for some $k \ge 0$.
  If $k \le 2$, then Lemmas~\ref{lm-sep-0},~\ref{lm-sep-1}, and~\ref{lm-2-sep} give that $G \in \C$.
  If $k \ge 3$, then Lemma~\ref{lm-3con-bases} asserts that $G$ has a base that is homeomorphic to a graph $H \in \B^*$.
  By Lemma~\ref{lm-ext-strong}, there is a homeomorphic embedding of $H$ into $G$ such that one of (T1)--(T4) holds.
  By computer, we have constructed all those graphs (which yields several hundred) and
  verified that all of these graphs that satisfy (C1)--(C3) belong to~$\C$.
\end{proof}

\bibliographystyle{abbrv}
\bibliography{bibliography}

\end{document}